\documentclass[11pt,a4paper]{article}
\usepackage{amsmath,amssymb,amsfonts,amsthm}
\usepackage{mathtools, thmtools}
\usepackage{graphicx}
\usepackage{tikz}
\usetikzlibrary{babel,cd,fit}
\usepackage{algorithm}
\usepackage{algpseudocode}
\usepackage{enumitem}
\usepackage{nicematrix}
\setlist{nosep}
\setlist[enumerate]{label=(\arabic*)}
\usepackage{etoolbox}
\usepackage{url}
\usepackage{multicol}
\usepackage{extpfeil}
\usepackage{thm-restate}
\usepackage{hyperref}
\numberwithin{equation}{section}

\newtheorem{theorem}{Theorem}[section]
\newtheorem{corollary}[theorem]{Corollary}
\newtheorem{proposition}[theorem]{Proposition}
\newtheorem{lemma}[theorem]{Lemma}

\theoremstyle{definition}
\newtheorem{definition}[theorem]{Definition}
\newtheorem{example}[theorem]{Example}
\newtheorem{construction}[theorem]{Construction}

\theoremstyle{remark}
\newtheorem{remark}[theorem]{Remark}
\newtheorem*{example*}{Example}

\definecolor{darkdgmcolor}{rgb}{0.0, 0.0, 0.8}
\definecolor{darkgreen}{rgb}{0.0, 0.8, 0.0}
\definecolor{purple}{RGB}{153,50,204}
\definecolor{dgmcolor}{RGB}{255,20,147}

\newcommand{\Z}{\mathbb{Z}}

\DeclareMathOperator{\lm}{lm}
\DeclareMathOperator{\lc}{lc}
\DeclareMathOperator{\lt}{lt}

\DeclareMathOperator{\im}{im}
\DeclareMathOperator{\Hom}{Hom}
\DeclareMathOperator{\Syz}{Syz}
\DeclareMathOperator{\lcm}{lcm}
\newcommand\Mon{\operatorname{\mathbf{Mon}}}
\DeclareMathOperator\supp{supp}
\DeclareMathOperator\mdeg{mdeg}
\newcommand\rank{\operatorname{rank}}
\newcommand{\rk}{\rank}
\newcommand\mon{(\mathrm{mon})}
\DeclareMathOperator\NF{NF}

\usepackage{etoolbox}

\title{Relative Gr\"obner bases of modules and applications in persistence theory}

\author{
Fritz Grimpen\\
\small
Institute for Algebra, Geometry, Topology and their Applications (ALTA),\\
\small Department of Mathematics, University of Bremen, Germany\\
\small\texttt{grimpen@uni-bremen.de}
\and
Matthias Orth\\
\small Department of Mathematics,\\
\small KU Leuven, Belgium\\
\small \texttt{matthias.orth@kuleuven.be}
\and
Anastasios Stefanou*\\
\small Institute for Algebra, Geometry, Topology and their Applications (ALTA),\\
\small Department of Mathematics, University of Bremen, Germany\\
\small \texttt{stefanou@uni-bremen.de}
}

\date{}
\begin{document}

\maketitle

\enlargethispage{2\baselineskip}

{\let\thefootnote\relax
\footnotetext{*Corresponding author}
\footnotetext{Keywords: Gr\"obner bases, graded modules, persistence,  presentations, resolutions}
\footnotetext{2020 Mathematics Subject Classification: Primary: 13P10, 55N31; Secondary: 68W30}
}

\begin{abstract}
    Finitely generated modules over the polynomial ring in $n$ indeterminates are isomorphic to quotients of finite rank free modules.
    We
     introduce
     a theory of relative Gröbner bases for those quotients of free modules and, equivalently, for pairs of submodules; we prove corresponding Buchberger- and Schreyer-type theorems.
    As applications of this theory, we consider three problems in persistence theory, which can be solved by relative Gröbner bases.
    First, we show that the relative Schreyer's theorem can be used to compute free presentations of complexes of finitely generated torsion-free modules.
    In contrast to previous approaches, this allows computation of free presentations for multicritical persistent homology directly at the chain module level without additional topological constructions.
    Second, any finitely generated Artinian module embeds in an Artinian injective hull, giving rise to a flat-injective presentation.
    We represent the embedding of the module in this injective hull by a quotient of a free module and apply the relative Schreyer's theorem to construct an algorithm for the computation of a free presentation from a flat-injective presentation.
    Third, we investigate how free presentations, and more generally free resolutions, obtained by the two preceding applications can be minimized by standard reduction techniques.

\end{abstract}

\tableofcontents

\section{Introduction}
In the theory of modules over the polynomial ring $R\coloneqq\Bbbk[X_1,\dotsc,X_n]$,
one cannot, in general, assign a  complete invariant to each
isomorphism class of $R$-modules \cite{carlsson2009theory}.
For instance, every $\Z^n$-graded $R$-module admits a unique minimal free resolution
only up to noncanonical isomorphism \cite{Miller2025}.
Thus, there is no functorial or deterministic way to choose a distinguished
resolution across the entire module's isomorphism class.
Nevertheless, canonical constructions become available once a module is embedded
into a fixed ambient module and a monomial order on that ambient module is chosen.

Buchberger showed that if $M \subseteq R^d$ is a submodule of a free module and a monomial
order on $R^d$ is fixed, then there exists a unique
\emph{reduced Gröbner basis} of $M$, independent of the choice of input
generators of $M$ \cite{Buchberger2006}.
Schreyer showed that Gr\"obner bases allow one to compute kernels of morphisms between free modules, and in particular to determine explicitly a free presentation  of $M $ from any Gr\"obner basis $G$ of $M\subseteq R^d$ \cite{Schreyer1980MastersThesis}.

\paragraph{Relative Gr\"obner bases.}
Hashemi, Orth, and Seiler developed the notion of \emph{relative Gröbner bases} for quotient
modules such as $J/I \subseteq R/I$, and more generally for submodules
$V/I^d\subseteq (R/I)^d$ \cite{HashemiOrthSeiler2021}.
For a fixed monomial order, they proved the existence of a unique reduced
relative Gröbner basis that is independent of the chosen generators of $I\subseteq R$ and $V\subseteq R^d$.
In particular, they showed a relative version of Schreyer's theorem for computing a free presentation of $J/I$ from a given relative Gr\"obner basis of $J/I$.

In this work, we extend the theory of Gr\"obner bases to the general setting of submodules $V/U$ of $R^d/U$ and extend Schreyer's theorem to compute a free presentation of $V/U$.
Before \cite{HashemiOrthSeiler2021}, other general frameworks of Gr\"obner bases over very broad classes of rings were developed by
Spear, Zacharias, and Mora \cite{mora2015zacharias_ISSAC,Mora2020Zacharias,Zacharias1978} and subsequent algorithmic
refinements of Schreyer's theorem were introduced in \cite{EroecalEtAl2016Syzygies,LaScalaStillman1998}.~Although these theories and algorithms provide
powerful tools for canonical forms and syzygy computations, respectively, they do not explicitly define notions of Gröbner bases or  presentations for submodules $V/U$ of $R^d/U$.~To the best of our knowledge, the constructions
and results established here are novel.

\paragraph{Free presentations and free resolutions of multigraded modules}
\emph{Persistence theory} in topological data analysis (TDA) has led to extensive study of \emph{$n$-graded modules} (i.e., $\mathbb{Z}^n$-graded $R$-modules) \cite{carlsson2009theory}, particularly in connection with minimal free presentations and resolutions of homology modules of the form
\(
V/U:=\ker g \,/\, \operatorname{im} f
\)
arising from chain complexes of torsion-free $R$-modules
\(
L \xrightarrow{f} M \xrightarrow{g} N
\)
\cite{lesnick2022computing,LenzenDissertation}.~A central computational challenge in multiparameter TDA is the efficient determination of a minimal presentation and a minimal free resolution of such homology modules \cite[page~5]{botnan2023introduction}. Significant progress has been achieved in special cases, most notably for bigraded modules \cite{lesnick2022computing,bauerSoCG2023,ChacholskiScolamieroVaccarino2017,kerberRolle} and for $1$-critical complexes \cite{carlsson2010computing}.

A commonly used strategy proceeds in two steps. First, one replaces the given complex of torsion-free modules by a quasi-isomorphic\footnote{Two chain complexes are quasi-isomorphic if their homology modules are isomorphic.} complex of free modules, for instance via the \emph{total complex} construction \cite{weibel1994introduction,LenzenDissertation} or similar constructions \cite{ChacholskiScolamieroVaccarino2017}.
Second, one computes a free resolution of the homology $V/U$ by resolving
$V := \ker g$ via Schreyer's theorem and $U := \operatorname{im} f$ using
standard methods. Schreyer's theorem is applicable for $V$ since $U \subset V \subset M$ and  $M$ is free
after the quasi-isomorphic replacement.~One then lifts the inclusion
$U \hookrightarrow V$ to a homomorphism between their free resolutions. Any such lift yields a free resolution of $V/U$.

While possibly effective for computing homological invariants, this procedure is inherently non-canonical. The first step is well-defined only up to quasi-isomorphism in the derived category, and the second up to chain homotopy. Consequently, both the resulting complex of free modules from step one and the resolution of it from step two depend on intermediate choices and are not uniquely determined by the original torsion-free complex.

As a result, deterministic algorithms for computing minimal free presentations and  resolutions of homology modules  of torsion-free $n$-graded complexes defined at the chain module level remain largely unavailable.

Beyond homology modules, the need for deterministic algorithms for computing free presentations and resolutions of $n$-graded modules is even more pronounced. New constructions of bigraded modules in TDA, such as \emph{filtered chain complexes} \cite{chacholski2023decomposing,ChacholskiGiuntiLandi2021,de2011dualities,meehan2019structural,usher2016persistent,MemoliZhou2023}, \emph{persistent cup-modules} \cite{memoli2024persistent,DeyRathod2024,yarmola2010persistence}, and \emph{persistence Steenrod modules} \cite{lupo2022persistence}, encode additional structure—notably the cup product—which is invariant under cochain-homotopy, but not under a general quasi-isomorphism.~Consequently, even at the cochain level, derived-category methods, such as the total complex replacement\footnote{The total complex of a (co)chain complex of torsion-free modules is quasi-isomorphic to it under mild assumptions, but not necessary (co)chain-homotopic to it.}, become insufficient for the analysis of those bigraded modules.

To sum up, all of these considerations motivate the development of novel Gröbner-theoretic techniques for the deterministic computation of resolutions of $n$-graded modules, in particular for modules of the form $V/U \subseteq F/U$, where $F$ is an $n$-graded free module.

 \paragraph{Flat–injective presentations of multigraded modules}
Miller introduced \emph{flat–injective presentations}
for $n$-graded modules in \cite{Miller2025}, encoding a module by generators, cogenerators, and
$\Bbbk$-linear relations among them.
Every finitely determined module admits a minimal flat–injective presentation, unique up to
noncanonical isomorphism. In particular, one can even algorithmically compute injective hulls of $n$-graded modules as shown in \cite{HelmMiller2005}.

Lenzen subsequently developed an algorithm to compute a flat-injective
presentation of homology modules, by constructing from a given chain complex a certain quasi-inverse chain map from the chain complex to the $n$-shift of its Nakayama dual \cite{lenzen2024computing}.

Building on Miller’s theory \cite{Miller2025,HelmMiller2005}, Grimpen and Stefanou \cite{GrimpenStefanou2025} reinterpreted both the
construction and the minimization of flat–injective presentations directly at the module
level rather than at the derived level.
They constructed presentation matrices from the multiplication maps,
characterized minimality algebraically, and proposed a non-deterministic reduction algorithm that,
for generators $f_1,\dotsc,f_s$ of an $R$-submoule $M$ of an Artinian injective $R$-module $E$, decides whether each $f_i$ belongs to the
$R$-span of the others by solving linear systems over~$\Bbbk$.

Actually, the starting point of the present work was the observation that any finitely generated and finitely supported $n$-graded submodule of an injective module embeds into a quotient of a
free module $F/U$, as a submodule $M=V/U\subseteq F/U$.
Thus, membership and minimization problems reduce to computations in such
quotients.
This led us to extend the relative Gröbner basis theory to arbitrary quotient modules
$R^d/U$ (and hence to multigraded quotients $F/U$), allowing for deterministic membership
tests, generator minimization, and the computation of presentations and resolutions.

Rather than recomputing Gröbner bases after each reduction step, we compute a single
relative Gröbner basis and reuse it throughout the process.
This yields efficient deterministic algorithms whose output is uniquely determined by the
chosen input generators and monomial order.

\paragraph{Our Contribution}
We develop a relative Gröbner basis theory for general $R$-modules, and study applications in persistence theory.

In Section \ref{sec:preliminaries} we recall the preliminaries which we need.~In Section \ref{sect:rel-gb}, we generalize the framework of \cite{HashemiOrthSeiler2021} from submodules $V/I^d$ of
$R^d/I^d$ to general quotients $V/U\subseteq R^d/U$.
Since every finitely generated module can be realized as such a quotient, this setting is
fully general, and can easily then be transported to the multigraded module setting (replacing $R^d$ with a multigraded free module $F$).
Within this framework we establish existence, uniqueness, and syzygy constructions for
reduced relative Gröbner bases, leading to deterministic free presentations and resolutions for $V/U$, once generators of $V$ and of $U$ are given.
The following results summarize the main structural statements.

\begin{restatable}[Relative Buchberger's theorem]{theoremalph}{TheoremA}%
    \label{thm:Uniqueness_And_Correspondence_Via_NF:intro}
    Let $U\subseteq V\subseteq R^d$ be submodules and  $\prec$ a fixed monomial order on $R^d$. Then $V$ has a unique reduced Gröbner basis relative to $U$.
\end{restatable}

\begin{restatable}[Relative Schreyer's theorem]{theoremalph}{TheoremB}%
    \label{thm:KernelOfPresentation:intro}
    Let $U \subseteq V \subseteq R^d$ be submodules. The reduced Gr\"{o}bner basis $H$ of $V$ relative to $U$ deterministically induces Gr\"{o}bner bases $H^{(i)}$, $i\geq 1$ of the syzygy $R$-modules $\Syz^{i}(H)\subseteq R^{t_{i}}$, where $t_i \coloneqq \lvert H^{(i-1)} \rvert$ and $H^{(0)}\coloneqq H$.
\end{restatable}
\noindent In Section \ref{sect:mgraded-pres} we study the following  applications in persistence theory.
\paragraph{1.~Computing free resolutions of homology modules}
Theorems~\ref{thm:Uniqueness_And_Correspondence_Via_NF:intro} and \ref{thm:KernelOfPresentation:intro} provide a theoretical foundation for algorithmic computations with arbitrary submodules $V/U\subset R^d/U$.~This is easier to see in the setting of $n$-graded modules, as any $n$-graded module $M$ is a submodule of an injective module $E$ which in turn can be expressed as a quotient, $F/U$, of a free $n$-graded module $F$ (see Rem.~\ref{rem:E as Rd mod U}).
By carefully applying Theorem~\ref{thm:KernelOfPresentation:intro} and certain technical arguments, we obtain the following result.

\begin{restatable}{theoremalph}{TFHomology}%
    \label{thm:homology presentation}
    Let  $L\xrightarrow{f}M\xrightarrow{g}N$ be a chain complex
    of finitely generated $n$-graded torsion-free modules.
    A presentation of the homology of the complex can be computed using the relative Schreyer's theorem.

    In particular, the unique reduced Gr\"obner basis of $V:=\operatorname{ker}g$ relative $U:=\operatorname{im}f$ determines a presentation of homology module $V/U$ that depends only on $V$ and $U$ and not on the choice of input generators of $V$ and $U$.
\end{restatable}

This allows computation of a minimal free resolution
of the multigraded homology module in a deterministic and structured manner (see Remark~\ref{rem:homology resolution}).

Moreover, Theorem~\ref{thm:homology presentation}  complements Lenzen's approach~\cite{lenzen2024computing}.~Lenzen’s method starts from a free resolution of homology and produces a free-cofree presentation of homology. Theorem~\ref{thm:homology presentation} computes a free presentation of the homology module of any complex of torsion-free modules, and by iteratively applying Schreyer's theorem, allows one to obtain a free resolution of homology deterministically from any input generators of $L$, $M$ and $N$.

\paragraph{2.~Free resolutions from free-cofree presentations}
Let $\varphi\colon F \to E$ be a free-cofree presentation.
We bring $\varphi$ to its Gr\"obner form using Algorithm 3 (see Section \ref{sect:gb-free-cofree}).
We identify $\operatorname{im}\varphi\subseteq E$ with $V/U \subseteq F/U$ (see Remark~\ref{rem:E as Rd mod U}).
By applying the monomialization and  Theorem~\ref{thm:KernelOfPresentation:intro}, this inclusion admits a free presentation.
Furthermore, we have the following result.

\begin{restatable}{theoremalph}{FlangeToFree}%
    \label{thm:free presentation of a free-cofree presentation}
    For any free-injective matrix $A$ in Gröbner form, Algorithm~\ref{alg:FreePresentation} terminates and computes a free presentation of the module $\im A$.
 \end{restatable}
Theorem~\ref{thm:free presentation of a free-cofree presentation} can be used inductively to compute a free resolution from a free-cofree presentation.~Using $n$-graded pruning (see Section \ref{sect:minimal-res}), this leads to the computation of a minimal free resolution. Theorem~\ref{thm:free presentation of a free-cofree presentation} can be used to solve the problem of reducing a set of generators of $V/U\subseteq F/U$, and provides an alternative to the reduction algorithm in \cite{GrimpenStefanou2025} (see Proposition~\ref{prop:reduced free cofree} and Corollary~\ref{cor:_reduced free cofree}).

\paragraph{3. Free-cofree presentations from  free resolutions}
Lenzen’s construction starts from a free resolution of a module $M$ and constructs a free-cofree presentation of $M$. Conversely, as a special case, our method can also take as input a free-cofree presentation $\varphi\colon F\to E$ of a module $M$, then replace $E$ with $F/U$, for proper free module $F$ and $U\subseteq F$, and produce a free resolution of $M$, making it in this sense dual to Lenzen’s approach.

\vspace{1em}
\noindent In Section \ref{sect:minimal-res} we adapt the pruning reduction technique for minimizing $n$-graded free resolutions of submodules $V/U\subseteq F/U$, where $F$ is free.

\section{Preliminaries}
\label{sec:preliminaries}
Throughout this work, let $\Bbbk$ be a field.
Every ring is regarded as commutative, and for any (commutative) ring $R$ an \emph{$R$-module} is generally understood as an abelian group with a scalar multiplication compatible with the additive structure of the underlying ring $R$.
Particularly the free module with $d \in \mathbb N$ generators is denoted by $R^d$ and is endowed with the standard basis $(\mathbf e_1, \dotsc, \mathbf e_d)$.
As a convention, we will identify any concrete (graded) free-to-free homomorphism $\partial_i\colon F_i\to F_{i-1}$ with its (graded) matrix with respect to the standard bases. Given a module $M$, we denote any free cover $F_0\to M\to 0$ by $\varepsilon\colon F_0\to M\to 0$.

\subsection{Polynomial rings and Gröbner bases}
We recall the classical definitions and results from standard Gr\"obner basis theory.

We denote, for $n \in \mathbb N$, the \emph{polynomial ring in $n$ indeterminates with coefficients in $\Bbbk$} by $\Bbbk[X_1, \dotsc, X_n]$.
For an \emph{exponent vector} $\alpha = (\alpha_1, \dotsc, \alpha_n) \in \mathbb N^n$ we write $X^{\alpha}\coloneqq X_1^{\alpha_1} \dotsm X_n^{\alpha_n}$.
The elements of the form $X^{\alpha}$ for $\alpha \in \mathbb N^n$ are called \emph{monomials}, and every element $f \in \Bbbk[X_1, \dotsc, X_n]$ admits a finite expression
\[ f = \sum_{i = 1}^s c_i X^{\alpha_i} \]
for distinct $\alpha_1, \dotsc, \alpha_s \in \mathbb N^n$ and for $c_1, \dotsc, c_s \in \Bbbk$, which is unique up to reordering of the sum.

The \emph{join} of two lattice elements $\alpha, \beta \in \mathbb Z^n$, denoted by $\alpha \vee \beta$, is defined as the lattice element $(\max(\alpha_i, \beta_i))_{1 \leq i \leq n}$ for $\alpha = (\alpha_i)_i$ and $\beta = (\beta_i)_i$.
Dually, the \emph{meet} of $\alpha$ and $\beta$, denoted by $\alpha \wedge \beta$, is defined as the lattice element $(\min(\alpha_i, \beta_i))_{1 \leq i \leq n}$.
Notice that, if $\alpha, \beta$ are exponent vectors, then so are $\alpha \vee \beta$ and $\alpha \wedge \beta$.

In the following, let $R = \Bbbk[X_1, \dotsc, X_n]$.
For $d \in \mathbb N$, an element $\mathbf f \in R^d$ admits a finite expression
\begin{equation}
    \label{eqn:mon-decomp}
    \mathbf f = \sum_{i = 1}^d f_i \mathbf e_i
\end{equation}
for $f_1, \dotsc, f_d \in R$, where $\mathbf e_i$ denotes the $i$-th standard unit vector.
The elements of the form $X^{\alpha} \mathbf e_i$ in $R^d$ with $\alpha \in \mathbb N^n$ and $1 \leq i \leq d$ are called \emph{module monomials}.
We denote the set of module monomials in $R^d$ by $\Mon(R^d)$, and it is obvious that $\Mon(R^d)$ is a basis of the $\Bbbk$-vector space $R^d$.
Notice that in the case $d = 1$ a module monomial in $R^1 = R$ is simply a monomial.
Then, by decomposing $f_1, \dotsc, f_d$ in \eqref{eqn:mon-decomp} into monomials, it follows that $\mathbf f$ admits a unique, finite expression
\[ \mathbf f = \sum_{\mathbf m \in \Mon(R^d)} c_{\mathbf m} \mathbf m \]
as a linear combination of distinct module monomials with coefficients $c_{\mathbf m} \in \Bbbk$.

We say that a polynomial $\mathbf g \in R^d$ \emph{divides} a polynomial $\mathbf h \in R^d$ if there exists a polynomial $f \in R$ such that $\mathbf h = f \cdot \mathbf g$.
Generalizing this, we say that a finite set $G = \{ \mathbf g_1, \dotsc, \mathbf g_t \} \subseteq R^d$ \emph{divides} a polynomial $\mathbf h \in R^d$ if there exist $f_1, \dotsc, f_t \in R^d$ such that $\mathbf h = \sum_{j=1}^t f_j \cdot \mathbf g_j$.
The \emph{$R$-span}~$\langle G \rangle_R$ of a subset $G \subseteq R^d$ is the submodule of all finite linear combinations $\sum_{\mathbf g \in G} f_{\mathbf g} \mathbf g$ with $f_{\mathbf g} \in R$.

We call a total order~$\preceq$ on $\Mon(R^d)$ a \emph{monomial order} on $R^d$ if
\begin{enumerate}[label=(\roman*)]
    \item every subset $S \subseteq \Mon(R^d)$ admits a minimal element with respect to $\preceq$,
    \item for all $i \in \{ 1, \dotsc, n \}$ and $\alpha \in \mathbb N^n$, it holds $\mathbf e_i \preceq X^\alpha \mathbf e_i$, and
    \item for all $i, j \in \{ 1, \dotsc, n \}$ and $\alpha, \beta, \gamma \in \mathbb N^n$, it holds $X^{\alpha + \beta} \mathbf e_i \preceq X^{\alpha + \gamma} \mathbf e_j$ whenever $X^\beta \mathbf e_i \preceq X^\gamma \mathbf e_j$.
\end{enumerate}

Let $\preceq$ be a monomial order on $\Mon(R^d)$.
For a polynomial $\mathbf f = \sum_{j = 1}^t c_j \mathbf m_j \in R$ with $\mathbf m_1 \prec \dotsb \prec \mathbf m_t \in \Mon(R^d)$, $c_j \in \Bbbk$, and $c_t \neq 0$, we call the term $\lt(\mathbf f) = c_t \mathbf m_t$ the \emph{leading term} of $\mathbf f$.
The \emph{leading coefficient} of $\mathbf f$ is defined as $\lc(\mathbf f) = c_t$ and the \emph{leading monomial} of $\mathbf f$ as $\lm(\mathbf f) = \mathbf m_t$.
For a submodule $U \subseteq R^d$ we write $\lm(U)$ and $\lt(U)$ to denote the subspaces generated by $\{ \lm(\mathbf f) \mid \mathbf f \in U \}$ and $\{ \lt(\mathbf f) \mid \mathbf f \in U \}$, respectively.
Moreover, for a polynomial $\mathbf f$ with leading monomial $\lm(\mathbf f) = X^{\alpha} \mathbf e_j$, we call the exponent vector $\alpha \in \mathbb N^n$ the \emph{leading degree of $\mathbf f$}.
For a submodule $U \subseteq R^d$ we call the set $\Mon(R^d) \setminus \lm(U)$ the \emph{sous-escalier of $U$}.

\begin{definition}[Gröbner basis]
    Let $U \subseteq R^d$ be a submodule.
    \begin{enumerate}[label=(\arabic*)]
        \item A \emph{Gröbner basis}~$G$ of $U$ is a finite set $G = \{ \mathbf g_1, \dotsc, \mathbf g_t \}$ such that $\langle G \rangle_R = U$ and $\langle \lm(G) \rangle_R = \lm(U)$.
        \item A Gröbner basis~$G$ is \emph{minimal} if there exist no divisibility relations between the leading monomials of $G$.
        \item A minimal Gröbner basis~$G$ is \emph{reduced} if, for all $\mathbf g_i \in G$, $\lc(\mathbf g_i) = 1$ and $\mathbf g_i - \lm(\mathbf g_i)$ is a $\Bbbk$-linear combination of monomials not in $\lm(U)$.
    \end{enumerate}
\end{definition}

Given a monomial order $\preceq$, a submodule $U \subseteq R^d$ and a Gröbner basis $G$ of $U$, for an $\mathbf f \in R^d$ the remainder on polynomial division of $\mathbf f$ by $G$ is uniquely determined by the choice of monomial order.
This remainder is called the \emph{normal form for $\mathbf f$ modulo $U$} and is denoted by $\NF_\preceq(\mathbf f, U)$ or $\NF(\mathbf f, U)$.
We extend the definition of $\NF(\mathord-, U)$ pointwise to finite subsets $X \subset R^d$.
For more details, see \cite{cox1997ideals}.

The existence of Gröbner bases for arbitrary submodules of $R^d$ follows from Buchberger's criterion and the algorithm derived from it, see \cite[Chapter 5]{BeckerWeispfenning} and \cite[Chapter 2]{cox1997ideals} for instance.
The \emph{S-polynomial} of a pair $\mathbf f, \mathbf g$ of module polynomials in $R^d$ is
\[ \mathbf S(\mathbf f, \mathbf g) \coloneqq \frac{\lcm(\lm(\mathbf f), \lm(\mathbf g))}{\lt(\mathbf f)} \mathbf f - \frac{\lcm(\lm(\mathbf f), \lm(\mathbf g))}{\lt(\mathbf g)} \mathbf g \]
if $\lm(\mathbf f) = X^{\alpha} \mathbf e_k$ and $\lm(\mathbf g) = X^{\beta} \mathbf e_k$ for some $1 \leq k \leq d$, and $\mathbf S(\mathbf f, \mathbf g) = 0$ otherwise.

\begin{remark}
     A finite subset $F\subseteq U$ of a submodule $U\subseteq R^d$ is a Gr\"{o}bner basis of $U$ if and only if for all $\mathbf{f},\mathbf{g}\in F$, their S-polynomial
 admits a \emph{standard representation}, namely an expression as
 \[ \mathbf S(\mathbf{f},\mathbf{g})=\sum_{\mathbf{h}\in F} q_{\mathbf{h}} \mathbf{h} \text, \] with $q_{\mathbf{h}}\in R$ and $\lm(q_{\mathbf{h}}\mathbf{h})\preceq \lm(\mathbf S(\mathbf{f},\mathbf{g}))$ for all $\mathbf{h}\in H$.
\end{remark}

\begin{theorem}[Buchberger's criterion, see {\cite[Theorem 6, §6, Chapter 2]{cox1997ideals}}]
    Let $U \subseteq R^d$ be a submodule.
    A finite set $G = \{ \mathbf g_1, \dotsc, \mathbf g_t \}$ is Gröbner basis of $U$ if and only if the division algorithm reduces the $S$-polynomials $\mathbf S(\mathbf g_i, \mathbf g_j)$ to zero for all $1 \leq i < j \leq t$.
\end{theorem}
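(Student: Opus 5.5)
We prove the two directions of Buchberger's criterion separately, using the standard-representation characterization from the preceding remark and the division algorithm on $R^d$ for the fixed monomial order $\preceq$.

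\emph{Necessity.} Suppose $G$ is a Gröbner basis of $U$. Each $\mathbf S(\mathbf g_i,\mathbf g_j)$ is an $R$-combination of $\mathbf g_i$ and $\mathbf g_j$, so it lies in $U$. Dividing it by $G$ yields a remainder $\mathbf r\in U$, none of whose monomials is divisible by any $\lm(\mathbf g_k)$. If $\mathbf r\neq 0$, then $\lm(\mathbf r)\in\lm(U)=\langle\lm(G)\rangle_R$. Since this is a monomial submodule, $\lm(\mathbf r)$ would then be divisible by some $\lm(\mathbf g_k)$, which is a contradiction. Hence $\mathbf r=0$.

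\emph{Sufficiency.} Assume every $\mathbf S(\mathbf g_i,\mathbf g_j)$ reduces to zero under division by $G$. Recording the quotients of each such division gives a standard representation $\mathbf S(\mathbf g_i,\mathbf g_j)=\sum_k q_k\mathbf g_k$ with $\lm(q_k\mathbf g_k)\preceq\lm(\mathbf S(\mathbf g_i,\mathbf g_j))$.

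Now take $\mathbf f\in U$. Among all representations $\mathbf f=\sum_k h_k\mathbf g_k$, choose one minimizing $\mathbf m:=\max_k\lm(h_k\mathbf g_k)$; such a minimum exists because $\preceq$ is a well-order. Clearly $\lm(\mathbf f)\preceq\mathbf m$. If equality holds, $\lm(\mathbf f)$ is a monomial multiple of some $\lm(\mathbf g_k)$, which is what we need.

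If instead $\lm(\mathbf f)\prec\mathbf m$, consider the top parts $\sum_{k:\lm(h_k\mathbf g_k)=\mathbf m}\lt(h_k)\mathbf g_k$. Their leading terms cancel, and all the $\lm(\mathbf g_k)$ involved lie in the same component $\mathbf e_\ell$. The standard cancellation lemma then rewrites this sum as a $\Bbbk$-combination of terms $X^{\gamma}\mathbf S(\mathbf g_i,\mathbf g_j)$, each with $X^\gamma\lcm(\lm\mathbf g_i,\lm\mathbf g_j)=\mathbf m$. The lemma is a telescoping argument using that the coefficients of the leading terms sum to zero.

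Substitute the standard representation of each $\mathbf S(\mathbf g_i,\mathbf g_j)$, multiplied by $X^\gamma$. The monomial order compatibility axiom (iii) guarantees that every new term has leading monomial strictly below $\mathbf m$, since $\lm(\mathbf S(\mathbf g_i,\mathbf g_j))\prec\lcm$. This produces a representation of $\mathbf f$ with smaller maximum, contradicting the minimality of $\mathbf m$.

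Therefore $\lm(U)\subseteq\langle\lm(G)\rangle_R$. The reverse inclusion holds because $G\subseteq U$. Together with $\langle G\rangle_R=U$, this shows $G$ is a Gröbner basis.

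The main obstacle is the cancellation lemma in the module setting. One must check that only pairs of $\mathbf g_i,\mathbf g_j$ whose leading monomials share a basis vector $\mathbf e_\ell$ occur, which justifies defining $\mathbf S$ as $0$ otherwise. One must also check that the telescoping sum is correct with leading coefficients normalized.
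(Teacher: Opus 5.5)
Your argument is correct and is the standard proof from the cited source (Cox, Little and O'Shea, Chapter~2, \S6, Theorem~6), carried over to $R^d$: a minimal representation, the cancellation lemma on the top-degree parts within one component $\mathbf e_\ell$, and substitution of the standard representations of the $S$-polynomials. The paper gives no proof beyond that citation. You should state explicitly that sufficiency assumes $\langle G\rangle_R = U$, which the statement leaves implicit; you use it both to write $\mathbf f=\sum_k h_k\mathbf g_k$ and in your final step.
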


Further, every submodule $U \subseteq R^d$ has a unique reduced Gröbner basis, which does not depend on the input generators of $U$, and depends only on the choice of the monomial order $\preceq$ on $R^d$, cf.\ \cite{Buchberger2006,cox1997ideals} and \cite[Section 5.2]{CLOUsing}.
If the submodule $U$ is \emph{monomial}, which means that it is generated by module monomials, then the reduced Gröbner basis of $U$ does not depend on the choice of monomial order $\preceq$ at all, i.e.\ it is a \emph{universal} Gröbner basis. The universal Gröbner basis of a monomial submodule coincides with its minimal generating set.

However, the unique reduced Gröbner basis $G$ of a submodule $U \subseteq R^d$ is not necessarily a minimal set of generators of $U$; even the number of elements in $G$ can vary for different choices of monomial orders, see Example~\ref{ex:Resolution} and Section~\ref{ssect:gen-min}.

Next, we recall Schreyer’s theorem, which provides an effective Gröbner basis method for computing generating sets for the syzygy module, the module of relations among the generators of the module. In particular, given a Gröbner basis of a submodule of a free module, Schreyer’s construction yields a Gröbner basis for the syzygy module.

\begin{theorem}[Schreyer, cf.\ {\cite[pp.~222--225]{CLOUsing}}]\label{thm:SchreyerConstruction}
    Let $U \subseteq R^d$ be a submodule and $G = \{ \mathbf g_1, \dotsc, \mathbf g_s \}$ a Gröbner basis of $U$.
    For all $1 \leq i, j \leq s$ such that $\mathbf S(\mathbf g_i, \mathbf g_j) \neq 0$ choose expressions $\mathbf S(\mathbf g_i, \mathbf g_j) = \sum_{k=1}^s a_{ijk} \mathbf g_k$ where $a_{ijk} \in R$ and $\lt(a_{ijk} \mathbf g_k) \preceq \lt(\mathbf S(\mathbf g_i, \mathbf g_j))$, and define
    \[ \mathbf S_{ij} \coloneqq \frac{\lcm(\lm(\mathbf g_i), \lm(\mathbf g_j))}{\lt(\mathbf g_i)} \mathbf e_i - \frac{\lcm(\lm(\mathbf g_i), \lm(\mathbf g_j))}{\lt(\mathbf g_j)} \mathbf e_j - \sum_{k = 1}^s a_{ijk} \mathbf e_k \in R^s \text. \]
    The sequence
    \[ \begin{tikzcd}[ampersand replacement=\&]
        R^s \otimes_R R^s \rar["\partial_1"] \& R^s \rar["\varphi"] \& U \rar \& 0 \rlap{\text,}
    \end{tikzcd} \]
    where $\varphi(\mathbf e_i)\coloneqq \mathbf g_i$ and $\partial_1(\mathbf e_i \otimes \mathbf e_j)\coloneqq \mathbf S_{ij}$, is exact.

    Moreover, $\{ \mathbf S_{ij} \mid 1 \leq i < j \leq s \}$ forms a Gröbner basis of $\ker \varphi = \Syz(\mathbf g_1, \dotsc, \mathbf g_s)$ with respect to the monomial order $\prec_G$ defined by
    \[ X^\alpha \mathbf e_i \prec_G X^\beta \mathbf e_j \]
    if $\lt(X^\alpha \mathbf g_i) \prec \lt(X^\beta \mathbf g_j)$, or if $\lt(X^\alpha \mathbf g_i) = \lt(X^\beta \mathbf g_j)$ and $i > j$, for $\alpha, \beta \in \mathbb N^n$ and $1 \leq i, j \leq s$.
\end{theorem}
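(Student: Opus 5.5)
This is the classical Schreyer theorem, so the plan follows the standard argument from \cite{CLOUsing}, organised around the Buchberger criterion. The proof has three parts: $\varphi\circ\partial_1=0$; the $\mathbf S_{ij}$ generate $\ker\varphi$; and the $\mathbf S_{ij}$ satisfy the leading-term condition for $\prec_G$.

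\textbf{Step 1: $\varphi\circ\partial_1=0$ and leading terms of the $\mathbf S_{ij}$.} That $\prec_G$ is a monomial order is routine to check from the axioms of $\prec$ together with the index tie-break. By construction,
\[ \varphi(\mathbf S_{ij})=\mathbf S(\mathbf g_i,\mathbf g_j)-\sum_k a_{ijk}\mathbf g_k=0. \]
Next we identify $\lm_{\prec_G}(\mathbf S_{ij})$ for $i<j$. Put $m_{ij}=\lcm(\lm\mathbf g_i,\lm\mathbf g_j)$. The two leading summands $\frac{m_{ij}}{\lm \mathbf g_i}\mathbf e_i$ and $\frac{m_{ij}}{\lm\mathbf g_j}\mathbf e_j$ map under $\varphi$ to elements with the same leading monomial $m_{ij}$. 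The tie-break ($i>j$ is smaller) makes the $\mathbf e_i$ term the larger one.

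The remaining terms $a_{ijk}\mathbf e_k$ have $\lt(a_{ijk}\mathbf g_k)\preceq\lt \mathbf S(\mathbf g_i,\mathbf g_j)\prec m_{ij}$. Here the inequality is strict because the S-polynomial cancels the top term. So these terms are $\prec_G$-smaller. Hence
\[ \lm_{\prec_G}(\mathbf S_{ij})=\frac{m_{ij}}{\lm \mathbf g_i}\mathbf e_i. \]

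\textbf{Step 2: every syzygy is handled, proving the Gröbner property.} It suffices to show that for every nonzero $\mathbf u=\sum u_k\mathbf e_k\in\ker\varphi$, the leading monomial $\lm_{\prec_G}(\mathbf u)$ is divisible by some $\lm(\mathbf S_{ij})$. Together with the $\mathbf S_{ij}\in\ker\varphi$, the standard division argument then gives both the Gröbner basis property and generation. Generation in turn yields exactness of $R^s\otimes R^s\to R^s\to U\to0$, using that $\varphi$ is onto since $G$ generates $U$.

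So let $\lm_{\prec_G}(\mathbf u)=X^\alpha\mathbf e_i$ and let $m=\lm(X^\alpha \mathbf g_i)$. Collect the set $S$ of indices $k$ for which the top term of $u_k$, say $X^{\beta_k}$, satisfies $\lm(X^{\beta_k}\mathbf g_k)=m$. Because $\varphi(\mathbf u)=0$, the terms at $m$ must cancel, so $|S|\ge2$. By the choice of the $\prec_G$-leading monomial, $i$ is the smallest index in $S$, and there is some $j\in S$ with $j>i$.

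Then $\lm \mathbf g_i$ and $\lm \mathbf g_j$ both divide $m$ and lie in the same component, so $m_{ij}\mid m$. Therefore $\frac{m_{ij}}{\lm\mathbf g_i}$ divides $X^\alpha$, i.e.\ $\lm(\mathbf S_{ij})$ divides $\lm_{\prec_G}(\mathbf u)$.

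\textbf{Main obstacle.} The delicate point is the bookkeeping in Step 2. One must ensure that $\lm_{\prec_G}(\mathbf u)$ genuinely corresponds to the maximal $\prec$-monomial $m$ among the $\lm(\text{term}\cdot\mathbf g_k)$, so that cancellation happens exactly there. One must also ensure that the index tie-break selects the smallest index, so that a larger partner $j$ exists. I would verify this carefully from the definition of $\prec_G$ before running the division argument.
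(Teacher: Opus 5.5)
Your proof is correct and is essentially the standard argument from the source the paper cites (Cox--Little--O'Shea, \emph{Using Algebraic Geometry}, Ch.~5, \S3). That argument first shows $\lm_{\prec_G}(\mathbf S_{ij}) = \frac{m_{ij}}{\lm(\mathbf g_i)}\mathbf e_i$ for $i<j$, and then uses the cancellation of top terms in an arbitrary syzygy to find a partner index $j>i$ with $m_{ij}$ dividing the relevant monomial; the paper itself gives no proof beyond this citation.
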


\begin{remark}
\label{rmk:Schreyer thm for obtaining kernel of morphism of free modules}
     As a consequence of Schreyer's theorem, kernels of homomorphisms between free modules can be computed algorithmically. Indeed, once a homomorphism of free $R$-modules is encoded effectively—for example, by a (monomial) matrix whose columns represent the images of the standard basis elements—its image is the $R$-submodule generated by these column vectors. Applying Schreyer’s theorem to this generating set produces a Gröbner basis for the corresponding syzygy module, which coincides with the kernel of the homomorphism.
\end{remark}

\subsection{Graded rings and modules}
Next we recall the basics of graded rings, where we focus solely on the classical case of $\mathbb Z^n$-graded rings, with special attention to the polynomial rings.

For further background on graded rings and their commutative algebra, we refer the reader to the textbook of Bruns and Herzog \cite{BrunsHerzog1998}.

\begin{definition}
    Let $n \in \mathbb N$.
    \begin{enumerate}
        \item An \emph{$n$-graded ring} $R$ is a ring $R$ with a decomposition $R = \bigoplus_{\alpha \in \mathbb Z^n} R_\alpha$ into additive subgroups $R_\alpha$ such that $R_\alpha R_\beta \subseteq R_{\alpha + \beta}$ for all $\alpha, \beta \in \mathbb Z^n$.

        \item An ideal $I$ of $R$ is said to be \emph{homogeneous} if $I = \bigoplus_{\alpha \in \mathbb Z^n} I \cap R_\alpha$.

        \item An \emph{$n$-graded $R$-module} $M$ is an $R$-module $M$ with a decomposition $M = \bigoplus_{\alpha \in \mathbb Z^n} M_\alpha$ into additive subgroups $M_\alpha$ such that $R_\alpha M_\beta \subseteq M_{\alpha + \beta}$ for all $\alpha, \beta \in \mathbb Z^n$.

        \item An $R$-homomorphism $f\colon M \to N$ is \emph{$n$-graded} if $f(M_\alpha) \subseteq N_\alpha$ for all $\alpha \in \mathbb Z^n$.
    \end{enumerate}
\end{definition}

Let $R = \bigoplus_{\alpha \in \mathbb Z^n} R_\alpha$ be an $n$-graded ring and $M = \bigoplus_{\alpha \in \mathbb Z^n} M_\alpha$ an $n$-graded $R$-module.
We call an element $m \in M$ that is contained in $M_\alpha$, for $\alpha \in \mathbb Z^n$, \emph{homogeneous} of \emph{degree $\alpha$}.
We say that an $n$-graded module $M = \bigoplus_{\alpha \in \mathbb Z^n} M_\alpha$ is \emph{finitely supported} if the set $\supp M\coloneqq\{ \alpha \in \mathbb Z^n \mid M_{\alpha} \neq 0 \}$, which is called the \emph{support of $M$}, is finite.

Grading on a ring is a proper structure and has to be regarded as such.
In particular, every ring can be endowed with some $n$-grading.

\begin{remark}%
    \label{rem:triv-grad}
    Every ring $R$ can be regarded as an $n$-graded ring with the \emph{trivial $n$-grading} that is given by $R = \bigoplus_{\alpha \in \mathbb Z^n} R_\alpha$ with $R_\alpha = R$ for $\alpha = 0$ and $R_\alpha = 0$ for $\alpha \neq 0$.
\end{remark}

The most important cases of graded rings are the polynomial rings in $n$ indeterminates with the \emph{standard gradings}.
For $n \in \mathbb N$ we consider the polynomial ring $R = \Bbbk[X_1, \dotsc, X_n]$ in $n$ indeterminates $X_1, \dotsc, X_n$.
For a term $f = \lambda X^{\alpha} \in R$, where $\alpha \in \mathbb N^n$ is an exponent vector, we say that $f$ has \emph{degree $\alpha$}.
Since the degree of any monomial is unique, we write $\mdeg f$ to denote the degree of $f \in \Mon(R)$.
More general, if $\mathbf f = \lambda X^{\alpha} \mathbf e_i \in F$ is a module monomial, for $F$ a free $R$-module, we say that $\mdeg \mathbf f = \alpha$.

The $n$-graded structure on $R = \Bbbk[X_1, \dotsc, X_n]$ is given by
\[ R = \bigoplus_{\alpha \in \mathbb Z^n} R_\alpha \]
where $R_\alpha$ is the $\Bbbk$-linear subspace of $R$ generated by monomials of degree $\alpha$.
In particular, $R_\alpha = 0$ for $\alpha \in \mathbb Z^n \setminus \mathbb N^n$.
 \begin{remark}
    An ideal $I\subseteq R$ is homogeneous with respect to the standard grading if and only if all its reduced Gr\"{o}bner bases consist of homogeneous polynomials~\cite[Theorem 8.3.2]{cox1997ideals}.
    This is due to the fact that $S$-polynomials and the division algorithm respect the grading.
\end{remark}

\begin{definition}
    An $n$-graded ring~$R$ is said to be \emph{(graded) local} if $R$ has a unique maximal homogeneous ideal.
\end{definition}

The polynomial ring $\Bbbk[X_1, \dotsc, X_n]$ with the standard grading is graded local with maximal ideal $\mathfrak m = (X_1, \dotsc, X_n)$, that is, $\mathfrak m$ consists of all monomials $f \in \Bbbk[X_1, \dotsc, X_n]$ with $f_0 = 0$.
The residue class field of $\Bbbk[X_1, \dotsc, X_n]$ is the base field~$\Bbbk$.

Let $M = \bigoplus_{\alpha \in \mathbb Z^n} M_\alpha$ be an $n$-graded $R$-module.
For $\beta \in \mathbb Z^n$ we define $\Sigma^\beta M = \bigoplus_{\alpha \in \mathbb Z^n} M_{\alpha - \beta}$, the \emph{$\beta$-shift of $M$}.
Notice that, if $\beta \leq \gamma$ in $\mathbb Z^n$, the multiplication by $X^{\gamma - \beta}$ induces a graded homomorphism $\Sigma^{\gamma} M \to \Sigma^\beta M$.

\begin{definition}[Graded hom and tensor product]
    Let $R$ be a $n$-graded ring and $M, N$ be graded $R$-modules.
    \begin{enumerate}
        \item The \emph{graded hom from $M$ to $N$} is the graded $R$-module
        \[ \hom_R(M, N) \coloneqq \bigoplus_{\alpha \in \mathbb Z^n} \Hom_R(\Sigma^\alpha M, N) \text. \]
        \item The \emph{graded tensor product of $M$ and $N$} is the graded $R$-module $M \otimes_R N$ with
        \[ (M \otimes_R N)_\alpha \coloneqq (\bigoplus_{\beta \in \mathbb Z^n} M_\beta \otimes_{\mathbb Z} N_{\alpha - \beta})/S_\alpha \text, \qquad \text{for $\alpha \in \mathbb Z$,} \]
        where $S_\alpha$ is the subgroup generated by all elements of the form
        \[ rm \otimes n - m \otimes rn \qquad \text{with $r \in R_\gamma$, $m \in M_{\gamma'}$ and $n \in N_{\gamma''}$} \]
        for all $\gamma, \gamma', \gamma'' \in \mathbb Z^n$ such that $\gamma + \gamma' + \gamma'' = \alpha$.
    \end{enumerate}
\end{definition}

We say that a graded $R$-module $M$ is \emph{projective} if the covariant graded hom functor $\hom_R(M, \mathord-)$ is exact.
Dually, $M$ is said to be \emph{injective} if the contravariant graded hom functor $\hom_R(\mathord-, M)$ is exact.
A left $R$-module $M$ with exact functor $\mathord- \otimes_R M$ is said to be \emph{flat}.

By a classical result for local rings, any finitely generated projective graded $R$-module is \emph{graded free}, which means that for such module $M$ there exists a family $(\beta_j)_{j \in J}$ in $\mathbb Z^n$ such that $M \cong \bigoplus_{j \in J} \Sigma^{\beta_j} R$, cf.\ \cite[Theorem 19.19, p.~292]{Lam2001} and \cite{HoeppnerLenzing1981}.
As an immediate consequence, homomorphisms between finitely generated projective graded modules can be represented by matrices.

The standard technique to construct flat modules is by localization.

\begin{definition}[Graded localization]
    Let $R$ be a graded ring.
    \begin{enumerate}
        \item A \emph{graded multiplicative system} $S$ is a subset $S \subseteq R$ that consists of homogeneous elements such that $s s' \in S$ for all $s, s' \in S$.
        \item The \emph{graded localization} of $R$ by a graded multiplicative system $S$ is the set $S^{-1}R$ of formal fractions $r/s$ with $r \in R$ and $s \in S$.
        \item For a homogeneous prime ideal $\mathfrak p$ in $R$ the \emph{graded localization along $\mathfrak p$} is the set $R_{\mathfrak p} \coloneqq \{ s \in R \mid s \notin \mathfrak p\text,\ \text{$s$ homogeneous} \}^{-1}R$.
    \end{enumerate}
\end{definition}

For any graded multiplicative system $S$ the set $S^{-1}R$ has a natural ring structure, which is given for $r, r' \in R$, $s, s' \in S$ by $r/s + r'/s' = (r s' + r' s)/(ss')$ and $r/s \cdot r'/s' = (rr')/(ss')$.
Moreover, $S^{-1}R$ is naturally $n$-graded where an element $r/s \in S^{-1}R$, for $r \in R$ and $s \in S$ homogeneous of degrees $\alpha$ and $\beta$, respectively, has degree $\alpha - \beta$.

An \emph{injective hull}~$E$ of a (graded) module $M$ is a (graded) injective module~$E$ with a monomorphism $\varphi\colon M \to E$ such that every endomorphism $f\colon E \to E$ with $f \circ \varphi = \varphi$ is an automorphism.
It is well-known that every (graded) module~$M$ admits an injective hull, denoted by $E(M)$, which is uniquely determined up to isomorphism.

\subsection{Flat-injective presentations}

Let $R$ be a local graded ring with maximal ideal $\mathfrak m$ and residue class field $\Bbbk = R/\mathfrak m$.
For a graded $R$-module $M$ we denote by $M^\vee$ the \emph{Matlis dual of $M$} defined as $\hom_R(M, E(\Bbbk))$ where $E(\Bbbk)$ denotes the graded injective hull of the $R$-module $\Bbbk$.
Then we have an isomorphism
\begin{equation}%
    \label{eq:matlis-iso}
    M^\vee = \hom_R(M, E(\Bbbk)) \cong \bigoplus_{\alpha \in \mathbb Z^n} \Hom_{\Bbbk}(M_{-\alpha}, \Bbbk) \text.
\end{equation}

\begin{definition}%
    \label{def:flange}
    Let $M$ be a graded $R$-module.
    \begin{enumerate}
        \item A \emph{flat-injective presentation} $\varphi$ for $M$ is an $R$-homomorphism $\varphi$ from a flat $R$-module $F$ into an injective $R$-module $E$ such that $\im \varphi \cong M$.
        \item The \emph{Matlis-dual presentation} of a flat-injective presentation $\varphi\colon F \to E$ is the $R$-homomorphism $\varphi^{\vee}\coloneqq \hom_R(\varphi, E(\Bbbk))\colon E^{\vee} \to F^{\vee}$, that is, $\varphi^{\vee}_\alpha$ is given by $\varphi^{\vee}_\alpha(f)\coloneqq f \circ \varphi$, for $\Hom_{\Bbbk}(E_{-\alpha}, \Bbbk)$, under the isomorphism \eqref{eq:matlis-iso}.
        \item A flat-injective presentation $\varphi$ is \emph{generator-minimal} if $\ker \varphi \subseteq \mathfrak mF$.
        \item A flat-injective presentation $\varphi$ is \emph{cogenerator-minimal} if $\varphi^{\vee}$ is generator-minimal.
    \end{enumerate}
\end{definition}

Flat-injective presentations were introduced by Miller \cite{Miller2025} under the term \emph{flange presentations}.
If a flat-injective presentation $\varphi\colon F \to E$ has free $F$ then we call $\varphi$ a \emph{free-injective presentation}.
If in addition $E$ is of the form $\bigoplus_{i \in I} \Sigma^{\alpha_i} E(\Bbbk)$, then we call $\varphi$ a \emph{free-cofree presentation}.

It is a direct consequence of the definition that a flat-injective presentation $\varphi\colon F \to E$ is generator-minimal if and only if $F$ is a flat cover of $\im \varphi$.
Further, it is due to Hyry and Puuska \cite{HyryPuuska2025} that such flat-injective presentation is cogenerator-minimal if and only if $E$ is an injective hull of $\im \varphi$.

A crucial property of flat-injective presentations is that they represent any module as a submodule of an injective module.
This can be used to perform computations on a module as long as the analytic structure of injective modules is well-understood.
In fact, for any injective module $E$ there exists a family $(\alpha_i)_{i \in I}$ in $\mathbb Z^n$ and a family $(\mathfrak p_i)_{i \in I}$ of homogeneous prime ideals in $R$ such that
\[ E \cong \bigoplus_{i \in I} \Sigma^{\alpha_i} E_R(R/\mathfrak p_i) \text, \]
where $E_R(R/\mathfrak p_i)$ denotes the injective hull of $R/\mathfrak p_i$, cf. \cite{Matlis1958, GotoWatanabe1978b}.
Thus, in order to obtain the analytic structure of an element $m \in \im \varphi$ it is sufficient to analyze the structures of $E(R/\mathfrak p)$ for homogeneous prime ideals $\mathfrak p$.

For the polynomial ring $\Bbbk[X_1, \dotsc, X_n]$ the injective hulls $E(R/\mathfrak p)$ correspond to factor modules of $E(R)$.
A concrete representation of $E(R)$ can be chosen as the Laurent polynomial ring $\Bbbk[X_1^{\pm1}, \dotsc, X_n^{\pm1}]$ regarded as a graded $\Bbbk[X_1, \dotsc, X_n]$-module.
For a prime ideal $\mathfrak p = (X_{k_1}, \dotsc, X_{k_t})$, with $1 \leq k_1 < \dotsb < k_t \leq n$, the injective hull of $R/\mathfrak p$ is given by
\[ E(R/\mathfrak p) \cong \Bbbk[X_1^{\pm1}, \dotsc, X_n^{\pm1}] / S_{\mathfrak p} \]
where $S_{\mathfrak p}$ is an appropriate submodule of $\Bbbk[X_1^{\pm1}, \dotsc, X_n^{\pm_1}]$ depending on $\mathfrak p$.
In particular, it holds $E(\Bbbk) = \Bbbk[X_1^{-1}, \dotsc, X_n^{-1}]$, which consists of Laurent polynomials with non-positive exponents.
See Section~\ref{sect:gb-free-cofree} and in particular Proposition~\ref{prop:inj-pos-graded} for a deduction of this representation using the theory of Čech complexes.

This leads to the idea that flat-injective presentations can be represented by matrices of special types.
For this purpose, we restrict our attention to the case where $\varphi\colon F \to E$ is a flat-injective presentation with $E \cong E(\Bbbk)^m$ for some $m \in \mathbb N$.
Let $\alpha_1, \dotsc, \alpha_s \in \mathbb Z^n$ and $\beta_1, \dotsc, \beta_t \in \mathbb Z^n$.

\begin{definition}
    A \emph{free-injective matrix} is a triple $(A, (\alpha_i)_i, (\beta_j)_j)$ where $A = (a_{ij})_{i,j}$ is an $s \times t$-matrix with entries in $\Bbbk$ such that $a_{ij} = 0$ if not $\beta_j \leq \alpha_i$, for $1 \leq i \leq s$ and $1 \leq j \leq t$.

    The degrees $\alpha_1, \dotsc, \alpha_s$ are called \emph{cogenerator degrees} and the degrees $\beta_1, \dotsc, \beta_t$ are called \emph{generator degrees}.
\end{definition}

Let $\alpha_1, \dotsc, \alpha_s, \beta_1, \dotsc, \beta_t \in \mathbb Z^n$, $F\coloneqq \bigoplus_{j = 1}^t \Sigma^{\beta_j} R$, and $E \coloneqq \bigoplus_{i = 1}^s \Sigma^{\alpha_i} E(\Bbbk)$.

\begin{proposition}
    Let $\alpha_1, \dotsc, \alpha_s, \beta_1, \dotsc, \beta_t \in \mathbb Z^n$.
    There is a one-to-one correspondence between $\Hom_R(F, E)$ and the set of graded matrices w.r.t.\ generator degrees $(\alpha_1, \dotsc, \alpha_s)$ and cogenerator degrees $(\beta_1, \dotsc, \beta_t)$.

    \begin{proof}
        Proofs of this result are given in \cite[pp.\ 385--386]{HelmMiller2005} and \cite[Proposition 5.18]{Miller2025}.
    \end{proof}
\end{proposition}

\begin{remark}%
    \label{rem:fg-fs-Artinian}
    For a finitely generated graded $\Bbbk[X_1, \dotsc, X_n]$-module $M$ the following conditions are equivalent, see also \cite[Proposition~18.1.2]{ChristensenFoxbyHolm2024}:
    \begin{itemize}
        \item $M$ is Artinian;
        \item $M$ can be embedded in $\bigoplus_{i = 1}^s \Sigma^{\alpha_i} E(\Bbbk)$ for some $\alpha_1, \dotsc, \alpha_s \in \mathbb Z^n$;
        \item $M$ is finitely supported, which means that $\{ a \in \mathbb Z^n \mid M_a \neq 0 \}$ is a finite set.
    \end{itemize}
    Therefore, every finitely generated Artinian $\Bbbk[X_1, \dotsc, X_n]$-module admits a flat-injective presentation $\varphi\colon F \to E$ where $F$ is free and $E$ is a finite direct sum of shifts of $E(\Bbbk)$.
    Such a flat-injective presentation admits a free-injective matrix, and the columns of such free-injective matrix correspond to analytic elements of the presented module.
\end{remark}

\begin{example}%
    \label{ex:flange-1}
    We consider the finitely generated graded $\Bbbk[X_1, X_2]$-module $M$ represented by the diagram
    \[ \begin{tikzcd}[ampersand replacement=\&]
        \Bbbk \& \Bbbk^2 \& \Bbbk \\
        0 \& \Bbbk \& \Bbbk \rlap{\text,}
        \arrow[from=1-1, to=1-2, "{\left(\begin{smallmatrix} 0 \\ 1 \end{smallmatrix}\right)}"]
        \arrow[from=2-2, to=1-2, "{\left(\begin{smallmatrix} 1 \\ 0 \end{smallmatrix}\right)}"]
        \arrow[from=1-2, to=1-3, "{\left(\begin{smallmatrix} 1 & 1 \end{smallmatrix}\right)}"]
        \arrow[from=2-1, to=1-1]
        \arrow[from=2-1, to=2-2]
        \arrow[from=2-2, to=2-3, "1"]
        \arrow[from=2-3, to=1-3, "1"]
    \end{tikzcd} \]
    where the lower left cell denotes the degree $(0, 0)$ and the upper right cell denotes the degree $(2, 1)$.
    This module arises naturally as a lattice embedding of a non-trivial indecomposable representation of the Dynkin quiver $\partial_4$, see \cite[Section 5.2]{lenzen2024computing} for the module, \cite[Definition 7]{buchet2022realizations} for a related class of indecomposable modules and \cite[Chapter 9]{Barot2015} for an account on indecomposable quiver representations.
    A realization of the module $M$ as a bifiltration $X$ with $H_1(X) \cong M$ is given in Figure~\ref{fig:flange-bifilt}.

    By applying the techniques from \cite[Theorem 4.3]{GrimpenStefanou2025}, this module can be represented by the free-injective matrix
    \[ A = (a_{ij})_{i, j} \coloneqq \begin{tikzpicture}[ampersand replacement=\&, baseline=(m-4-1.base)]
    	\matrix (m) [matrix of math nodes] {
    		\&[1em] (1,0) \& (0,1) \& (2,0) \& (1,1) \& (1,1) \& (2,1) \\
    		(1,0) \& 1 \& 0 \& 0 \& 0 \& 0 \& 0 \\
    		(0,1) \& 0 \& 1 \& 0 \& 0 \& 0 \& 0 \\
    		(2,0) \& 1 \& 0 \& 1 \& 0 \& 0 \& 0 \\
    		(1,1) \& 1 \& 0 \& 0 \& 1 \& 0 \& 0 \\
    		(1,1) \& 0 \& 1 \& 0 \& 0 \& 1 \& 0 \\
    		(2,1) \& 1 \& 1 \& 1 \& 1 \& 1 \& 1 \\
    	};
    	\node[rectangle, left delimiter={[}, right delimiter={]}, fit=(m-2-2.north west) (m-7-7.south east)] {};
    \end{tikzpicture} \text, \]
    where the column and row labels denote the generator and cogenerator degrees, respectively.
    That is, the module~$M$ is isomorphic to the submodule of
    \[ E \coloneqq \Sigma^{(1, 0)} E(\Bbbk) \oplus \Sigma^{(0, 1)} E(\Bbbk) \oplus \Sigma^{(2, 0)} E(\Bbbk) \oplus \Sigma^{(1, 1)} E(\Bbbk)^2 \oplus \Sigma^{(2, 1)} E(\Bbbk) \]
    generated by the images of the elements
    \[ \mathbf g_j \coloneqq \sum_{i=1}^6 a_{ij} \mathbf e_i \in F \qquad \text{for $j \in \{ 1, \dotsc, 6 \}$.} \]

    A minimal flat-injective presentation for the module $M$ is given by
    \[ \tilde A = (\tilde a_{ij})_{i, j} \coloneqq \begin{tikzpicture}[ampersand replacement=\&, baseline=(m-3-1.north)]
    	\matrix (m) [matrix of math nodes] {
    		\&[1em] (1,0) \& (0,1) \\
    		(1,1) \& 1 \& 0 \\
    		(2,1) \& 1 \& 1 \\
    	};
    	\node[rectangle, left delimiter={[}, right delimiter={]}, fit=(m-2-2.north west) (m-3-3.south east)] {};
    \end{tikzpicture} \text. \]
    This is obtained by the minimization technique  in \cite[Algorithm 4.8]{GrimpenStefanou2025}.
    Topologically, the two columns correspond to the cycles $z_1$ and $z_2$ depicted in Figure~\ref{fig:flange-bifilt}.
    The death in degree $(1, 1)$, represented by the first column, corresponds to the fact that $z_1 - z_2$ is a boundary outside of the sublattice $\{ \alpha \in \mathbb Z^2 \mid \alpha \leq (1, 1) \}$.
    Analogously, $z_1$ and $z_2$ are boundaries outside of the sublattice $\{ \alpha \in \mathbb Z^2 \mid \alpha \leq (2, 1) \}$, which corresponds to the death in degree $(2, 1)$, represented by the second column.

    We use the free-injective presentations~$A$ and~$\tilde A$ as a rolling example throughout Section~\ref{sect:mgraded-pres} and Section~\ref{sect:minimal-res}.
    \begin{figure}
        \centering
        \begin{tikzpicture}[ampersand replacement=\&, box/.style={text width=25mm, draw, align=center}, simplex0/.style={radius=2pt, draw=black, fill=white, line width=1pt}, simplex1/.style={draw=black}, simplex2/.style={fill=lightgray}]
        	\matrix (m) [column sep=0.5cm, row sep=0.5cm] {
        		\node[box] (m-1-1) {
        			\tikz {
        				\path
        					(0, 0) coordinate (a) node[left=2pt] {$v_3$}
        					(-0.7, 1) coordinate (b) node [left=5pt, above] {$v_4$}
        					(0.7, 1) coordinate (c) node [right=5pt, below] {$v_2$}
        					(0, 2) coordinate (d) node [right=2pt] {$v_1$}
        					(0, 1) coordinate (z1)
        					(0, 1.35) coordinate (z2);
        				\node at (z1) {$z_1$};
        				\draw[->] (z1) +(2.2mm, 1.35mm) arc[start angle=30, delta angle=300, radius=2.7mm];
        				\path[simplex1]
        					foreach \x/\y in {a/b, a/c, b/d, c/d} {(\x) edge (\y)};
        				\path[simplex0]
        					foreach \x in {a, b, c, d} {(\x) circle {}};
        			}
        		};
        		\&
        		\node[box] (m-1-2) {
        			\tikz {
        				\path
        					(0, 0) coordinate (a) node[left=2pt] {$v_3$}
        					(-0.7, 1) coordinate (b) node [left=5pt, above] {$v_4$}
        					(0.7, 1) coordinate (c) node [right=5pt, below] {$v_2$}
        					(0, 2) coordinate (d) node [right=2pt] {$v_1$}
        					(0, 1) coordinate (z1)
        					(0, 1.35) coordinate (z2);
        				\path[simplex1]
        					foreach \x/\y in {a/b, a/c, b/c, b/d, c/d} {(\x) edge (\y)};
        				\path[simplex0]
        					foreach \x in {a, b, c, d} {(\x) circle {}};
        			}
        		};
        		\&
        		\node[box] (m-1-3) {
        			\tikz {
        				\path
        					(0, 0) coordinate (a) node[left=2pt] {$v_3$}
        					(-0.7, 1) coordinate (b) node [left=5pt, above] {$v_4$}
        					(0.7, 1) coordinate (c) node [right=5pt, below] {$v_2$}
        					(0, 2) coordinate (d) node [right=2pt] {$v_1$}
        					(0, 1) coordinate (z1)
        					(0, 1.35) coordinate (z2);
        				\path[simplex2]
        					foreach \x/\y/\z in {a/b/c} {(\x) -- (\y) -- (\z)};
        				\path[simplex1]
        					foreach \x/\y in {a/b, a/c, b/c, b/d, c/d} {(\x) edge (\y)};
        				\path[simplex0]
        					foreach \x in {a, b, c, d} {(\x) circle {}};
        			}
        		};
        		\\
        		\node[box] (m-2-1) {
        			\tikz {
        				\path
        					(0, 0) coordinate (a) node[left=2pt] {$v_3$}
        					(-0.7, 1) coordinate (b) node [left=5pt, above] {$v_4$}
        					(0.7, 1) coordinate (c) node [right=5pt, below] {$v_2$}
        					(0, 2) coordinate (d) node [right=2pt] {$v_1$}
        					(0, 1) coordinate (z1)
        					(0, 1.35) coordinate (z2);
        				\path[simplex1]
        					foreach \x/\y in {b/d, c/d} {(\x) edge (\y)};
        				\path[simplex0]
        					foreach \x in {a, b, c, d} {(\x) circle {}};
        			}
        		};
        		\&
        		\node[box] (m-2-2) {
        			\tikz {
        				\path
        					(0, 0) coordinate (a) node[left=2pt] {$v_3$}
        					(-0.7, 1) coordinate (b) node [left=5pt, above] {$v_4$}
        					(0.7, 1) coordinate (c) node [right=5pt, below] {$v_2$}
        					(0, 2) coordinate (d) node [right=2pt] {$v_1$}
        					(0, 1) coordinate (z1)
        					(0, 1.35) coordinate (z2);
        				\node at (z2) {$z_2$};
        				\draw[->] (z2) +(2.2mm, 1.35mm) arc[start angle=30, delta angle=300, radius=2.7mm];
        				\path[simplex1]
        					foreach \x/\y in {a/c, b/c, b/d, c/d} {(\x) edge (\y)};
        				\path[simplex0]
        					foreach \x in {a, b, c, d} {(\x) circle {}};
        			}
        		};
        		\&
        		\node[box] (m-2-3) {
        			\tikz {
        				\path
        					(0, 0) coordinate (a) node[left=2pt] {$v_3$}
        					(-0.7, 1) coordinate (b) node [left=5pt, above] {$v_4$}
        					(0.7, 1) coordinate (c) node [right=5pt, below] {$v_2$}
        					(0, 2) coordinate (d) node [right=2pt] {$v_1$}
        					(0, 1) coordinate (z1)
        					(0, 1.35) coordinate (z2);
        				\path[simplex1]
        					foreach \x/\y in {a/c, b/c, b/d, c/d} {(\x) edge (\y)};
        				\path[simplex0]
        					foreach \x in {a, b, c, d} {(\x) circle {}};
        			}
        		};
        		\\
        	};
        	\path[every node/.append style={anchor=north west, font=\small, color=gray}]
        		(m-1-1.north west) node {$(0, 1)$}
        		(m-1-2.north west) node {$(1, 1)$}
        		(m-1-3.north west) node {$(2, 1)$}
        		(m-2-1.north west) node {$(0, 0)$}
        		(m-2-2.north west) node {$(1, 0)$}
        		(m-2-3.north west) node {$(2, 0)$};
        	\path[every edge node/.style={}, every edge/.style={draw=none, edge node=node [every edge node] {$\subseteq$}}]
        		(m-1-1) edge (m-1-2)
        		(m-1-2) edge (m-1-3)
        		(m-2-1) edge (m-2-2)
        		(m-2-1) edge[every edge node/.style={rotate=90}] (m-1-1)
        		(m-2-2) edge (m-2-3)
        		(m-2-2) edge[every edge node/.style={rotate=90}] (m-1-2)
        		(m-2-3) edge[every edge node/.style={rotate=90}] (m-1-3);
        \end{tikzpicture}
        \caption{Bifiltration of simplicial complexes realizing the module $M$ from Example~\ref{ex:flange-1}.
        The bifiltration extends upwards and towards the right by filled tetrahedra, which are contractible.}
        \label{fig:flange-bifilt}
    \end{figure}
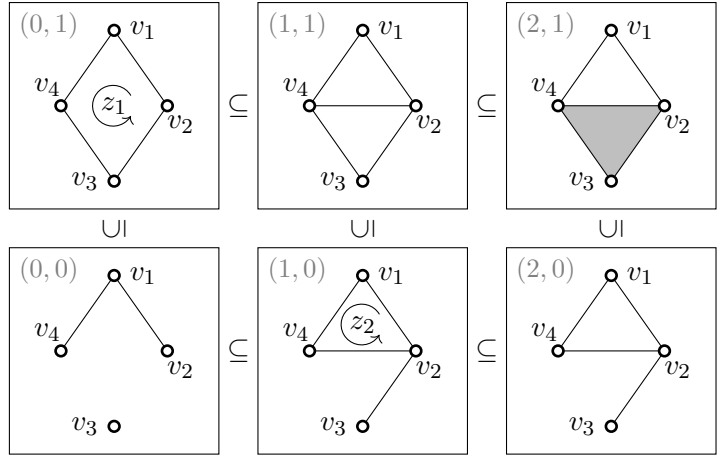
\end{example}

\section{Relative Gr\"obner basis theory}
\label{sect:rel-gb}

Throughout this section, let $\Bbbk$ be a field, and let $R\coloneqq \Bbbk[X_1, \dotsc, X_n]$ be the (ungraded) polynomial ring over $\Bbbk$ in $n$ indeterminates. We develop the theory of relative Gr\"{o}bner bases for subquotients of free modules $R^d$ by generalizing \cite[Section 3]{HashemiOrthSeiler2021}.

\subsection{Relative Gr\"obner bases}

In this section, we generalize the notion of Gröbner bases of submodules $V \subseteq R^d$ to pairs of submodules $U \subseteq V \subseteq R^d$, where $U$ acts as an annihilating submodule, meaning that we regard elements of $V$ modulo $U$, or equivalently, as their images in the quotient module $V / U$.
In particular, we define \emph{relative Gröbner bases} and state and prove a Buchberger-type criterion for relative Gröbner bases.

It is well-known under the term \emph{correspondence theorem} (also known as \emph{the fourth isomorphism theorem on modules}) that there is a one-to-one correspondence between the submodules of $R^d/U$ and the submodules $V \subseteq R^d$ satisfying $U \subseteq V$, see \cite{Jacobson1976} for instance.
In the following, we shall begin with the perspective from quotient modules $V / U$.
For each submodule $\bar{V}\subseteq R^d/U$, we write $\pi_U^{-1}(\bar{V})$ for the unique submodule $V$ of $R^d$ that contains $U$ and such that $\pi_U(V)=\bar{V}$, where $\pi_U$ is the canonical projection of the quotient $R^d/U$.

Before defining Gr\"{o}bner bases in this setting, we first need to clarify what we mean by leading monomials and coefficients in a quotient module $R^d/U$.
The definition is essentially the same as in the absolute case, with the vector space basis $\Mon(R^d)$ replaced by the sous-escalier of $U$ with respect to a given module monomial ordering.

\begin{definition}\label{def:LeadingMon_InQuotient}
    Let $U \subseteq R^d$ be a submodule and $\preceq$ a monomial ordering on $R^d$.
    \begin{enumerate}
        \item The set of \emph{monomials in $R^d/U$} is
        \[ \Mon_\prec(R^d/U) \coloneqq \Mon(R^d/U) = \{ \mathbf m + U \mid \mathbf m \in \Mon(R^d) \setminus \lm(U) \} \text. \]
        An element of $\Mon_\prec(R^d/U)$ is called a \emph{quotient monomial}.
        \item For non-zero $\mathbf f + U \in R^d/U$ the \emph{leading monomial} $\lm(\mathbf f + U)$ is defined as the leading term of the uniquely determined normal form for $\mathbf f$ modulo $U$.
        The \emph{leading coefficient} of $\lm(\mathbf f + U)$ is defined as the leading coefficient of $\NF(\mathbf f, U)$.
        \item For a submodule $\bar V \subseteq R^d/U$ define $\lm(\bar V)$ as the $R$-span of the leading monomials $\lm(\mathbf f + U)$ for all non-zero $\mathbf f + U \in \bar V$.
    \end{enumerate}
\end{definition}

This notion of monomial ordering directly gives rise to a definition of relative Gröbner bases for quotient modules $V / U$, where $U \subseteq V \subseteq R^d$ is a pair of submodules.

\begin{definition}\label{def:GB_ForNestedSubmodules}
    Let $U \subseteq R^d$ be a submodule and $\preceq$ a monomial ordering on $R^d$.

    \begin{enumerate}
        \item A finite generating set $H = \{ \mathbf h_1 + U, \dotsc, \mathbf h_t + U \}$ of $\bar  V$ is called a \emph{Gröbner basis} of $\bar V$ if for each module monomial $\mathbf m \in \lm(\bar V)$ there is an element $\mathbf h_i + U \in H$ such that $\lm(\mathbf h_i + U)$ divides $\mathbf m$.
        \item A Gröbner basis $H$ is \emph{minimal} if there exist no divisibility relations between the leading monomials of $H$.
        \item A minimal Gröbner basis $H$ is said to be \emph{reduced} if, for all $\mathbf h_i + U \in H$, $\lc(h_i + U) = 1$ and $\mathbf h_i - \lm(\mathbf h_i + U) + U$ is a $\Bbbk$-linear combination of monomials in $\Mon_\preceq(R^d/U) \setminus \lm(\bar V)$.
    \end{enumerate}
\end{definition}

We can use Algorithm~\ref{alg:RelDivAlgo} below, adapted from \cite[Algo. 1]{HashemiOrthSeiler2021} for the module case, to compute remainders of module elements with respect to a subset of $R^d$ disjoint from a given submodule $U\subseteq R^d$.

\begin{algorithm}
    \caption{Relative division algorithm}
    \label{alg:RelDivAlgo}
    \begin{algorithmic}[1]
        \Statex \textbf{Input:} A monomial ordering $\prec$, a submodule $U \subseteq R^d$, a Gröbner basis $G$ of $U$, a finite set $H = \{ \mathbf{h}_1, \dotsc, \mathbf{h}_r \} \subseteq R^d \setminus \{ \mathbf{0} \}$ with $\NF(\mathbf{h}_i, U) = \mathbf{h}_i$ for all $i$, an element $\mathbf{f} \in R^d$.
        \Statex \textbf{Output:} A module element $\mathbf{p} \in R^d$ with support disjoint from $\langle\lm(U), \lm(H)\rangle$, polynomials $q_1, \dotsc, q_r \in R$ with $\mathbf{f} - \mathbf{p} - \sum_{i=1}^r q_i \mathbf{h}_i \in U$
        \Function{RelativeDivision}{$G, H, \mathbf{f}$}
            \State $\tilde{\mathbf f} \leftarrow \mathbf f$; $\mathbf p \leftarrow 0$
            \For{$i = 1, \dotsc, r$}
                \State $q_i \leftarrow 0$
            \EndFor
            \While{$\tilde{\mathbf f} \neq 0$}
                \If{$\lm(\tilde{\mathbf f}) \in \langle\lm(G) \rangle$}
                    \State Choose $\mathbf g \in G$ with $\lm(\mathbf g) | \lm(\tilde{\mathbf f})$
                    \State $\tilde{\mathbf f} \leftarrow \tilde{\mathbf f} - \frac{\lt(\tilde{\mathbf f})}{\lt(\mathbf g)} g$
                \ElsIf{$\lm(\tilde{\mathbf f}) \in \langle\lm(H)\rangle$}
                    \State Choose $\mathbf h_i \in H$ with $\lm(\mathbf h_i) | \lm(\tilde{\mathbf f})$
                    \State $q_i \leftarrow q_i + \frac{\lt(\tilde{\mathbf f})}{\lt(\mathbf h_i)}$;  $\tilde{\mathbf f} \leftarrow \tilde{\mathbf f} - \frac{\lt(\tilde{\mathbf f})}{\lt(\mathbf h_i)} \mathbf h_i$
                \Else
                    \State $\mathbf p \leftarrow \mathbf p + \lt(\tilde{\mathbf f})$;  $\tilde{\mathbf f} \leftarrow \tilde{\mathbf f} - \lt(\tilde{\mathbf f})$
                \EndIf
            \EndWhile
            \State \Return $(\mathbf p, q_1, \dotsc, q_r)$
        \EndFunction
    \end{algorithmic}
\end{algorithm}

\begin{remark}\label{rem:RelDivRem}
    The support of the quotient polynomial $q_k\in R$ associated to $\mathbf{h}_k$ computed in Algorithm \ref{alg:RelDivAlgo} is contained in the order ideal $\Mon(R)\setminus (\lm(U):\lm(\mathbf{h}_k))$.
    Since $H\cup G$ need not be a Gr\"{o}bner basis of $\langle H \rangle +U$, the module element $\mathbf{p}$ in the output of Algorithm \ref{alg:RelDivAlgo} is not uniquely determined by the input, but depends on the choices of $\mathbf{g}$ and $\mathbf{h}_i$ in the various reduction steps.
\end{remark}

\begin{definition}\label{def:RelReduces}
    If $\mathbf{p}$ is a possible output of Algorithm~\ref{alg:RelDivAlgo} for input $\mathbf{f},H,U,\prec$, then we write $f\longrightarrow_{H,U,\prec}^{\ast} \mathbf{p}$ and say that $\mathbf{f}$ \emph{reduces to $\mathbf{p}$ with respect to $H$ modulo $U$}.
    We omit the reference to the ordering $\prec$ if no confusion can arise.
\end{definition}

\begin{definition}\label{def:RelGB}
    Let $U\subseteq V\subseteq R^d$ be submodules.
    \begin{enumerate}
        \item A finite subset $H \subseteq V \setminus U$ whose elements are supported on $\Mon(R^d)\setminus\lm(U)$ is said to be a \emph{Gröbner basis of $V$ relative to $U$} if $\langle \lm(H) \rangle + \lm(U) = \lm(V)$.
        \item A relative Gröbner basis $H$ is said to be \emph{minimal} if there are no divisibility relations among leading monomials of elements of $H$.
        \item A minimal relative Gröbner basis $H$ is said to be \emph{reduced} if, for all $\mathbf h \in H$, $\lc(\mathbf h) = 1$ and $\mathbf h - \lm(\mathbf h)$ is supported on $\Mon(R^d) \setminus \lm(V)$.
    \end{enumerate}
\end{definition}

Consider a reduced Gr\"{o}bner basis of $V\supseteq U$ relative to $U$.
It is a consequence of $\lm(U)\subseteq \lm(V)$ and of condition (ii) in Definition~\ref{def:RelGB} that $\NF(\mathbf{h},U)=\mathbf{h}$ for all $\mathbf{h}\in H$, i.e., in a reduced relative Gr\"{o}bner basis, all elements are given in normal form with respect to $U$.

\begin{remark}\label{rem:RelGB_QuotGB}
    It is not hard to see that $H$ is a Gr\"{o}bner basis of $V\supseteq U$ relative to $U$ if and only if $\{\mathbf{h}+U\mid \mathbf{h}\in H\}$ is a Gr\"{o}bner basis of $\bar{V}\coloneqq V/U$ in the sense of Definition~\ref{def:GB_ForNestedSubmodules}.
    Relative Gr\"{o}bner bases always exist, because any Gr\"{o}bner basis of $V$ is also a Gr\"{o}bner basis of $V$ relative to $U\subseteq V$.
\end{remark}

\begin{proposition}\label{prop:RelGBCompletionToClassicalGB}
    Let $H=\{\mathbf{h}_1,\dotsc ,\mathbf{h}_t\}\subseteq V$ be a finite set and $G$ a Gr\"{o}bner basis of $U\subseteq V$.
    Then the following statements are equivalent:
    \begin{enumerate}[label=(\roman*)]
        \item\label{RelGBCompletion(1)} $H$ is a Gr\"{o}bner basis of $V$ relative to $U$.
        \item\label{RelGBCompletion(2)} $H \cup G$ is a Gr\"{o}bner basis of $V$.
        \item\label{RelGBCompletion(3)} For every $\mathbf{f}\in V$, we have $\mathbf{f}\longrightarrow_{H,U,\prec}^{\ast} 0$.
        \item\label{RelGBCompletion(4)} Every $\mathbf{f}\in V$ has a {\em relative standard representation} of the form $\mathbf{f}=\mathbf{g}+\sum_{i=1}^t{q_i \mathbf{h}_i}$ where $\mathbf{g}\in U$ and $\lm(q_i \mathbf{h}_i)\preceq \lm(\mathbf{f})$ for each $i$ with $q_{i}\neq 0$.
    \end{enumerate}
\end{proposition}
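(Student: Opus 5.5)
I will prove the cycle (i)$\Rightarrow$(ii)$\Rightarrow$(iv)$\Rightarrow$(iii)$\Rightarrow$(i). Two standing facts are used throughout. Since $G$ is a Gröbner basis of $U$, we have $\lm(U)=\langle \lm(G)\rangle$. Since $U\subseteq V$ and $H\subseteq V$, the set $H\cup G$ lies in $V$ and $\langle H\cup G\rangle_R\subseteq V$.

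\emph{(i)$\Rightarrow$(ii).} By Definition~\ref{def:RelGB}, $\lm(V)=\langle\lm(H)\rangle+\lm(U)=\langle \lm(H\cup G)\rangle$. The classical fact (a subset of $V$ whose leading monomials generate $\lm(V)$ generates $V$) gives $\langle H\cup G\rangle_R=V$. The proof is by a minimal counterexample: take $\mathbf f\in V\setminus\langle H\cup G\rangle_R$ with minimal leading monomial, cancel its leading term by a suitable $\mathbf m\cdot\mathbf k$ with $\mathbf k\in H\cup G$, and obtain a smaller counterexample. Hence $H\cup G$ is a Gröbner basis of $V$.

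\emph{(ii)$\Rightarrow$(iv).} Since $H\cup G$ is a Gröbner basis of $V$, every $\mathbf f\in V$ has a classical standard representation $\mathbf f=\sum_i q_i\mathbf h_i+\sum_{\mathbf g\in G}p_{\mathbf g}\mathbf g$ in which every nonzero summand has leading monomial $\preceq\lm(\mathbf f)$. This follows from the ordinary division algorithm, or from the remark on standard representations. Setting $\mathbf g\coloneqq\sum p_{\mathbf g}\mathbf g\in U$ gives the relative standard representation; the bound on the $G$-part is not even needed.

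\emph{(iv)$\Rightarrow$(iii).} This step is the main obstacle, because the division algorithm makes choices and may prefer $G$ over $H$. I will run Algorithm~\ref{alg:RelDivAlgo} on $\mathbf f\in V$ and argue by induction along the loop. The invariant $\tilde{\mathbf f}\in V$ is preserved by the $G$- and $H$-reduction steps, since $G,H\subseteq V$. It remains to show that the else-branch never fires. Suppose $\tilde{\mathbf f}\neq0$ lies in $V$. Apply (iv) to $\tilde{\mathbf f}$: $\tilde{\mathbf f}=\mathbf g+\sum q_i\mathbf h_i$ with $\lm(q_i\mathbf h_i)\preceq\lm(\tilde{\mathbf f})$. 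Two cases arise. If some $q_i\mathbf h_i$ has leading monomial equal to $\lm(\tilde{\mathbf f})$, then $\lm(\mathbf h_i)$ divides $\lm(\tilde{\mathbf f})$. Otherwise $\lm(\tilde{\mathbf f})=\lm(\mathbf g)\in\lm(U)=\langle\lm(G)\rangle$. The key point is that the terms $q_i\mathbf h_i$ with leading monomial equal to $\lm(\tilde{\mathbf f})$ cannot all cancel without $\mathbf g$ supplying it. Concretely, compare coefficients at $\lm(\tilde{\mathbf f})$: if no $q_i\mathbf h_i$ reaches that monomial, then $\mathbf g$ must reach it. So $\lm(\tilde{\mathbf f})$ always lies in $\langle\lm(G)\rangle$ or in $\langle\lm(H)\rangle$, and the algorithm never adds to $\mathbf p$. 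Termination holds because leading monomials strictly decrease in a well-order, so the output is $\mathbf p=0$.

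\emph{(iii)$\Rightarrow$(i).} The inclusion $\langle\lm(H)\rangle+\lm(U)\subseteq\lm(V)$ is clear. For the reverse, take $\mathbf f\in V\setminus\{0\}$ and a reduction of $\mathbf f$ to $0$. The first step of this reduction is a $G$-step or an $H$-step, never an else-step, since $\mathbf p=0$ at the end and $\mathbf p$ only ever accumulates terms. Hence $\lm(\mathbf f)\in\langle\lm(G)\rangle+\langle\lm(H)\rangle=\lm(U)+\langle\lm(H)\rangle$. Finally, the definitional side conditions on $H$ (lying in $V\setminus U$ and being supported on the sous-escalier) should be assumed as part of the setup of $H$, as in Algorithm~\ref{alg:RelDivAlgo}. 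I would note this explicitly, or else read (i) as the leading-monomial condition.
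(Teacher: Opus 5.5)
Your proof is correct. It runs the cycle in a different order from the paper, which proves (i)$\Rightarrow$(ii)$\Rightarrow$(iii)$\Rightarrow$(iv)$\Rightarrow$(i).

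In the paper's order, (ii)$\Rightarrow$(iii) is a one-liner. Every intermediate $\tilde{\mathbf f}$ in Algorithm~\ref{alg:RelDivAlgo} lies in $V$, so $\lm(\tilde{\mathbf f})\in\lm(V)=\langle\lm(H\cup G)\rangle$, and the else-branch never fires. Then (iii)$\Rightarrow$(iv) just reads the relative standard representation off the algorithm's output. The only genuine argument is the coefficient comparison in (iv)$\Rightarrow$(i).

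Your (iv)$\Rightarrow$(iii) is in effect that same coefficient comparison, merged with (i)$\Rightarrow$(ii)$\Rightarrow$(iii) and repeated at every pass through the loop. That is why it felt like the main obstacle: going through (ii) first makes the algorithmic condition essentially free.

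In exchange, two of your steps are more direct than the paper's:
\begin{itemize}
\item Your (ii)$\Rightarrow$(iv) takes a classical standard representation with respect to $H\cup G$ and absorbs the $G$-part into $\mathbf g$.
\item Your (iii)$\Rightarrow$(i) observes that the first step of a zero-remainder run cannot be an else-step. Here you should say explicitly why: once an else-step fires, the term $\lt(\tilde{\mathbf f})$ added to $\mathbf p$ can never cancel, because every later addition has a strictly smaller monomial.
\end{itemize}
Your argument also shows that every run reduces to $0$, while (iii)$\Rightarrow$(i) uses only one such run. So it holds under either reading of ``possible output'' in Definition~\ref{def:RelReduces}.

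Your version is more complete than the paper's in two places it leaves implicit. First, in (i)$\Rightarrow$(ii) you check $\langle H\cup G\rangle_R=V$, not just the leading-monomial condition. Second, you rightly note that (iii) or (iv) can only give the leading-monomial part of Definition~\ref{def:RelGB}. The side conditions must therefore be assumed in the setup: $H\subseteq V\setminus U$ and $H$ in normal form modulo $U$, which Algorithm~\ref{alg:RelDivAlgo} also requires of its input. The paper's (iv)$\Rightarrow$(i) makes this same assumption tacitly.
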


\begin{proof}
    We show that \ref{RelGBCompletion(1)} $\Longrightarrow$ \ref{RelGBCompletion(2)} $\Longrightarrow$ \ref{RelGBCompletion(3)} $\Longrightarrow$ \ref{RelGBCompletion(4)} $\Longrightarrow$ \ref{RelGBCompletion(1)}.

    Assume \ref{RelGBCompletion(1)} holds.
    Then for each $\mathbf{m}\in\lm(V)\setminus \lm(U)$, there is an element $\mathbf{h}_i\in H$ such that $\lm(\mathbf{h}_i)$ divides $\mathbf{m}$.
    Moreover, for each $\mathbf{m}\in\lm(U)$, there is an element $\mathbf{g}\in G$ such that $\lm(\mathbf{g})$ divides $\mathbf{m}$.
    Thus, $H\cup G$ is a Gr\"{o}bner basis of $V$ and \ref{RelGBCompletion(2)} holds.

    Assume \ref{RelGBCompletion(2)} holds.
    Then, as each non-zero instance of $\tilde{\mathbf{f}}$ in Algorithm~\ref{alg:RelDivAlgo} applied to $\mathbf{f}$, $H$, and $G$ is an element of $V$, its leading monomial is in $\lm(V)=\langle \lm(H)\cup\lm(G) \rangle$.
    Thus $\mathbf{p}=\mathbf{0}$ will hold throughout the algorithm, showing that \ref{RelGBCompletion(3)} holds.

    Assume \ref{RelGBCompletion(3)} holds and let $\mathbf{f}\in V$.
    Then the output of Algorithm~\ref{alg:RelDivAlgo} applied to $\mathbf{f}$, $H$, and $G$ is $(\mathbf{0},q_1,\dotsc,q_t)$ such that $\mathbf{f}-\sum_{i=1}^r q_i \mathbf{h}_i\in U$.
    The relations $\lm(q_i \mathbf{h}_i)\preceq \lm(\mathbf{f})$ hold by the structure of the algorithm.
    Thus, \ref{RelGBCompletion(4)} holds.

    Assume \ref{RelGBCompletion(4)} holds.
    Let $\mathbf{f}\in V$ be a module element such that $\lm(\mathbf{f})\notin \lm(U)$.
    We may write $\mathbf{f}=\mathbf{g}+\sum_{i=1}^t{q_i \mathbf{h}_i}$ where $\mathbf{g}\in U$ and $\lm(q_i \mathbf{h}_i)\preceq \lm(\mathbf{f})$.
    As $\lm(\mathbf{f})\notin \lm(U)$, we obtain $\lm(\mathbf{g})\prec \lm(\mathbf{f})$.
    Thus there must exist at least one index $i$ such that $\lm(q_i\mathbf{h}_i)=\lm(\mathbf{f})$, implying $\lm(\mathbf{h}_i)|\lm(\mathbf{f})$.
    Thus, \ref{RelGBCompletion(1)} holds.
\end{proof}

We can thus use the Buchberger algorithm in $R^d$ to compute relative Gr\"{o}bner bases.
In this context, we mean the Buchberger algorithm that performs full reductions of all $S$-polynomials with respect to the current candidate set and does not delete any polynomials that have been added to that candidate set.

\begin{proposition}\label{prop:RelGB_BBAlgo}
    Let $U\subseteq V\subseteq R^d$ be two submodules and $F$ a finite generating set of $V$.
    Further, let $G$ be a Gr\"{o}bner basis of $U$ and call $H_V$ the Gr\"{o}bner basis of $V$ obtained from the set $(\NF(F,U)\setminus U) \cup G$ via Buchberger's algorithm.
    Then $H\coloneqq H_V\setminus U$ is a Gr\"{o}bner basis of $V$ relative to $U$.
\end{proposition}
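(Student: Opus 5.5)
We verify the conditions of Definition~\ref{def:RelGB} for $H \coloneqq H_V \setminus U$, reducing the key condition to Proposition~\ref{prop:RelGBCompletionToClassicalGB}. First, $H_V$ is indeed a Gröbner basis of $V$. The input set $(\NF(F,U)\setminus U)\cup G$ lies in $V$, because $\NF(\mathbf f,U)-\mathbf f\in U\subseteq V$ for every $\mathbf f\in F$. It also generates $V$: each $\mathbf f\in F$ equals $\NF(\mathbf f,U)$ plus an element of $\langle G\rangle_R=U$, and elements $\NF(\mathbf f,U)\in U$ can be discarded since $G$ already generates them. Buchberger's algorithm only adjoins reduced $S$-polynomials, which stay in $V$, and it deletes nothing. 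Hence $H_V\subseteq V$ is a Gröbner basis of $V$ with $G\subseteq H_V$.

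Second, we show $\langle\lm(H)\rangle+\lm(U)=\lm(V)$. Since $H_V = H\cup (H_V\cap U)$ and $\lm(H_V\cap U)\subseteq\lm(U)$, we get
\[ \lm(V)=\langle\lm(H_V)\rangle\subseteq\langle\lm(H)\rangle+\lm(U)\subseteq\lm(V). \]
Equivalently, $H\cup G$ generates $V$: $H_V\cap U\subseteq\langle G\rangle_R$ and $H_V\subseteq H\cup(H_V\cap U)$, so Proposition~\ref{prop:RelGBCompletionToClassicalGB}\,(ii)$\Rightarrow$(i) applies to the leading-monomial condition. The requirement $H\subseteq V\setminus U$ is immediate from the definition of $H$.

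Third, and this is the main obstacle, Definition~\ref{def:RelGB} demands that every $\mathbf h\in H$ be supported on $\Mon(R^d)\setminus\lm(U)$. The plan is an induction over the run of the algorithm with the invariant that every candidate not in $G$ is in normal form modulo $U$. This holds initially by construction. For the inductive step, a newly adjoined element is the remainder of an $S$-polynomial after full reduction by the current candidate set. That set contains the Gröbner basis $G$ of $U$, so no term of the remainder is divisible by any $\lm(\mathbf g)$ with $\mathbf g\in G$. Since $\langle\lm(G)\rangle=\lm(U)$, the remainder is supported outside $\lm(U)$. Here the phrase ``performs full reductions'' in the convention preceding the statement is essential: with only top-reduction the tail could contain monomials of $\lm(U)$. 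If a new element happens to lie in $U$, it cannot be in normal form modulo $U$ unless it is zero, so it is never a nonzero element of $U$. Consequently $H_V\setminus U = H_V\setminus G$ up to zero, which also resolves any ambiguity in the set difference.
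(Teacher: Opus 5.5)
Your proof is correct and takes the same route as the paper: the input set still generates $V$, full reduction against a candidate set containing $G$ keeps every adjoined element in normal form modulo $U$ (so $H_V = H \cup G$), and the leading-monomial condition then follows because $H_V$ is a Gröbner basis of $V$. Your version is more explicit, for instance in the chain of inclusions and the observation that a nonzero normal form cannot lie in $U$. One small point: your aside invoking Proposition~\ref{prop:RelGBCompletionToClassicalGB}\,(ii) needs $H\cup G$ to be a Gröbner basis of $V$, not merely a generating set; this holds because $H\cup G = H_V$, and in any case your displayed inclusions already settle that condition directly.
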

\begin{proof}
    First note that, since $G$ generates $U$, the set $(\NF(F,U)\setminus U)\cup G$ still generates $V$.
    As $G$ is a subset of the input set for Buchberger's algorithm, all elements of $U$ reduce to zero with respect to $G$.
    Thus the algorithm will only add elements of $V\setminus U$ to the candidate set, and all added elements will be in normal form with respect to $U$ and the chosen monomial ordering.
    Thus, the output $H_V$ can be written as $H\cup G$, where $H\coloneqq H_V\setminus U$, and for each $\mathbf{m}\in\lm(V)\setminus\lm(U)$ there is an element $\mathbf{h}\in H$ such that $\lm(\mathbf{h})$ divides $\mathbf{m}$.
    Thus, $H$ is a Gr\"{o}bner basis of $V$ relative to $U$.
\end{proof}

\TheoremA*

\begin{proof}
    By Proposition~\ref{prop:RelGB_BBAlgo}, there exists a Gr\"{o}bner basis of $V$ relative to $U$.
    By a complete interreduction and normalization of the elements of that basis (in intermediate steps reducing elements to their normal form with respect to $U$), we obtain a reduced relative Gr\"{o}bner basis.
    Now let $H$ and $H'$ be two reduced Gr\"{o}bner bases of $V$ relative to $U$.
    Then for each minimal generator $\mathbf{m}$ of $\lm(V)$ that is not contained in $\lm(U)$ there is exactly one element $\mathbf{h}_{\mathbf{m}}\in H$ such that $\lm(\mathbf{h}_{\mathbf{m}})=\mathbf{m}$ and exactly one element $\mathbf{h}'_{\mathbf{m}}\in H'$ such that $\lm(\mathbf{h}'_{\mathbf{m}})=\mathbf{m}$.
    This induces a bijection $H\to H',\, \mathbf{h}_{\mathbf{m}}\mapsto \mathbf{h}'_{\mathbf{m}}$.
    Moreover, as leading coefficients are $1$ in a reduced relative Gr\"{o}bner basis, the element $\mathbf{d}_{\mathbf{m}}\coloneqq\mathbf{h}_{\mathbf{m}}-\mathbf{h}'_{\mathbf{m}}\in V$ is supported on module monomials outside $\lm(V)$, implying $\mathbf{d}_{\mathbf{m}}=\mathbf{0}$.
    Hence, $H=H'$, i.e., uniqueness.
\end{proof}

\begin{remark}\label{rem:Furthermore_in_Thm_A}
    If $G_V$ is a Gr\"{o}bner basis of $V$, then $H\coloneqq \NF(G_V,U)\setminus \{\mathbf{0}\}$, the normal form of $G_V$ relative to $U$, is a Gr\"{o}bner basis of $V$ relative to $U$. Indeed, for each minimal generator $\mathbf{m}$ of $\lm(V)$ that is not contained in $\lm(U)$ there is an element $\mathbf{g}_{\mathbf{m}}\in G_V$ with $\lm(\mathbf{g}_\mathbf{m})=\mathbf{m}$.
    As $\mathbf{m}\notin \lm(U)$, taking the normal form does not change the leading monomial. In particular, $\mathbf{g}'_{\mathbf{m}}\neq \mathbf{0}$.
    Thus, $H$ contains $\mathbf{g}'_{\mathbf{m}}$, and thus it is a relative Gr\"{o}bner basis. If $G_V$ is reduced, then $H$ is the reduced Gr\"{o}bner basis of $V$ relative to $U$.
\end{remark}

\begin{remark}\label{rem:complexity_rel_GB}
    The complexity of computing relative Gr\"{o}ner bases is governed by the complexity of computing classical Gr\"{o}bner bases. Computing a Gr\"{o}bner basis of $V$ relative to $U$ amounts to a stepwise computation of a Gr\"{o}bner basis of $V$, where a Gr\"{o}bner basis of $U$ is obtained in an intermediate step.
\end{remark}

Assuming that the modules $U$ and $V$ are homogeneous, for each of the modules $U$, $V$, and $V/U$ respectively, all minimal generating sets have the same cardinality.
However, it is important to distinguish between the concepts of minimal generating sets and reduced relative Gr\"{o}bner bases, even if the module $U$ is monomial, as illustrated in the following example.

\begin{example}\label{ex:ReducedRelGB_NotMinimalGens}
    Let $R=\Bbbk[X,Y]$, and use the degree reverse lexicographic monomial ordering with $X\prec Y$.
    Consider the submodules $U\subseteq V\subseteq R^1=R$, where
    \[ U\coloneqq\langle X,Y \rangle^5 \text, \]
    and
    \[ V\coloneqq U+\langle Y^3,XY^2 +X^3 \rangle \text. \]
    Then $A\coloneqq\{Y^3+U,XY^2 +X^3+U\}$ is a minimal generating set of $V/U$, but an application of Buchberger's algorithm to $A\cup \{X^5,X^4 Y,\dotsc,Y^5\}$ shows that the reduced Gr\"{o}bner basis of $V/U$ is
    \[ H\coloneqq A\cup\{ X^3Y+U \} \text. \]
\end{example}
When all  modules considered are monomial, we have the following
result.
\begin{proposition}\label{prop:MinGen<->RedRelGB_ForMonomial_UandV}
    Let $U\subseteq V\subseteq R^d$ be monomial submodules of $R^d$.
    Then the reduced Gr\"{o}bner basis of $V/U$ is a minimal generating set of $V/U$.
\end{proposition}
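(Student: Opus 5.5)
First I will identify the reduced relative Gröbner basis explicitly. For monomial $U\subseteq V$, we have $\lm(U)=U$ and $\lm(V)=V$. Hence $\Mon(R^d)\setminus\lm(U)$ is exactly the set of module monomials not in $U$. Let $M_V$ be the minimal monomial generating set of $V$, i.e.\ its universal reduced Gröbner basis, and set $H\coloneqq M_V\setminus U$. I claim that $H$ is the reduced Gröbner basis of $V$ relative to $U$. Its elements are monomials, so they are supported on $\Mon(R^d)\setminus\lm(U)$ and have leading coefficient $1$. Moreover, $\langle H\rangle+\lm(U)\supseteq\langle M_V\rangle=V=\lm(V)$, because every element of $M_V\cap U$ already lies in $\lm(U)$. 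There are no divisibility relations among the elements of $H$, since $M_V$ is minimal. Finally, $\mathbf h-\lm(\mathbf h)=0$ for each $\mathbf h\in H$. By the uniqueness part of Theorem~\ref{thm:Uniqueness_And_Correspondence_Via_NF:intro} (equivalently, Remark~\ref{rem:Furthermore_in_Thm_A} applied to $G_V=M_V$), $H$ is therefore \emph{the} reduced relative Gröbner basis.

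Next I show that $\{\mathbf h+U\mid \mathbf h\in H\}$ generates $V/U$. This is immediate, because $V=\langle M_V\rangle$ and the generators in $M_V\cap U$ vanish in the quotient.

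The main step is minimality. Suppose some $\mathbf m\in H$ satisfies $\mathbf m+U\in\langle \mathbf h+U\mid \mathbf h\in H\setminus\{\mathbf m\}\rangle$. Then we can write $\mathbf m=\sum_{\mathbf h\neq\mathbf m} q_{\mathbf h}\mathbf h+\mathbf u$ with $\mathbf u\in U$. Since all modules involved are monomial, they are spanned over $\Bbbk$ by monomials, so I may compare the coefficient of the monomial $\mathbf m$ on both sides. The module $\langle H\setminus\{\mathbf m\}\rangle+U$ is monomial, hence it contains a monomial exactly when that monomial lies in it. So $\mathbf m\in \langle H\setminus\{\mathbf m\}\rangle+U$, and therefore $\mathbf m$ is divisible either by some $\mathbf h\in H\setminus\{\mathbf m\}$ or by a monomial generator of $U$. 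The first case contradicts the minimality of $M_V$. The second case gives $\mathbf m\in U$, which contradicts $\mathbf m\notin U$.

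Thus no element of $H$ can be omitted, so $H$ is an irredundant generating set of $V/U$. To upgrade this to a \emph{minimal} generating set in the strong sense (cardinality equal to that of every minimal generating set), I will invoke the graded Nakayama lemma. Monomial modules are $\mathbb Z^n$-graded, so for the homogeneous module $V/U$ irredundant homogeneous generating sets are minimal, with cardinality $\dim_\Bbbk (V/U)/\mathfrak m(V/U)$.

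The step I expect to be most delicate is this final identification. I will check directly that the images of $H$ are $\Bbbk$-linearly independent in $(V/U)/\mathfrak m(V/U)$. Take a relation with coefficients in $\Bbbk$. Comparing monomials as above, some $\mathbf m\in H$ would have to lie in $\mathfrak m\langle H\rangle+U$, and this forces a proper divisibility from another element of $H$ or membership in $U$. Both options were ruled out above.
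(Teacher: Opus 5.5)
Your proof is correct and is essentially the paper's argument: the paper notes that the minimal monomial generating set of $V/U$ (the minimal monomial generators of $V$ lying outside $U$) is automatically a reduced Gr\"obner basis, hence by uniqueness the reduced one, and this matches your identification $H=M_V\setminus U$. The only difference is that you also prove, via monomial membership and the graded Nakayama lemma, that this set is a minimal generating set of $V/U$, a step the paper takes for granted.
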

\begin{proof}
    Any set of monomials in $R^d$ is a Gr\"{o}bner basis of the submodule it generates.
    Thus the minimal monomial generating set of $V/U$ is also a Gr\"{o}bner basis, which is clearly reduced.
\end{proof}

\subsection{Relative Schreyer's presentations}%
\label{sect:rel-free}

In this section, we establish a generalized Schreyer's theorem for constructing a free presentation for a submodule $M=V/U$ of $R^d/U$, and by iterating, a method to obtain a free resolution for $M$. While we formulate our constructions in the $n$-graded setting, the results of this subsection are also valid for non-graded $R$-modules if all statements about gradings are ignored, see also Remark~\ref{rem:triv-grad}.

Let $U\subseteq V\subseteq R^d$ be nested submodules of $R^d$.
Then $V/U$ is an $R$-module via the structure map
\[
R\times (V/U)\ni(r,\mathbf{f}+U)\mapsto (r\mathbf{f})+U \text.
\]

\begin{example}\label{ex:FreeCover}
    Consider $R=\Bbbk[X]$, the polynomial ring in one variable.
    In the rank $1$ free module $R^1$, consider the standard graded monomial submodules $U\coloneqq\langle X^2\rangle \subseteq \langle X\rangle\eqqcolon V$.
    Then we obtain the free cover $\Sigma ^1 R\to V/U$, $\mathbf{e}_1\mapsto X+ U$ (we interpret this as a homomorphism of $R$-modules).
    The kernel is $\langle X\rangle$, a free $R$-module.
    This gives a free resolution $0\to \Sigma^2 R\to \Sigma^1 R\to V/U\to 0$.
\end{example}

From a graded Gr\"{o}bner basis $G=\{\mathbf{g}_1,\dotsc,\mathbf{g}_s\}$ of $V$ relative to $U$, we can obtain a Gr\"{o}bner basis of the kernel of the presentation map
\[ \bigoplus_{i=1}^s \Sigma^{\mdeg \mathbf{g}_i} R\to V/U, \quad \mathbf{e}_i\mapsto \mathbf{g}_i+U \text, \]
which is a homomorphism of $R$-modules.
To compute the kernel, we use a generalization of Schreyer's construction, inspired by the works~\cite{LaScalaStillman1998,Schreyer1980MastersThesis,EroecalEtAl2016Syzygies}. We need the following notation.

\begin{definition}\label{def:Projection}
    For an index set $I\subseteq \{1,\dotsc,d\}$, write $\pi_I$ for the $R$-linear projection mapping
    \[ \pi_I\colon R^d \longrightarrow R^I\text, \quad \sum_{i=1}^d f_i \mathbf{e}_i\longmapsto \sum_{i\in I} f_i \mathbf{e}_i\; \text. \]
\end{definition}

\begin{definition}\label{def:ProjectionOfSyzygies}
    Let $U\subseteq V\subseteq R^d$ be graded submodules of $R^d$, let $G_V=\{\mathbf{g}_1,\dotsc,\mathbf{g}_t\}$ be a Gr\"{o}bner basis of $V$ relative to $U$ and let $G_U=\{\mathbf{g}_{t+1},\dotsc,\mathbf{g}_s\}$ be a Gr\"{o}bner basis of $U$.
    Let $G\coloneqq G_V\cup G_U$ and let $G^{(1)}=\{\mathbf{S}_{ij}\mid 1\leq i<j\leq s\}$ be the Gr\"{o}bner basis of $\Syz(G)$ with respect to $\prec_G$ obtained by Schreyer's construction (Theorem~\ref{thm:SchreyerConstruction}), using Algorithm~\ref{alg:RelDivAlgo} for standard representations.
    Then we write
    \begin{equation}\label{eq:ProjectedSyzygies}
        \overline{\mathbf{S}_{ij}}\coloneqq\pi_{\{1,\dotsc,t\}}(\mathbf{S}_{ij})
    \end{equation}
    and $H^{(1)}\coloneqq\{ \overline{\mathbf{S}_{ij}}\mid 1\leq i <j\leq s\;\wedge\; i\leq t \}$.
\end{definition}

The set $H^{(1)}$ is the relative analog to the Gr\"{o}bner basis $G^{(1)}$ of the first syzygy module for a Gr\"{o}bner basis $G$.

\TheoremB*

\begin{proof}
    Let $\{\mathbf{h}_1,\dotsc,\mathbf{h}_t\}$ be the reduced Gr\"{o}bner basis of $V$ relative to $U$ and write $H=\{\mathbf{h}_i+U\mid 1\leq i\leq t\}$. It suffices to show that $H^{(1)}\subset R^t$ is a Gr\"{o}bner basis of the $R$-module $\Syz(H)$ because then Gr\"{o}bner bases $H^{(i)}$, $i\geq 2$ are obtained using standard techniques for module Gr\"{o}bner bases.
    Consider the presentation map $\psi\colon \bigoplus_{i=1}^t \Sigma^{\mdeg \mathbf{h}_i} R\to V/U$, $\mathbf{e}_i\mapsto \mathbf{h}_i+U$, which is an $n$-graded homomorphism of $R$-modules.
    Then, we claim that a Gr\"{o}bner basis of $\ker(\psi)$ is given by $H^{(1)}$ as in Equation~\eqref{eq:ProjectedSyzygies}, where this set is obtained by appending a Gr\"{o}bner basis $G_U=\{\mathbf{g}_{t+1},\dotsc,\mathbf{g}_s\}$ of $U$ to $G_V\coloneqq\{\mathbf{h}_1,\dotsc,\mathbf{h}_t\}$ to obtain $G\coloneqq G_V\cup G_U$ and then applying the construction of Definition~\ref{def:ProjectionOfSyzygies} to $G$.

    For a syzygy $\mathbf{S}=\sum_{i=1}^t f_i\cdot \mathbf{e}_i\in \Syz(H)$, note that $\mathbf{f}\coloneqq\sum_{i=1}^t f_i \mathbf{h}_i \in U$.
    There is a standard representation $\mathbf{f}=\sum_{j=1}^{s-t} f_j \mathbf{g}_{t+j}$, and then $\mathbf{T}\coloneqq\sum_{i=1}^s f_i\mathbf{e}_i+ \sum_{j=1}^{s-t} g_j \mathbf{e}_{t+j}\in \Syz(G)$ is a syzygy with $\lm_{\prec_G}(\mathbf{T})=\lm_{\prec_H}(\mathbf{S})$.
    Thus $\lm_{\prec_H}(\Syz(H))\subseteq \lm_{\prec_G}(\Syz(G))$; more precisely, for each $\mathbf{S}\in\Syz(H)$ there is a syzygy $\mathbf{S}_{ij}\in G^{(1)}$ with $i\leq t$ such that $\lm(\mathbf{S}_{ij})|\lm(\mathbf{S})$.

    Conversely, let $\mathbf{S}_{ij}\in G^{(1)}$ with $i\leq t$ be any syzygy.
    It is of the form $\mathbf{S}_{ij}=\sum_{i=1}^s f_i\mathbf{e}_i+ \sum_{j=1}^{s-t} g_j \mathbf{e}_{t+j}$ and its leading monomial is supported on the $i$-th module component.
    Since $\sum_{j=1}^{s-t} g_j \mathbf{g}_{t+j}\in U$, we have that $\pi_{\{1,\dotsc,t\}}(\mathbf{S}_{ij})\in \Syz(H)$.
    Thus $\lm(\pi_{\{1,\dotsc,t\}}(G^{(1)}))$ generates $\lm(\Syz(H))$.
    As $\pi_{\{1,\dotsc,t\}}(G^{(1)})=H^{(1)}$, we are done.
\end{proof}

\begin{example}\label{Ex:Non_minimality_Syzygy_GB}
    The relative Gr\"{o}bner basis $H^{(1)}$ is in general not minimal, let alone reduced. For example, consider the $3$-graded submodules $U=\{0\}\subseteq \langle XY,YZ,XZ \rangle=V\subseteq R^{1}$, where $R=\Bbbk[X,Y,Z]$.
    Set $H=\{XY,YZ,XZ\}$.
    Then the three vectors $\mathbf{S}_{12}=Z\mathbf{e}_1-X\mathbf{e_2}$, $\mathbf{S}_{13}=Z\mathbf{e}_1-Y\mathbf{e_3}$, and $\mathbf{S}_{23}=X\mathbf{e}_2-Y\mathbf{e_3}$ form a non-minimal Gr\"{o}bner basis of $\Syz(H)$.
    Deleting $\mathbf{S}_{13}$, we obtain a minimal Gr\"{o}bner basis $\{ \mathbf{S}_{12},\mathbf{S}_{23} \}$, which is not reduced, because the tail term $-X\mathbf{e}_2$ of $\mathbf{S}_{12}$ is divisible by $\lm(\mathbf{S}_{23})$.
    An interreduction yields the reduced Gr\"{o}bner basis $\{ \mathbf{S}_{12}+\mathbf{S}_{23},\mathbf{S}_{23}\}=\{Z\mathbf{e}_1-Y\mathbf{e_3},X\mathbf{e}_2-Y\mathbf{e_3}\}$.
    Note that we could also have arrived at the reduced Gr\"{o}bner basis by deleting $\mathbf{S}_{12}$ instead of $\mathbf{S}_{13}$ during minimization.
\end{example}

We thus have a procedure for obtaining a Gr\"{o}bner basis of the kernel of a presentation $\psi$ of $V/U$.
As $\ker \psi$ is an $R$-submodule and we have a Gr\"{o}bner basis $H^{(1)}$ of it, we can use Schreyer's construction to compute a Gr\"{o}bner basis $H^{(2)}$ for the kernel of a presentation of $\ker(\psi)$.
By iterating this construction, we obtain a free resolution of $V/U$ over $R$.

\begin{remark}\label{rem:Minimization_of_syzygy_GB}
    Under the hypotheses of Theorem~\ref{thm:KernelOfPresentation:intro}, one can extract a minimal relative Gr\"{o}bner basis of $\Syz(H)$ from $H^{(1)}$ defined by taking a subset $H_{\min}^{(1)}\subseteq H^{(1)}$ such that the set of leading monomials $\{\lm(\mathbf{v})\mid \mathbf{v}\in H_{\min}^{(1)}\}$ minimally generates $\lm(\Syz(H))$.
\end{remark}

\section{Multigraded presentations and resolutions}%
\label{sect:mgraded-pres}

In the previous sections, we described a method for computing a presentation of the $R$-submodule $V/U$ of $R^d/U$.
This method extends naturally to the multigraded setting by replacing the free module $R^d$ with a multigraded free module $F$. In this section, we focus on the multigraded module setting.

In the first subsection, we introduce the monomialization operation that associates to each $n$-graded $R$-module supported on $\mathbb{N}^n$, an $R$-module in such a way  that every shifted copy $\Sigma^a R$ of $R$ corresponds to the principal ideal $\langle X^a\rangle $ of $R$.

In the second subsection, we develop a general procedure for computing a presentation of the homology module, $\ker g/\im f$, of any chain complex $L\xrightarrow{f}M\xrightarrow{g}N$ of torsion-free $n$-graded modules. As a special case, this construction  produces a presentation of $V/U\subseteq F/U$, viewed as the homology of the complex of torsion free modules, $U \hookrightarrow V \hookrightarrow F$.
Computing a presentation of homology is the first step toward constructing a free resolution of homology: by iteratively applying the same method to the syzygy modules of the presentation, one ultimately obtains a multigraded free resolution of the homology module.

In the third subsection, we focus on the special case where $F/U$ is a cofree module\footnote{Recall
that a cofree module is a finitely cogenerated injective module.} and we obtain an algorithm for computing a presentation--and thus a resolution by iterating the procedure--of any submodule $V/U$ of a cofree module $F/U$.

\subsection{Monomialization of multigraded modules}%
\label{ssect:monomialization}

Let $\alpha_1, \dotsc, \alpha_j \in \mathbb Z^n$.
We consider submodules $N$ of the free $R$-module $\bigoplus_{i=1}^d \Sigma^{\alpha_j} R$.
In order to apply Gröbner bases and relative Gröbner bases to the module $N$ and its submodules, it is necessary to analytically express $N$ as a submodule of $R = \bigoplus_{j=1}^d R$ without changing any module-theoretic properties of $N$.
For this purpose, we introduce the so-called \emph{monomialization of $N$}, denoted by $N^{\mon}$, as follows.

For an exponent vector $\alpha \in \mathbb N^n$ we consider the monomialization map
\[ \mu_\alpha\colon \Sigma^{\alpha} R \longrightarrow R \text, \quad f \longmapsto X^{\alpha} \cdot f \text. \]
The map $\mu_a$ is a graded monomorphism of degree $0$, which induces an isomorphism $\Sigma^\alpha R \cong (X^\alpha)$.
Further, for a sequence $A = \alpha_1, \dotsc, \alpha_d \in \mathbb N^n$, let $\mu_A\colon \bigoplus_{j=1}^d \Sigma^{\alpha_j} R \to R^d$ be the map defined by
\[ \mu_A(f_1, \dotsc, f_d) \coloneqq (\mu_{\alpha_1}(f_1), \dotsc, \mu_{\alpha_t}(f_d)) \text, \qquad \text{for $(f_1, \dotsc, f_d) \in \bigoplus_{j=1}^d \Sigma^{\alpha_j} R$.} \]

\begin{definition}\label{def:Monomialization}
    For an element $\mathbf f = (f_1, \dotsc, f_d) \in \bigoplus_{j=1}^d \Sigma^{\alpha_j} R$, the image $\mathbf f^{\mon} \coloneqq \mu_A(\mathbf f)$ is called the \emph{monomialization of~$\mathbf f$}.

    Further, for a graded submodule $N \subseteq \bigoplus_{i=1}^d \Sigma^{\alpha_j} R$ the image of $N$ under the monomialization map $\mu_A$ is denoted by $N^{\mon}$ and called the \emph{monomialization of~$N$}.
\end{definition}

\begin{lemma}\label{lem:GeneratingSets_Monomialization}
    Suppose the submodule $N \subseteq \bigoplus_{i=1}^d \Sigma^{\alpha_i} R$ is generated by $\mathbf f_1, \dotsc, \mathbf f_t$.
    Then $N^{\mon}$ is generated by $\mathbf f_1^{\mon}, \dotsc, \mathbf f_t^{\mon}$.
    \begin{proof}
        The monomialization map $\mu_A$ restricts to an isomorphism $N \cong N^{\mon}$ by definition.
    \end{proof}
\end{lemma}

\begin{example}
    We reconsider the module~$M$ from Example~\ref{ex:flange-1}.
    Notice that $M$ can be realized as a quotient module $V/U$ for a chain of submodules $U \subseteq V \subseteq F$ where
    \[ F \coloneqq \Sigma^{(1, 0)} R \oplus \Sigma^{(0, 1)} R \oplus \Sigma^{(2, 0)} R \oplus \Sigma^{(1, 1)} R^2 \oplus \Sigma^{(2, 1)} R \]
    is a graded free module with standard basis $\mathbf e_1, \dotsc, \mathbf e_6$.
    Then we can choose generators of $V$ as
    \begin{align*}
        \mathbf {g}_1 &\coloneqq \mathbf {e}_1 + \mathbf {e}_3 + \mathbf {e}_4 + \mathbf {e}_6 \text, &\quad \mathbf {g}_2 &\coloneqq \mathbf {e}_2 + \mathbf {e}_5 + \mathbf {e}_6 \text, \\
        \mathbf {g}_3 &\coloneqq \mathbf {e}_3 + \mathbf {e}_6 \text, &\quad \mathbf {g}_4 &\coloneqq \mathbf {e}_4 + \mathbf {e}_6 \text, \\
        \mathbf {g}_5 &\coloneqq \mathbf {e}_5 + \mathbf {e}_6 \text, &\quad \mathbf {g}_6 &\coloneqq \mathbf {e}_6\text.
    \end{align*}
    Application of the monomialization to the submodule $V$ and to the generators $\mathbf g_1, \dotsc, \mathbf g_6$ gives the submodule $V^{\mon} \subseteq R^6$ with generators
    \begin{align*}
        \mathbf g_1^{\mon} &= X_1 (\mathbf e_1 + \mathbf e_3 + \mathbf e_4 + \mathbf e_6) \text, &\quad
        \mathbf g_2^{\mon} &= X_2 (\mathbf e_2 + \mathbf e_5 + \mathbf e_6) \text, \\
        \mathbf g_3^{\mon} &= X_1^2 (\mathbf e_3 + \mathbf e_6) \text, &\quad
        \mathbf g_4^{\mon} &= X_1 X_2 (\mathbf e_4 + \mathbf e_6) \text, \\
        \mathbf g_5^{\mon} &= X_1 X_2 (\mathbf e_5 + \mathbf e_6) \text, &\quad
        \mathbf g_6^{\mon} &= X_1^2X_2 \mathbf e_6 \text.
    \end{align*}
\end{example}

\subsection{Presentation of homology of torsion-free chain complexes}

A fundamental task in multiparameter topological data analysis (TDA) is the efficient computation of a free presentation of homology,
\[ V/U=\ker g/\im f \text, \text{ where }U\coloneqq\im f\text{ and }V\coloneqq\ker g,\]
of a given chain complex
\[ \begin{tikzcd}[ampersand replacement=\&]
    L \rar["f"] \& M \rar["g"] \& N
\end{tikzcd} \]
of $n$-graded torsion-free modules.

In this section, we present a direct solution to this problem by embedding $V/U$ as submodule of $F/U$ for a proper graded free module $F$, and using the relative Schreyer's theorem in Theorem \ref{thm:KernelOfPresentation:intro} to compute the presentation.

Since the modules $L$, $M$ and $N$ are assumed to be finitely generated, they can be covered by finite rank graded free modules.
Further, since these modules are assumed to be torsion-free, they admit embeddings into finite rank graded free modules.
That is, there exist epimorphisms~$\pi_1, \pi_2, \pi_3$ and monomorphisms~$\iota_1, \iota_2, \iota_3$,
\[ \begin{tikzcd}[ampersand replacement=\&]
    \displaystyle\bigoplus_{j=1}^{t_1} \Sigma^{\alpha_j} R \& \displaystyle\bigoplus_{j=1}^{t_2} \Sigma^{\beta_j} R \& \displaystyle\bigoplus_{j=1}^{t_3} \Sigma^{\gamma_j} R \\
    L \rar["f"] \& M \rar["g"] \& N \\
    \displaystyle\bigoplus_{i=1}^{s_1} \Sigma^{\alpha'_i} R \& \displaystyle\bigoplus_{i=1}^{s_2} \Sigma^{\beta'_i} R \& \displaystyle\bigoplus_{i=1}^{s_3} \Sigma^{\gamma'_i} R \rlap{\text,}
    \arrow[from=1-1, to=2-1, two heads, "\pi_2"]
    \arrow[from=1-2, to=2-2, two heads, "\pi_1"]
    \arrow[from=1-3, to=2-3, two heads, "\pi_0"]
    \arrow[from=2-1, to=3-1, hook, "\iota_2"]
    \arrow[from=2-2, to=3-2, hook, "\iota_1"]
    \arrow[from=2-3, to=3-3, hook, "\iota_0"]
\end{tikzcd} \]
for some $\alpha_j, \alpha'_i$, $\beta_j, \beta'_i$, and $\gamma_j, \gamma'_{i} \in \mathbb Z^n$.

\TFHomology*

\begin{proof}[Proof of Theorem~\ref{thm:homology presentation}]
The main technicality here is to extract generators of $V=\ker g$, as generators of $U$ can be easily obtained, since $U=\im f$ is also equal to the image of the free-to-free homomorphism
\[ \begin{tikzcd}[ampersand replacement=\&]
    \displaystyle\bigoplus_{j=1}^{t_1} \Sigma^{\alpha_j} R \rar["\pi_2", two heads] \& L \rar["f"] \& M \rar["\iota_1", hook] \& \displaystyle\bigoplus_{i=1}^{s_2} \Sigma^{\beta'_i} R \rlap{\text,}
\end{tikzcd} \]
and thus it can be encoded by a graded matrix.
If we find a generating set for $\ker g$, then we can compute a Gr\"obner basis of $V$ relative to $U$ and then directly apply the relative Schreyer's theorem, Theorem \ref{thm:KernelOfPresentation:intro}, to obtain a presentation of homology.

We  compute generators of $\ker g$ by using implicitly the classical Schreyer's theorem. Classical Schreyer’s theorem gives us a way to compute kernels of homomorphisms of free modules (see Remark \ref{rmk:Schreyer thm for obtaining kernel of morphism of free modules}).
Free modules are torsion-free, but not vice versa. For the general case of torsion-free modules, we express the kernel as
\[ \ker g = \pi_1 \Bigg( \ker\bigg( \begin{tikzcd}[ampersand replacement=\&]
    \displaystyle\bigoplus_{j=1}^{t_2} \Sigma^{\beta_j} R \rar[two heads, "\pi_1"] \& M \rar["g"] \& N \rar[hook, "\iota_0"] \& \displaystyle\bigoplus_{i=1}^{s_3} \Sigma^{\gamma'_i} R
\end{tikzcd} \bigg) \Bigg) \text. \]
That is, first we consider the free-to-free homomorphism
\[ \begin{tikzcd}[ampersand replacement=\&]
    \displaystyle\bigoplus_{j=1}^{t_2} \Sigma^{\beta_j} R \rar["\pi_1", two heads] \& M \rar["g"] \& N \rar["\iota_0", hook] \& \displaystyle\bigoplus_{i=1}^{s_3} \Sigma^{\gamma'_i} R \rlap{\text,}
\end{tikzcd} \]
and compute a Gr\"obner basis for the kernel $\ker(\iota_0 \circ g \circ \pi_1)$
using the classical Schreyer theorem.
Then applying $\pi_1$ to these generators yields a generating set for $\ker g$.
\end{proof}

\begin{remark}%
    \label{rem:homology resolution}
    Using Theorem~\ref{thm:homology presentation}, one can initiate the computation of a free resolution of the homology module of a complex of torsion free $n$-graded modules. The resolution can then be minimized using the pruning method which will be discussed in Section~\ref{sect:minimal-res}.
\end{remark}

We provide an explicit example that demonstrates how to compute a free presentation of the homology module using our technique.

\begin{example}
    We consider the bifiltration~$X$ from Figure~\ref{fig:flange-bifilt}.
    Our aim is to compute a free presentation of the persistent simplicial homology module $H_1(X)$ restricted to the $3 \times 2$-sublattice of $\mathbb Z^n$ by using Theorem~\ref{thm:homology presentation}.
    The homomorphisms $\pi_2$, $\pi_1$, $\iota_1$, and $\iota_0$ in the diagram
    \[ \begin{tikzcd}[ampersand replacement=\&, column sep=small]
        \Sigma^{\alpha'_1} R \& \bigoplus_{i=1}^6 \Sigma^{\beta'_i} R \& \\
        C_2(X) \& C_1(X) \& C_0(X) \\
        \& \bigoplus_{i=1}^5 \Sigma^{\beta_i} R \& \bigoplus_{i=1}^4 \Sigma^{\gamma_i} R
        \arrow[from=1-1, to=2-1, "\pi_2"]
        \arrow[from=1-2, to=2-2, "\pi_1"]
        \arrow[from=2-1, to=2-2, "\partial_2"]
        \arrow[from=2-2, to=2-3, "\partial_1"]
        \arrow[from=2-2, to=3-2, "\iota_1"]
        \arrow[from=2-3, to=3-3, "\iota_0"]
    \end{tikzcd} \quad\text{with}\quad \left\{\begin{aligned}
        (0, 0) &\eqqcolon \beta_1 \eqqcolon \beta_2 \eqqcolon \beta_3 \\
        &\eqqcolon \beta_1' \eqqcolon \beta'_2 \eqqcolon \gamma_1 \\
        &\eqqcolon \gamma_2 \eqqcolon \gamma_3 \eqqcolon \gamma_4 \text, \\
        (1, 0) &\eqqcolon \beta_4 \eqqcolon \beta'_3 \eqqcolon \beta'_5 \text, \\
        (0, 1) &\eqqcolon \beta_5 \eqqcolon \beta'_4 \eqqcolon \beta'_6 \text, \\
        (2, 1) &\eqqcolon \alpha'_1 \text,
    \end{aligned} \right. \]
    yield morphisms of free modules, represented by the graded matrices below
    \begin{gather*}
         \iota_0 \circ \partial_1 \circ \pi_1  = \begin{bmatrix}
            -1 & -1 & 0 & 0 & 0 & 0 \\
            1 & 0 & -X_1 & -X_2 & -X_1 & 0 \\
            0 & 0 & X_1 & X_2 & 0 & -X_2 \\
            0 & 1 & 0 & 0 & X_1 & X_2
        \end{bmatrix} \text, \\
         \iota_1 \circ \pi_1  = \begin{bmatrix}
            1 & 0 & 0 & 0 & 0 & 0 \\
            0 & 1 & 0 & 0 & 0 & 0 \\
            0 & 0 & X_1 & X_2 & 0 & 0 \\
            0 & 0 & 0 & 0 & 1 & 0 \\
            0 & 0 & 0 & 0 & 0 & 1
        \end{bmatrix} \text{, } \iota_1 \circ \partial_2 \circ \pi_2 = \begin{bmatrix}
            0 \\
            0 \\
            X_1^2 X_2 \\
            -X_1 X_2 \\
            X_1^2
        \end{bmatrix}
    \end{gather*}
    where the free modules are endowed with the standard (graded) bases.

    We compute the kernel of $\iota_0 \circ \partial_1 \circ \pi_1$
    by computing a Gr\"{o}bner basis $G$ of the set of columns of $\partial_1$, computing the syzygies of $G$, extracting the syzygies $S$ of $G$, and finally computing a Gr\"{o}bner basis $H$ of $S$. All Gr\"{o}bner basis computations will be done with respect to a POT module monomial ordering with $\mathbf e_1 \succ \mathbf e_2 \succ \dotsb$.
    Following this pipeline, we obtain
    \[
    G=\begin{bmatrix}
        1 & 0 & 0 & 0 \\
        -1 & 1 & 0 & 0 \\
        0 & 0 & X_1 & X_2 \\
        0 & -1 & -X_1 & -X_2 \\
    \end{bmatrix},
    \]
    after application of Buchberger's algorithm and full reduction.
    In particular, we have $G= \partial_1 \cdot A$ and $\partial_1 = G \cdot B$ where
    \[
    A=\begin{bmatrix}
        -1 & 1 & X_1 & 0 \\
        0 & -1 & -X_1 & 0 \\
        0 & 0 & 1 & 0 \\
        0 & 0 & 0 & 0 \\
        0 & 0 & 0 & 0 \\
        0 & 0 & 0 & -1 \\
    \end{bmatrix},\quad
    B=\begin{bmatrix}
        -1 & -1 & 0 & 0 & 0 & 0 \\
        0 & -1 & -X_1 & -X_2 & -X_1 & 0 \\
        0 & 0 & 1 & 0 & 0 & 0 \\
        0 & 0 & 0 & 1 & 0 & -1 \\
    \end{bmatrix}.
    \]
    Using Schreyer's construction, the syzygy module of $G$ is generated by
    \[ \mathbf w \coloneqq X_2 \mathbf e_3 - X_1 \mathbf e_4 \text. \]
    Thus the syzygies of $\partial_1$ are given by the columns of the following matrix $S$, which contains $A\cdot\mathbf{w}$ and the non-zero columns of $\mathbb{I}_6-A\cdot B$,
    \[
    S=\begin{bmatrix}
        X_1X_2 & X_2 & X_1 \\
        -X_1X_2 & -X_2 & -X_1 \\
        X_2 & 0 & 0 \\
        0 & 1 & 0 \\
        0 & 0 & 1 \\
        -X_1 & 1 & 0 \\
    \end{bmatrix}.
    \]
    Applying Buchberger's algorithm, we find the Gr\"{o}bner basis consisting of the columns of
    \[
    H=\begin{bmatrix}
        X_1 & X_2 & 0 & 0 \\
        -X_1 & -X_2 & 0 & 0 \\
        0 & 0 & X_2 & 0 \\
        0 & 1 & -X_1 & X_1 \\
        1 & 0 & 0 & -X_2 \\
        0 & 1 & 0 & X_1 \\
    \end{bmatrix}.
    \]
    The only non-trivial syzygy of this Gr\"{o}bner basis is
    \[ X_2 \mathbf e_1 - X_1 \mathbf e_2 + \mathbf e_4 \text, \]
    which implies that the first three columns of $H$ are a minimal generating set of the kernel of $\iota_0 \circ \partial_1 \circ \pi_1$.

    We apply the map $\iota_1\circ \pi_1$ to the columns of $H$ and obtain (omitting zero columns)
    \[
    H'=\begin{bmatrix}
        X_1 & X_2 & 0 \\
        -X_1 & -X_2 & 0 \\
        0 & X_2 & X_1X_2 \\
        1 & 0 & -X_2 \\
        0 & 1 & X_1 \\
    \end{bmatrix}.
    \]
    The columns of $H'$ form a reduced Gr\"{o}bner basis of $V\coloneqq\iota_1 (\ker \partial_1)$ relative to $U\coloneqq\iota_1 (\im \partial_2)$ for a POT module monomial ordering. Hence, the syzygy module of $H'$ is generated by the columns of
    \[
    S'=\begin{bmatrix}
        X_2 & 0 \\
        -X_1 & 0 \\
        1 & X_1
    \end{bmatrix},
    \]
    where the first column is an $S$-polynomial and the second results from annihilation of the leading module term of the third column of $H'$ modulo $\lt U$. Finally one can extract the minimal generating set of $V/U$ consisting of the first two columns of $H'$, with $\begin{bmatrix}
        X_1X_2 & -X_1^2
    \end{bmatrix}^\top$ as cyclic generator of its syzygy module.
\end{example}

\subsection{Presentations of submodules of cofree modules}%
\label{sect:gb-free-cofree}

Throughout the section we assume that every $R$-module is graded, finitely generated, and finitely supported.
For such an $R$-module~$M$ we consider its flat-injective presentations $\varphi\colon F \to E$, see Definition~\ref{def:flange}, where $F$ is free of finite rank and $E$ is \emph{cofree of finite corank}, which means that $E \cong \hom_R(Q, E(\Bbbk))$ for some free $R$-module $Q$ of finite rank.
Then, because such flat-injective presentations admit free-injective matrices and these represent the generators of~$M$ as elements of an injective module, we can apply the techniques of relative Gröbner bases from Section~\ref{sect:rel-gb} to flat-injective presentations.
In particular, we provide definitions of Gröbner basis algorithms (Buchberger's algorithm and Schreyer's algorithm) that are specialized to flat-injective presentations, see Algorithm~\ref{alg:BuchbergerFlange} and Algorithm~\ref{alg:FreePresentation}.

Our existence assumption on specific flat-injective presentations is a direct consequence of our finiteness assumptions on $R$-modules.

\begin{lemma}%
    \label{lem:fg-fs-flange}
    Let $M$ be a finitely generated and finitely supported module.
    \begin{enumerate}
        \item\label{lem:fg-fs-flange:1} There exist $\alpha_1, \dotsc, \alpha_s \in \mathbb Z^n$ such that $M$  embeds into $\bigoplus_{i = 1}^s \Sigma^{\alpha_i} E(\Bbbk)$.
        \item\label{lem:fg-fs-flange:2} There is an epimorphism  $\bigoplus_{j=1}^t \Sigma^{\beta_j} R\twoheadrightarrow M$, for some $\beta_1, \dotsc, \beta_t \in \mathbb Z^n$.
        \item\label{lem:fg-fs-flange:3} There exists a flat-injective presentation
        \[ \varphi\colon \bigoplus_{j=1}^t \Sigma^{\beta_j} R \longrightarrow \bigoplus_{i = 1}^s \Sigma^{\alpha_i} E(\Bbbk) \]
        with $\alpha_1, \dotsc, \alpha_s, \beta_1, \dotsc, \beta_t \in \mathbb Z^n$, which admits a free-injective matrix.
    \end{enumerate}

    \begin{proof}
        \ref{lem:fg-fs-flange:1}:
        It suffices to show that $M$ is Artinian.
        For a submodule $U \subseteq M$ let $d_U\coloneqq \sum_{a \in \supp M} \dim_{\Bbbk}(M_a) = \dim_{\Bbbk} (\Bbbk \otimes_R M) < \infty$.
        For any chain
        \[ M \supseteq U_1 \supseteq U_2 \supseteq \dotsb \supseteq U_m \supseteq U_{m+1} \supseteq \dotsb \]
        of submodules of $M$ we have a non-increasing sequence $(d_{U_m})_{m \in \mathbb N}$ of non-negative integers.
        This sequence stabilizes, that is, there exists $m_0 \in \mathbb N$ with $d_{U_m} = d_{U_{m_0}}$ for all $m \geq m_0$.
        Thus, we have $U_m = U_{m_0}$ for $m \geq m_0$.
        Hence, $M$ is Artinian, and it follows from a classical result, see e.g.\ \cite[Proposition 18.1.2]{ChristensenFoxbyHolm2024}, that $M$ can be embedded in $\bigoplus_{i=1}^s \Sigma^{\alpha_i} E(\Bbbk)$.

        \ref{lem:fg-fs-flange:2}:
        Choose the uniquely determined map $\varepsilon\colon \bigoplus_{j=1}^t \Sigma^{\beta_j} R \to M$ with $\varepsilon(\mathbf e_j) = \mathbf g_j$ for generators $\mathbf g_1, \dotsc, \mathbf g_j$ of $M$.

        \ref{lem:fg-fs-flange:3}:
        We choose, by using \ref{lem:fg-fs-flange:1}, a monomorphism $\hat \varphi\colon M \hookrightarrow \bigoplus_{i=1}^s \Sigma^{\alpha_i} E(\Bbbk)$ and, by using \ref{lem:fg-fs-flange:2}, an epimorphism $\check \varphi\colon \bigoplus_{j=1}^t \Sigma^{\beta_j} R \twoheadrightarrow M$.
        Then $\varphi \coloneqq \hat \varphi \circ \check \varphi$ is a flat-injective presentation of $M$ since we have $\im \varphi \cong \im {\check\varphi} \cong M$.
    \end{proof}
\end{lemma}

Therefore, the structure of $E(\Bbbk)$ is crucial to handle the generators of a module.
Since every module is assumed to be finitely supported, we may assume further that $\supp M \subseteq \mathbb N^n$ for every $R$-module $M$; in general, $\Sigma^{\min(\supp M)} M$ is supported in $\mathbb N^n$.

Now we apply the Čech complex, also known as the infinite dual Koszul complex, see \cite[pp.\ 567--569]{ChristensenFoxbyHolm2024}, to obtain the analytic structure of $E(\Bbbk)$, and by that the analytic structure of the generators of a module $M$ given a flat-injective presentation $\varphi\colon F \to M$ of $M$.

We write $\varepsilon_i$ for the $i$-th standard lattice generator in $\mathbb Z^n$.
That is, $\varepsilon_i = (\varepsilon_{i1}, \dotsc, \varepsilon_{in})$ with $\varepsilon_{ii} = 1$ and $\varepsilon_{ij} = 0$ for $1 \leq j \leq n$, $i \neq j$.

\begin{definition}[Čech complex]
    Let $n \in \mathbb N$, and write $[n]\coloneqq \{ 1, \dotsc, n \}$ and $\binom{[n]}k\coloneqq \{ \sigma \in [n] \mid \lvert \sigma \rvert = k \}$.
    For $k \in \mathbb N$ and $\sigma \in \binom{[n]}k$ let $(X_\sigma)$ denote the ideal generated by $\{ X_l \mid l \in \sigma \}$ and let $\varepsilon_\sigma = \sum_{l \in \sigma} \varepsilon_l \in \mathbb Z^n$.
    The \emph{$n$-dimensional (graded) Čech complex} is the complex $\check C_\bullet^R(X_1, \dotsc, X_n)$
    with
    \[ \check C_k^R(X_1, \dotsc, X_n) \coloneqq \bigoplus_{\sigma \in \binom{[n]}k} \Sigma^{\varepsilon_\sigma} R_{(X_\sigma)} \]
    and differentials $\kappa_k\colon \check C_k^R(X_1, \dotsc, X_n) \to \check C_{k-1}^R(X_1, \dotsc, X_n)$ defined by
    \[ \kappa_k \coloneqq \sum_{\sigma = \{ q_1 < \dotsb < q_k \}} \sum_{l = 1}^k (-1)^{kl} \iota_{\sigma, q_l} \]
    where $\iota_{\sigma, q_l}$ is the canonical localization map $\Sigma^{\varepsilon_{\sigma}} R_{(X_\sigma)} \to \Sigma^{\varepsilon_{\sigma \setminus \{ q_l \}}} R_{(X_{\sigma \setminus \{ q_l \}})}$.
\end{definition}

\begin{example}
    For $n = 2$ the Čech complex $\check C^R_\bullet(X_1, X_2)$ is given by the complex
    \[ \begin{tikzcd}[ampersand replacement=\&]
        0 \rar \&
        \Sigma^{(1, 1)} R_{(X_1, X_2)} \rar["{\left(\begin{smallmatrix} 1 \\ 1 \end{smallmatrix}\right)}"] \&
        \Sigma^{(1, 0)} R_{(X_1)} \oplus \Sigma^{(0, 1)} R_{(X_2)} \rar["{\left(\begin{smallmatrix} -1 & 1 \end{smallmatrix}\right)}"] \&
        R_{(0)} \rar \&
        0 \rlap{\text.}
    \end{tikzcd} \]
    It can be seen that this complex is acyclic with $H_0(\check C^R_\bullet(X_1, X_2)) \cong E(\Bbbk) \cong \Bbbk[X_1^{-1}, X_2^{-1}]$, for instance by a dimension counting argument as in \cite[Lemma 3.8]{lenzen2024computing}.
\end{example}

Let $E = \bigoplus_{i = 1}^s \Sigma^{\alpha_i} E(K)$ be an injective module.
We consider the monomial submodule $U_E$ generated by
\[ G_E \coloneqq \big\{ X^{(\alpha_{ik} + 1) \cdot \varepsilon_k} \mathbf e_i \mid i \in \{ 1, \dotsc, s \},\ k \in \{ 1, \dotsc, n \} \big\} \subseteq R^s \text, \]
where we write $\alpha_i = (\alpha_{i1}, \dotsc, \alpha_{in}) \in \mathbb Z^n$.

\begin{proposition}%
    \label{prop:inj-pos-graded}
    For any injective module $E = \Sigma^{\alpha} E(\Bbbk)$ there is a graded isomorphism
    \[ E_{\mathbb N^n} \cong R/G_E \text. \]

    \begin{proof}
        We consider the Čech complex $\check C_\bullet \coloneqq \check C^R_\bullet(X_1, \dotsc, X_n)$.
        Then we have $H_0(\check C_\bullet) = E(\Bbbk)$, cf.\ \cite[Proposition 13.1.4]{ChristensenFoxbyHolm2024}.
        Therefore, $E(\Bbbk)$ is isomorphic to $R_{(0)} / (\sum_{k=1}^n \Sigma^{\varepsilon_k} R_{(X_k)})$ where we identify $\Sigma^{\varepsilon_k} R_{(X_k)}$ with its homomorphic image in $R_{(0)}$ under localization.
        This identification leads to the submodule
        \[ \sum_{k=1}^n \Sigma^{\varepsilon_k} R_{(X_k)} = \sum_{k=1}^n \langle X_k \rangle_{R_{(0)}} \text, \]
        which is generated by the set $\{ X_1, \dotsc, X_k \} \subseteq R_{(0)}$.
        Applying a shift by $\alpha \in \mathbb Z^n$ and taking the $\mathbb N^n$-graded part gives
        \[ (\Sigma^{\alpha} E(\Bbbk))_{\mathbb N^n} = (\Sigma^{\alpha} R_{(0)})_{\mathbb N^n} / (X_1^{\alpha_1+1}, \dotsb, X_n^{\alpha_n + 1}) \text. \]
        Since we can identify $(\Sigma^{\alpha} R_{(0)})_{\mathbb N^n} \cong (R_{(0)})_{\mathbb N^n}$ with the module $R$, we have a graded isomorphism
        \[ (\Sigma^{\alpha} E(k))_{\mathbb N^n} \cong R / (X_1^{\alpha_1 + 1} \dotsm X_n^{\alpha_n + 1}) \text. \]
    \end{proof}
\end{proposition}

\begin{remark}%
    \label{rem:E as Rd mod U}
    For $E \coloneqq \bigoplus_{i = 1}^s \Sigma^{\alpha_i} E(\Bbbk)$, because of Prop.~\ref{prop:inj-pos-graded} we have
    \[ E_{\mathbb N^n} \cong \bigoplus_{i = 1}^s R/G_{\Sigma^{\alpha_i} E(\Bbbk)} \cong R^s / U \]
    with $U \coloneqq  \bigoplus_{i = 1}^s G_{\Sigma^{\alpha_i} E(\Bbbk)}$.
    Hence, the $\mathbb N^n$-graded part of any cofree module can be interpreted as a quotient of $R^d$ for some $d \in \mathbb N$.
\end{remark}

If $M$ is a graded $\Bbbk[X_1, \dotsc, X_n]$-module that is finitely generated and finitely supported, we may assume that $M_{\mathbb N^n}$ is isomorphic to $M$.
For if not, let $g_1, \dotsc, g_t$ be homogeneous generators of $M$ with degrees $\beta_1, \dotsc, \beta_t \in \mathbb Z^n$, respectively.
Then for $\gamma = \beta_1 \wedge \dotsb \wedge \beta_s$ the shift functor $\Sigma^\gamma$ is an autoequivalence of categories, which sends $M$ to the module $\Sigma^\gamma M$ isomorphic to $(\Sigma^\gamma M)_{\mathbb N^n}$.
Thus, the shifted module $\Sigma^\gamma M$ satisfies the same properties as $M$.
Especially, if $\varphi\colon F \to E$ is a (minimal) flat-injective presentation of $M$, the map $\Sigma^\gamma \varphi\colon \Sigma^\gamma F \to \Sigma^\gamma E$ is a (minimal) flat-injective presentation of $M$.

\begin{lemma}
    The set $G_E$ is a Gröbner basis with respect to any monomial order.

    \begin{proof}
        For $i, j \in \{ 1, \dotsc, s \}$ and $k, l \in \{ 1, \dotsc, n \}$ the S-polynomial of $X^{(\alpha_i + 1) \varepsilon_k} \mathbf e_i$ and $X^{(\alpha_j + 1) \varepsilon_l} \mathbf e_j$ is zero.
        Hence, $G_E$ is a Gröbner basis of its generated submodule.
    \end{proof}
\end{lemma}

For a free-injective matrix $A = (a_{ij})_{i, j}$ with cogenerator degrees $\alpha_1, \dotsc, \alpha_s$ and generator degrees $\beta_1, \dotsc, \beta_t$ we define
\[ H_A \coloneqq \bigg\{ X^{\beta_j} \sum_{i = 1}^s a_{ij} \mathbf e_i \mid j \in \{ 1, \dotsc, t \} \bigg\} \subseteq R^s \text. \]
Further, we define $V_A \coloneqq \langle H_A \rangle + U_E$ with $E \coloneqq \bigoplus_{i=1}^s \Sigma^{\alpha_i} E(\Bbbk)$.

\begin{definition}%
    \label{def:matrix-Groebner}
    Let $\alpha_1, \dotsc, \alpha_s$ and $\beta_1, \dotsc, \beta_t$ in $\mathbb N^n$.
    A free-injective matrix $A$ with cogenerator degrees $\alpha_1, \dotsc, \alpha_s$ and generator degrees $\beta_1, \dotsc, \beta_t$ is said to be in \emph{(reduced) Gröbner form} if the set $H_A$ is a (reduced) Gröbner basis relative to $U_E$.
\end{definition}

We immediately obtain a characterization of graded matrices in Gröbner form by applying condition~\ref{RelGBCompletion(2)} of Proposition~\ref{prop:RelGBCompletionToClassicalGB}.

\begin{corollary}
    Let $H = \{ \mathbf h_1, \dotsc, \mathbf h_t \} \subseteq R^s$.
    The set $H$ is a Gröbner basis of $\langle H \rangle + \langle G_E \rangle$ relative to $\langle G_E \rangle$ if and only if
    \begin{enumerate}[label=(\roman*)]
        \item $\mathbf S(\mathbf h_j, X^{(\alpha_{ik} + 1) \cdot \varepsilon_k} \mathbf e_i)$ is divisible by $G_E \cup H_A$ for all $i \in \{ 1, \dotsc, s \}$, $j \in \{ 1, \dotsc, t \}$, and $k \in \{ 1, \dotsc, n \}$, and
        \item $\mathbf S(\mathbf h_j, \mathbf h_l)$ is divisible by $G_E \cup H_A$ for all $1 \leq j < l \leq t$.
    \end{enumerate}
\end{corollary}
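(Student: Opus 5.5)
The plan is to reduce the statement to the classical Buchberger criterion applied to the finite set $H\cup G_E$. By Proposition~\ref{prop:RelGBCompletionToClassicalGB}, the equivalence \ref{RelGBCompletion(1)}$\Leftrightarrow$\ref{RelGBCompletion(2)} applies with $U=\langle G_E\rangle$ (using the preceding lemma that $G_E$ is a Gröbner basis of $U$) and $V=\langle H\rangle+\langle G_E\rangle$. It says that $H$ is a Gröbner basis of $V$ relative to $U$ if and only if $H\cup G_E$ is a Gröbner basis of $V$. The hypothesis in Definition~\ref{def:RelGB} that elements of $H$ lie in $V\setminus U$ and are supported outside $\lm(U)$ is tacitly part of the setting; I would remark that, for elements of $H_A$, this is the relevant normalisation. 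Here and below, $H_A$ in the statement is read as $H$.

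Next I would apply Buchberger's criterion, in its standard-representation form from the Remark preceding it, to the set $H\cup G_E$. The S-pairs split into three types. The pairs inside $G_E$ have S-polynomial zero by the lemma, since they are monomials and their lcm quotients cancel exactly. The pairs $(\mathbf h_j, X^{(\alpha_{ik}+1)\varepsilon_k}\mathbf e_i)$ give condition (i). The pairs $(\mathbf h_j,\mathbf h_l)$ give condition (ii). So $H\cup G_E$ is a Gröbner basis exactly when each S-polynomial of type (i) and (ii) reduces to zero modulo $G_E\cup H$.

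The one point needing care is the translation between "divisible by $G_E\cup H$" and "reduces to zero / has a standard representation". I interpret "divisible" as the existence of a standard representation $\mathbf S=\sum q_{\mathbf f}\mathbf f$ over $\mathbf f\in G_E\cup H$ with $\lm(q_{\mathbf f}\mathbf f)\preceq\lm(\mathbf S)$; under this reading the equivalence is exactly the standard-representation version of Buchberger's criterion. The two directions are then as follows.

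For the forward direction, if $H\cup G_E$ is a Gröbner basis, then every S-polynomial lies in $V$. The division algorithm by a Gröbner basis then produces a standard representation of it.

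For the converse, the standard-representation criterion yields that $H\cup G_E$ is a Gröbner basis. Proposition~\ref{prop:RelGBCompletionToClassicalGB} then transfers this back to the relative statement.

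If "divisible" were meant in the weaker sense of mere membership in the span, that condition would hold trivially, since every S-polynomial lies in $V=\langle G_E\cup H\rangle$. The statement would then be false, so the standard-representation reading is forced. I expect this interpretive step to be the main obstacle; the rest is bookkeeping over the three pair types.
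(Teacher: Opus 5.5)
Your proposal is correct and takes the same route as the paper. The paper obtains the corollary by combining the equivalence of (i) and (ii) in Proposition~\ref{prop:RelGBCompletionToClassicalGB} with Buchberger's criterion for $H\cup G_E$, discarding the pairs inside $G_E$ because $G_E$ is monomial. Your readings of ``divisible'' as ``has a standard representation (equivalently, reduces to zero)'', of $H_A$ as $H$, and of the normal-form condition on $H$ as a tacit hypothesis are exactly the repairs needed to make the statement literally true.
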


\begin{remark}%
    \label{rem:s-pol-flange}
    From the previous corollary it immediately follows that a free-injective matrix $A$ as in Definition~\ref{def:matrix-Groebner} is in Gröbner form if and only if
    \begin{enumerate}[label=(\roman*)]
        \item the S-polynomials%
        \begin{equation}%
            \label{eq:s-pol-flange1}
            \mathbf S(X^{\beta_j} \sum_{i' = 1}^s a_{i'j} \mathbf e_{i'}, X^{(\alpha_{lk} + 1) \cdot \varepsilon_k} \mathbf e_i) = X^{(\alpha_{lk} + 1) \cdot \varepsilon_k \vee \beta_j} \sum_{i' \neq l} a_{i'j} \mathbf e_{i'}
        \end{equation}
        for $1 \leq i \leq s$, $1 \leq j \leq t$ and $1 \leq k \leq n$ are divisible by $G_E \cup H_A$ and
        \item the S-polynomials
        \begin{equation}%
            \label{eq:s-pol-flange2}
            \mathbf S(X^{\beta_j} \sum_{i = 1}^s a_{ij} \mathbf e_i, X^{\beta_l} \sum_{i = 1}^s a_{il} \mathbf e_i)
        \end{equation}
        for $1 \leq j < l \leq t$ are divisible by $G_E \cup H_A$.
    \end{enumerate}
\end{remark}

This allows us to give a simplified variant of Buchberger's algorithm for free-injective matrices, Algorithm~\ref{alg:BuchbergerFlange}, which makes use of the aforementioned criterion.
In contrast to the general case, we need not consider the S-polynomials of pairs inside $G_E$ since $G_E$ is monomial.

Furthermore, it suffices to divide the S-polynomial \eqref{eq:s-pol-flange1} by the set $H$ after writing the S-polynomial in normal form with respect to $G_E$.
This normal form is determined by removing every summand $a_{i'j}\mathbf e_{i'}$ of the right side of \eqref{eq:s-pol-flange1} for that $(\alpha_i + \varepsilon_k) \vee \beta_j \leq \alpha_{i'}$ does not hold.

For a free-injective matrix $A$, we write $A_j$ for the $j$-th column of $A$, regarded as a free-injective $s \times 1$-matrix with generator degree $\beta_j$ via monomialization, see Section~\ref{ssect:monomialization}.
Further, for a free-injective $s \times 1$-matrix $\mathbf v$, we write $\mdeg \mathbf v$ for the unique generator degree of $\mathbf v$.

We assume that every free-injective matrix, also called \emph{free-injective vectors}, is implicitly converted to normal form, which means that for any free-injective matrix $A = (a_{ij})_{i, j}$, $a_{ij} \neq 0$ only if $\alpha_i \leq \beta_j$.
That is, we regard free-injective matrices as a first-class data structure.

Lastly, for $\beta = (\beta_1, \dotsc, \beta_n)$ and $\beta' = (\beta'_1, \dotsc, \beta'_n)$ we define the \emph{join of $\beta$ and $\beta'$} to be $\beta \vee \beta' \coloneqq (\max(\beta_1, \beta'_1), \dotsc, \max(\beta_n, \beta'_n))$.

\begin{algorithm}
    \caption{Polynomial division algorithm for free-injective vectors}
    \label{alg:MonomialDivision}
    \begin{algorithmic}[1]
        \Statex \textbf{Input:} A free-injective vector $\mathbf v = [\mathbf v_1, \dotsc,  \mathbf v_s]^T$, a free-injective $(s \times t)$-matrix $A$
        \Statex \textbf{Output:} Remainder of $\mathbf v$ modulo $\{ A_1, \dotsc, A_t \}$
        \Function{MonomialDivision}{$\mathbf v, A$}
            \State \Return $\textsc{RelativeDivision}(G_E, \{ A_1, \dotsc, A_t \}, X^{\mdeg \mathbf v} \sum_{i = 1}^s \mathbf v_i \mathbf e_i)$
        \EndFunction
    \end{algorithmic}
\end{algorithm}

\begin{algorithm}
    \caption{Relative Buchberger's algorithm for free-injective matrices}
    \label{alg:BuchbergerFlange}
    \begin{algorithmic}[1]
        \Statex \textbf{Input:} A free-injective matrix $A$ with cogenerator degrees $\alpha_1, \dotsc, \alpha_s$ and generator degrees $\beta_1, \dotsc, \beta_t$.
        \Statex \textbf{Output:} A free-injective matrix $A'$ in Gröbner form with $\im A = \im A'$.
        \Function{BuchbergerFlange}{$A$}
            \State $A' \leftarrow A \in \Bbbk^{s \times t}$
            \State $Q \leftarrow \{ (A_i, A_j) \mid 1 \leq i < j \leq t \} \cup \{ (A_j, X^{(\alpha_{ik} + 1) \cdot \varepsilon_k} \mathbf e_i) \mid i \in \{ 1, \dotsc, s \}, j \in \{ 1, \dotsc, t \}, k \in \{ 1, \dotsc, n \} \}$
            \While{$Q \neq \emptyset$}
                \State Choose $(\mathbf v, \mathbf w) \in Q$
                \State $Q \leftarrow Q \setminus \{ (\mathbf v, \mathbf w) \}$
                \State $(\mathbf q, \dotsc) \coloneqq \textsc{MonomialDivision}(\mathbf S(\mathbf v, \mathbf w), A')$
                \If{$\mathbf q \neq 0$}
                    \State $Q \leftarrow Q \cup \{ (\mathbf q, \mathbf v) \mid \text{each column $\mathbf v$ in $A'$}\} \cup \{ (\mathbf q, X^{(\alpha_{ik}+1)\cdot\varepsilon_k} \mathbf e_i) \mid i \in \{ 1, \dotsc, s \}, k \in \{ 1, \dotsc, n \} \}$
                    \State $A' \leftarrow \begin{bmatrix} A' & \mathbf q \end{bmatrix}$
                \EndIf
            \EndWhile
            \State \Return $A'$
        \EndFunction
    \end{algorithmic}
\end{algorithm}

\begin{theorem}
    Algorithm~\ref{alg:BuchbergerFlange} terminates and given a free-injective matrix $A$ it computes a free-injective matrix $A'$ in Gröbner form with $\im A = \im A'$.
    In particular, $A$ is a submatrix of $A'$.

    Hence, every finitely generated and finitely supported $n$-graded $R$-module $M$ admits a representation as a free-injective matrix $A$ in Gröbner form.

    \begin{proof}[Proof]
        In the following we identify the free-injective matrix $A'$ with its set of columns.

        Termination of the algorithm is equivalent to termination of Buchberger's algorithm in its non-relative variant for the input $A' \cup G_E$ where $E = \bigoplus_{i = 1}^s \Sigma^{\alpha_i} E(\Bbbk)$.
        Therefore, the algorithm terminates.

        To show correctness, we write $A^{\prime(i)}$ to denote the free-injective matrix $A'$ after the $i$-th iteration of the outer loop in Algorithm~\ref{alg:BuchbergerFlange}, lines 4--12.
        In particular, we have $A^{\prime(0)} = A$.
        Since the algorithm terminates for any input, the sequence $(A^{\prime(i)})_{i \geq 0}$ eventually becomes stationary.
        Thus, we can choose some $i_0 \geq 0$ with $A^{\prime(i)} = A^{\prime(i_0)}$ for all $i \geq i_0$.

        To show that $\im A$ coincides with the output of the algorithm, it suffices by induction that $\im A^{\prime(i)} = \im A^{\prime(i+1)}$ for every $i \geq 0$.
        Let $i \in \mathbb N$ and $(\mathbf v, \mathbf w) \in Q$ in the $(i+1)$-th iteration.
        Then whenever $\mathbf q \neq 0$ obtained from $(\mathbf v, \mathbf w)$ in line 7 of Algorithm~\ref{alg:BuchbergerFlange}, the element $\mathbf q$ is by construction an element of $\im A^{\prime(i)}$.
        Thus, the invariant $\im A^{\prime(i+1)} = \im A^{\prime(i)} + R\mathbf q = \im A^{\prime(i)}$ is satisfied.
        By induction, we obtain $\im A = \im A^{\prime(i)} = \im A^{\prime(i_0)}$ for any $i \geq 0$.

        Lastly, every matrix $A^{\prime(i+1)}$, for $i \geq 0$, is constructed in line 10 of Algorithm~\ref{alg:BuchbergerFlange} such that $A^{\prime(i)}$ is a submatrix of $A^{\prime(i+1)}$.
        Inductively, we see that the output matrix $A'$ of the algorithm contains $A$ as a submatrix.
    \end{proof}
\end{theorem}

\begin{example}%
    \label{ex:flange-4}
    We reconsider the free-injective matrix $A$ from Example~\ref{ex:flange-1} and endow $R^6$ with the POT-extended graded lexicographic order subject to $\mathbf e_1 \prec \mathbf e_2 \prec \dotsb$.
    By applying Algorithm~\ref{alg:FreePresentation} to $A$ we obtain the free-injective matrix
    \[ \begin{tikzpicture}[ampersand replacement=\&, baseline=(m-4-1.base)]
    	\matrix (m) [matrix of math nodes] {
    		\&[1em] (1,0) \& (0,1) \& (2,0) \& (1,1) \& (1,1) \& (2,1) \& (1,1) \\
    		(1,0) \& 1 \& 0 \& 0 \& 0 \& 0 \& 0 \& 0 \\
    		(0,1) \& 0 \& 1 \& 0 \& 0 \& 0 \& 0 \& 0 \\
    		(2,0) \& 1 \& 0 \& 1 \& 0 \& 0 \& 0 \& 0 \\
    		(1,1) \& 1 \& 0 \& 0 \& 1 \& 0 \& 0 \& 1 \\
    		(1,1) \& 0 \& 1 \& 0 \& 0 \& 1 \& 0 \& -1 \\
    		(2,1) \& 1 \& 1 \& 1 \& 1 \& 1 \& 1 \& 0 \\
    	};
    	\node[rectangle, left delimiter={[}, right delimiter={]}, fit=(m-2-2.north west) (m-7-8.south east)] {};
    \end{tikzpicture} \]
    in Gröbner form.
\end{example}

\begin{algorithm}
    \caption{Relative Schreyer's algorithm for free-injective matrices}
    \label{alg:FreePresentation}
    \begin{algorithmic}[1]
        \Statex \textbf{Input:} A free-injective matrix $A$ in Gröbner form with cogenerator degrees $\alpha_1, \dotsc, \alpha_s$ and generator degrees $\beta_1, \dotsc, \beta_t$.
        \Statex \textbf{Output:} A free presentation $F$ of $\im A$.
        \Function{FreePresentation}{$A$}
            \State $F \leftarrow [] \in \Bbbk^{t \times 0}$
            \For{$1 \leq j_1 < j_2 \leq t$ with $\lm(A_{j_1}) = X^{\alpha} \mathbf e_i$ and $\lm(A_{j_2}) = X^{\beta} \mathbf e_i$ for some $i \in \mathbb N^n$}
                \State $(\mathbf q, a_1, \dotsc, a_t) \coloneqq \textsc{MonomialDivision}(\mathbf S(A_{j_1}, A_{j_2}), A)$
                \State $\sigma \coloneqq X^{\beta_{j_1} \vee \beta_{j_2} - \beta_{j_1}} \mathbf e_{j_1} - X^{\beta_{j_1} \vee \beta_{j_2} - \beta_{j_2}} \mathbf e_{j_2} - \sum_{j_3 = 1}^t a_{j_3} \mathbf e_{j_3} \in R^t$
                \If{$\sigma \neq 0$}
                    \State $F \leftarrow \begin{bmatrix}
                        F & \sigma
                    \end{bmatrix}$
                \EndIf
            \EndFor
            \For{$1 \leq j \leq t$, $1 \leq i \leq s$, and $k \in \{ 1, \dotsc, n \}$ with $\lm(A_j) = X^{\alpha} \mathbf e_i$}
                \State $(\mathbf q, a_1, \dotsc, a_t) \coloneqq \textsc{MonomialDivision}(\mathbf S(A_j, X^{(\alpha_{ik}+1)\cdot\varepsilon_k}\cdot \mathbf e_i), A)$
                \State $\sigma \coloneqq X^{\beta_j \vee (\alpha_{ik} + 1) \varepsilon_k - \beta_j} \mathbf e_j - \sum_{l = 1}^t a_l \mathbf e_l \in R^t$
                \If{$\sigma \neq 0$}
                    \State $F \leftarrow \begin{bmatrix}
                        F & \sigma
                    \end{bmatrix}$
                \EndIf
            \EndFor
            \State \Return $F$
        \EndFunction
    \end{algorithmic}
\end{algorithm}

\FlangeToFree*

\begin{proof}[Proof of Theorem~\ref{thm:free presentation of a free-cofree presentation}]
    Termination of the procedure \textsc{FreePresentation} is obvious.
    Furthermore, every column of the output matrix $F$ of the algorithm is a syzygy of the module $\im A$ by construction.
    Conversely, every syzygy of $\im A$ is a linear combination of columns of $F$ by Theorem~\ref{thm:KernelOfPresentation:intro}.
\end{proof}

\begin{example}%
    \label{ex:flange-5}
    Recall the minimal flat-injective matrix $\tilde A$ for the $\Bbbk[X_1, X_2]$-module $M$ from Example~\ref{ex:flange-1}.
    Algorithm~\ref{alg:BuchbergerFlange} gives the flat-injective matrix
    \[ \begin{tikzpicture}[ampersand replacement=\&, baseline=(m-3-1.north)]
    	\matrix (m) [matrix of math nodes] {
    		\&[1em] (1,0) \& (0,1) \& (1,1) \\
    		(1,1) \& 1 \& 0 \& 1 \\
    		(2,1) \& 1 \& 1 \& 0 \\
    	};
    	\node[rectangle, left delimiter={[}, right delimiter={]}, fit=(m-2-2.north west) (m-3-4.south east)] {};
    \end{tikzpicture} \]
    in Gröbner form, where $R^2$ is endowed with the POT-extension of the graded lexicographic order subject to $\mathbf e_1 \prec \mathbf e_2$.
    Then, by application of Algorithm~\ref{alg:FreePresentation}, we obtain the free presentation $\partial_1:F_1\to F_0$ of the module $M$ that is given by
    \[ \partial_1 = \begin{bmatrix}
    	X_2 & X_2^2 & -X_1X_2 & 0 & 0 & 0 & X_1^2 & 0 & 0 \\
    	-X_1 & 0 & X_1^2 & X_2 & 0 & 0 & 0 & X_1^3 & 0 \\
    	-1 & 0 & 0 & 0 & X_1 & X_2 & 0 & 0 & X_1^2
    \end{bmatrix} \text. \]
    Notice that it can directly be read off the free presentation $\partial_1$ that the third column of the flat-injective matrix in Gröbner form is redundant.
\end{example}

\begin{remark}
    Since the application of Algorithm~\ref{alg:FreePresentation} requires a free-injective matrix in Gröbner form as input, it is customary to apply Algorithm~\ref{alg:BuchbergerFlange} beforehand to a free-injective matrix.
    Both algorithms depend on the computation of a monomial division of S-polynomials, compare line~7 in Algorithm~\ref{alg:BuchbergerFlange} and lines~4 and~11 in Algorithm~\ref{alg:FreePresentation}, which are carried out on similar elements.

    Therefore, an efficient implementation of the concatenation of both algorithms can avoid performing duplicate instances of the monomial division algorithm by storing the coefficients $a_1, \dotsc, a_t$ needed in lines~5--8 and~12--15 of Algorithm~\ref{alg:FreePresentation} during the computation of a free-injective matrix in Gröbner form.
\end{remark}

\section{Computing minimal multigraded resolutions}%
\label{sect:minimal-res}
In this section, we describe a standard technique in multigraded computational algebra,
commonly referred to as \emph{pruning}, for reducing a given generating set
arising from a free presentation of a module.
More precisely, suppose we are given a free presentation
\[
F_1 \xrightarrow{\partial_1} F_0 \xrightarrow{\varepsilon} M \longrightarrow 0,
\]
of a finitely generated $n$-graded module~$M$, where $\partial_1$ is a matrix representing the map, whose rows correspond to generators and whose columns encode relations (first-order syzygies).
Pruning minimization provides a systematic procedure for eliminating redundant generators---corresponding to rows of $\partial_1$---thereby producing a \emph{generator-minimal} presentation of $M$.
This procedure can be applied iteratively in the context of a free resolution
\[
\cdots \longrightarrow F_2 \xrightarrow{\partial_2} F_1 \xrightarrow{\partial_1} F_0 \xrightarrow{\varepsilon} M \longrightarrow 0.
\]
At each stage, the matrix $\partial_i$ represents the syzygy modules of the preceding maps.
By performing pruning minimization on each $\partial_i$, one eliminates redundant generators at every level.
In the $n$-graded setting, this yields a \emph{minimal $n$-graded free resolution} of $M$, since $n$-graded cancellations across different multidegrees are impossible.

Thus, a minimal resolution of a submodule $V/U \subseteq F/U$ can be obtained by first computing a presentation of $V/U$, then iteratively forming the syzygy modules, and finally applying pruning minimization to each matrix in the resulting resolution.
As minimization of resolutions can be obtained by generator-minimizing $V/U\subseteq F/U$ and each $i$-th syzygy module, $\ker(\partial_i)\subseteq F_i$, for $i=1,\ldots,n$, it suffices to show how to generator-minimize a given module when given as $V/U$.
Generator-minimization is also useful for generator-minimizing free-cofree presentations.

In the first subsection, we recall the pruning minimization procedure for generators of a subquotient $V/U \subseteq F/U$, and in the second subsection, we show how to generator-minimize a given free–cofree presentation by applying pruning. In the third subsection, we present an alternative, linear algebraic, viewpoint of generator minimization.

\subsection{Generator-minimization of free presentations}
In this subsection, we describe in detail the procedure of computing minimal generators
of an $R$-submodule $V/U$ of $F/U$ from a given presentation of $V/U$. More precisely, we start with a
generating set $H$ of $V$ (relative to $U$) together with a generating set of the syzygy module
$\Syz(H)$, for instance computed via Schreyer's theorem from a Gröbner basis of $H$.
From this data, we construct a minimal graded generating set $H'$ for the quotient module $V/U$.

The generator-minimization problem cannot be solved by Gröbner basis reduction alone.
Namely, a reduced Gröbner basis of $V$ relative to $U$ need not yield a minimal generating
set of the quotient $V/U$. This phenomenon is illustrated in Example~\ref{ex:non-minimal} below.

\begin{example}%
    \label{ex:non-minimal}
    Consider the submodule $U \subseteq R^2$ generated by
    \begin{align*}
        \mathbf g_1 &\coloneqq X_1 \mathbf e_1 - X_1 \mathbf e_2 \text, \\
        \mathbf g_2 &\coloneqq X_2 \mathbf e_1 + X_2 \mathbf e_2 \text.
    \end{align*}
    The S-polynomial of $\mathbf g_1, \mathbf g_2$ is given by $\mathbf S(\mathbf g_1, \mathbf g_2) = -2X_1 X_2 \mathbf e_2$,
    which is in normal form w.r.t.\ $\{ \mathbf g_1, \mathbf g_2 \}$.
    Let $\mathbf g_3 = X_1 X_2 \mathbf e_2$.
    A Gröbner basis of $U$ is given by $G = \{ \mathbf g_1, \mathbf g_2, \mathbf g_3 \}$.

    Further, the Gröbner basis $G$ is reduced and minimal, but does not minimally generate $G$ since the syzygy
    \[ X_2 \mathbf g_1 - X_1 \mathbf g_2 + 2 \mathbf g_3 = 0 \]
    does not have coefficients in $\mathfrak m = (X_1, \dotsc, X_n)$.

    Hence, reduced $n$-graded Gröbner bases are not necessarily minimal generating sets.
\end{example}

Gröbner basis reduction removes redundancies detected at the level of leading terms, but it does not necessarily identify generators that become redundant after passing to the quotient. Consequently, pruning minimization is required to eliminate superfluous generators.

Below, we adapt the standard generator-minimization (pruning) of matrices associated to multigraded presentations \cite[Defn.~1.24, p.~12]{miller2005combinatorial} to the setting of subquotients $V/U\subseteq F/U$.

\begin{lemma}[Pruning minimization of generators, relative version]%
    \label{lem:Constant_terms_minimality_criterion}
    Let $U\subseteq V\subseteq R^d$ be graded submodules, and let $H=\{\mathbf{h}_1+U,\dotsc,\mathbf{h}_t+U\}$ be a graded generating set of $V/U$.
    Let $S\subseteq R^t$ be a graded generating set of $\Syz(H)$.
    Let the matrix $A\in R^{t\times |S|}$ have the elements of $S$ as columns.
    Then
    \begin{enumerate}
        \item\label{lem:Constant_terms_minimality_criterion:1} $H$ minimally generates $V/U$ if and only if there is no constant module term $\lambda \mathbf{e}_i$, $1\leq i\leq t$, $\lambda\in \Bbbk\setminus\{0\}$, in $\bigcup_{\mathbf{s}\in S}\supp(\mathbf{s})$.
        \item\label{lem:Constant_terms_minimality_criterion:2} $H$ minimally generates $V/U$ if and only if no non-zero constant appears as an entry of $A$.
        \item\label{lem:Constant_terms_minimality_criterion:3} Let $A_0$ be the matrix obtained from $A$ by taking the constant term of each entry.
        Let $r=\rk(A_0)$ and let $I\times J$ with $|I|=|J|=r$ be the index set of an invertible square submatrix of $A_0$.
        Then $H_{\min}\coloneqq\{\mathbf{h}_i+U\in H\mid i\notin I\}$ minimally generates $V/U$.
        \item\label{lem:Constant_terms_minimality_criterion:4} Fix a monomial ordering on $R$ and extend it to a POT order on $R^t$. Given $A$ as input, there is a deterministic algorithm to compute a minimal generating set $H_{\min}$ of $V/U$.
    \end{enumerate}
\end{lemma}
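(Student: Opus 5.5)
The plan is to reduce everything to graded Nakayama applied to the graded $R$-module $M\coloneqq V/U$, via the graded presentation
\[ F_0\coloneqq\bigoplus_{i=1}^t \Sigma^{\mdeg \mathbf h_i}R \xrightarrow{\ \psi\ } V/U\to 0,\qquad \mathbf e_i\mapsto \mathbf h_i+U,\]
whose kernel is $\Syz(H)$, generated by $S$. The basic fact is that a homogeneous generating set $H$ is minimal iff its image in $M/\mathfrak m M=\Bbbk\otimes_R M$ is a $\Bbbk$-basis. The induced map $\Bbbk\otimes F_0\cong\Bbbk^t\to\Bbbk\otimes M$ is surjective. By right exactness of $\Bbbk\otimes_R-$, its kernel is the image of $\Bbbk\otimes\ker\psi$, i.e.\ the $\Bbbk$-span of the images $\bar{\mathbf s}$ of $\mathbf s\in S$ in $F_0/\mathfrak mF_0$. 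Since the elements of $S$ are homogeneous and the entries of a graded syzygy are homogeneous, $\bar{\mathbf s}$ is exactly the vector of constant terms of $\mathbf s$. So the kernel of $\Bbbk^t\to\Bbbk\otimes M$ is the column space of $A_0$.

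For \ref{lem:Constant_terms_minimality_criterion:1} and \ref{lem:Constant_terms_minimality_criterion:2}, note that both conditions just say $A_0=0$. By the previous paragraph, $A_0=0$ holds iff $\Bbbk^t\to \Bbbk\otimes M$ is injective, i.e.\ iff the $\bar{\mathbf h}_i$ form a basis. That is precisely minimality: if some $\mathbf h_j+U$ were redundant, writing it as an $R$-combination of the others produces a syzygy with coefficient $-1$ at $\mathbf e_j$. Conversely, a syzygy with a nonzero constant entry at $\mathbf e_j$ lets one solve for $\mathbf h_j+U$, after restricting to the homogeneous component of degree $\mdeg\mathbf h_j$.

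For \ref{lem:Constant_terms_minimality_criterion:3}, I would show that $\{\bar{\mathbf h}_i : i\notin I\}$ is a basis of $\Bbbk\otimes M$ and then invoke graded Nakayama to conclude that $H_{\min}$ generates $M$ minimally. The dimension count gives $\dim\Bbbk\otimes M = t-r$. It then suffices that the $\mathbf e_i$ with $i\notin I$ span $\Bbbk^t$ modulo the column space $C$ of $A_0$. The columns indexed by $J$ form a basis of $C$, because their $I$-rows form an invertible submatrix and $r=\rk A_0$. Hence $C\cap\operatorname{span}\{\mathbf e_i:i\notin I\}=0$: a combination of $J$-columns vanishing on the rows $I$ has trivial coefficients. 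Counting dimensions, $C\oplus\operatorname{span}\{\mathbf e_i: i\notin I\}=\Bbbk^t$.

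For \ref{lem:Constant_terms_minimality_criterion:4}, make the choice of $I$ deterministic. Perform Gaussian elimination on $A_0$ in a canonical way: order rows by the POT order on the $\mathbf e_i$ (with ties in degree broken by index), and at each step take as pivot the first admissible column with a nonzero entry in the largest row. Compute $A_0$ from $A$, run this elimination, read off the pivot set $I$, and output $H_{\min}$.

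The main obstacle is the homogeneity bookkeeping in \ref{lem:Constant_terms_minimality_criterion:1}/\ref{lem:Constant_terms_minimality_criterion:2}. A nonzero constant entry of a homogeneous syzygy $\mathbf s$ at position $i$ forces $\deg \mathbf s=\mdeg \mathbf h_i$. So all entries of $\mathbf s$ have degrees $\mdeg\mathbf s-\mdeg\mathbf h_j$, and only generators of degree at most $\mdeg\mathbf h_i$ appear. This lets one solve for $\mathbf h_i$ in the graded sense, and it justifies identifying $\bar{\mathbf s}$ with the constant-term vector.
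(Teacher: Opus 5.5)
Your proof is correct, but for part (3) it takes a genuinely different route from the paper.

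\textbf{Parts (1) and (2).} The paper argues directly on syzygies: $H$ is non-minimal iff some syzygy has a unit coefficient, iff every generating set of $\Syz(H)$ contains such an element. You package the same content as right exactness of $\Bbbk\otimes_R-$, which identifies $\ker(\Bbbk^t\to M/\mathfrak mM)$ with the column space $C$ of $A_0$.

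\textbf{Part (3).} The paper uses iterative pruning. It picks a unit entry $(i_1,j_1)$ and clears row $i_1$ by column operations. It then deletes row $i_1$ and column $j_1$, and cites the standard minimization step of Cox--Little--O'Shea to see that the remaining generators still generate and the new matrix generates their syzygies. This is repeated $r$ times, and (4) comes from letting the POT order fix the pivots. You instead settle (3) in a single linear-algebra step. The $J$-columns of $A_0$ are a basis of $C$, and $\operatorname{span}\{\mathbf e_i : i\notin I\}$ is a complement of $C$. Hence $\{\bar{\mathbf h}_i\}_{i\notin I}$ is a basis of $M/\mathfrak mM$, and graded Nakayama finishes.

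\textbf{Comparison.} Your version handles an arbitrary invertible maximal submatrix at once and makes $|H_{\min}|=t-r$ transparent. It also sidesteps a point the paper glosses over: that after each pruning step the next pivot $(i_2,j_2)$ still carries a nonzero constant, which is a Schur-complement fact. The paper's pruning, in exchange, produces along the way the reduced matrix $A^{(r)}$, i.e.\ a generating set of $\Syz(H_{\min})$. That is exactly what is needed later to iterate to minimal resolutions and to build generator-minimal free-cofree presentations. Your argument identifies $H_{\min}$ but would need extra work to supply its syzygies.

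\textbf{Minor points on (4).} A POT order compares positions first, so there are no ``ties in degree'' among the $\mathbf e_i$; any fixed deterministic pivoting rule suffices. You should also state explicitly that the pivot rows and columns of your elimination index an invertible $r\times r$ submatrix of $A_0$, so that (3) applies.
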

\begin{proof}
    For assertion~\ref{lem:Constant_terms_minimality_criterion:1}, $H$ is not minimal if and only if there is a syzygy of $H$ with a constant module term $\lambda \mathbf{e}_i$ in its support, if and only if every generating set of $\Syz(H)$ contains an element with a constant module term $\lambda \mathbf{e}_i$ in its support.

    Assertion \ref{lem:Constant_terms_minimality_criterion:2} is immediate from \ref{lem:Constant_terms_minimality_criterion:1}.

    For assertion~\ref{lem:Constant_terms_minimality_criterion:3}, let $I=\{i_1,\dotsc,i_r\}$ and $J=\{j_1,\dotsc,j_r\}$. Say the column $\mathbf{s}_{j_1}$ has a non-zero constant entry in the component indexed by $i_1$ (otherwise, renumber $I$). For $j\neq j_1$, modify the $j$-th column of $A$ by adding a suitable multiple of $\mathbf{s}_{j_1}$ such that the only non-zero entry in the $i_1$-th row that remains is in $\mathbf{s}_{j_1}$. Remove the $i_1$-th row and the $j_1$-th column to obtain a new matrix $A^{(1)}$, with row and column indices inherited from those of $S$.

    By a standard minimization result (e.g. \cite[Proof of Thm. 3.15]{CLOUsing}) for free resolutions applied to our free presentation, the subset $H^{(1)}=\{\mathbf{h}_i+U\in H\mid i\notin \{i_1\}\}$ of $H$ is a generating set of $V/U$ and the set of columns of $A^{(1)}$ is a generating set $\Syz(H^{(1)})$. By the assumptions on the index sets $I$ and $J$, the column indexed by $j_2$ in $A^{(1)}$ has a non-zero constant entry in position $i_2$ (otherwise, renumber $I$). Thus we can iterate the argument and in the end we obtain that $H^{(r)}=\{\mathbf{h}_i+U\in H\mid i\notin I\}$ is a minimal set of generators of $V/U$.

    Assertion~\ref{lem:Constant_terms_minimality_criterion:4} follows from \ref{lem:Constant_terms_minimality_criterion:3} and its proof. The POT order provides a deterministic procedure to select the index sets $I$ and $J$ and the constant matrix entries that are used for the reductions in each step.
\end{proof}

\begin{remark}\label{rem:minimizing_free_resolutions}
    Graded $R$-free resolutions can be minimized by an iterative application of Lemma~\ref{lem:Constant_terms_minimality_criterion}. For details see~\cite[Proof of Thm. 3.15]{CLOUsing}.
\end{remark}

\begin{example}\label{ex:Resolution}
    In this example, we apply the minimization of free resolutions to two different presentations of the same module, illustrating how a good choice of presentation can reduce intermediate expression growth.
    Let $R=\Bbbk[X_1,X_2]$ be the graded local polynomial ring in two variables.
    We consider the graded submodules $U\subseteq V\subseteq R^6$, where $U$ is generated by the columns of the matrix
    \[ \left[\begin{smallmatrix}
        X_1 X_2 & 0 & X_1^2 & 0 & 0 & 0 & 0 & 0 & 0 & 0 & 0 & 0 \\
        0 & X_1 X_2 & 0 & 0 & 0 & 0 & X_2^2 & 0 & 0 & 0 & 0 & 0 \\
        -X_1 X_2 & 0 & 0 & X_1^2 X_2 & 0 & 0 & 0 & X_1 X_2^2 & 0 & 0 & 0 & 0 \\
        0 & -X_1 X_2 & 0 & 0 & X_1^2 X_2 & 0 & 0 & 0 & X_1 X_2^2 & 0 & 0 & 0 \\
        0 & 0 & -X_1^2 & 0 & 0 & X_1^2 X_2 & 0 & 0 & 0 & X_1^3 & 0 & 0 \\
        0 & 0 & 0 & -X_1^2 X_2 & -X_1^2 X_2 & -X_1^2 X_2 & 0 & 0 & 0 & 0 & X_1^2 X_2^2 & X_1^3 X_2 \\
    \end{smallmatrix}\right] \text,
    \]
    and $V\coloneqq\operatorname{im}\partial_0$ where $\partial_0\colon F_0\to R^6$ is given by
    \[ \partial_0\coloneqq\begin{bmatrix}
        X_1 & 0 & 0 & 0 & 0 & 0 \\
        0 & X_2 & 0 & 0 & 0 & 0 \\
        0 & 0 & X_1 X_2 & 0 & 0 & 0 \\
        0 & 0 & 0 & X_1 X_2 & 0 & 0 \\
        0 & 0 & 0 & 0 & X_1^2 & 0 \\
        0 & 0 & 0 & 0 & 0 & X_1^2 X_2 \\
    \end{bmatrix}\, \text. \]
    Let $\varepsilon$ be the composition of $\partial_0$ with the canonical projection $V \twoheadrightarrow V/U$.
    Note that these two generating sets are even a reduced Gr\"{o}bner basis of $U$ and a reduced Gr\"{o}bner basis of $V$ relative to $U$, respectively.
    Then the (non-minimal) $R$-free resolution of $V/U$ induced by these Gr\"{o}bner bases,
    \[ \begin{tikzcd}[ampersand replacement=\&]
        0 \rar \& F_2 \rar["\partial_2"] \& F_1 \rar["\partial_1"] \& F_0 \rar["\varepsilon"] \& V/U \rar \& 0 \rlap{\text,}
    \end{tikzcd} \]
    with $\rank F_0=6$, $\rank F_1=12$, and $\rank F_2=6$,
    is described by matrices $\partial_1\in R^{6\times 12}$ and $\partial_2\in R^{12\times 6}$ that contain non-zero constants. We have
    \setcounter{MaxMatrixCols}{20}
    \[
    \partial_1= \begin{bmatrix}
        X_1 & X_2 & 0 & 0 & 0 & 0 & 0 & 0 & 0 & 0 & 0 & 0 \\
        0 & 0 & X_1 & X_2 & 0 & 0 & 0 & 0 & 0 & 0 & 0 & 0 \\
        0 & -1 & 0 & 0 & X_1 & X_2 & 0 & 0 & 0 & 0 & 0 & 0 \\
        0 & 0 & -1 & 0 & 0 & 0 & X_1 & X_2 & 0 & 0 & 0 & 0 \\
        -1 & 0 & 0 & 0 & 0 & 0 & 0 & 0 & X_1 & X_2 & 0 & 0 \\
        0 & 0 & 0 & 0 & -1 & 0 & -1 & 0 & 0 & -1 & X_1 & X_2 \\
    \end{bmatrix}
    \]
    and
    \[\partial_2 = \begin{bmatrix}
        X_2 & 0 & 0 & 0 & 0 & 0 \\
        -X_1 & 0 & 0 & 0 & 0 & 0 \\
        0 & X_2 & 0 & 0 & 0 & 0 \\
        0 & -X_1 & 0 & 0 & 0 & 0 \\
        -1 & 0 & X_2 & 0 & 0 & 0 \\
        0 & 0 & -X_1 & 0 & 0 & 0 \\
        0 & 0 & 0 & X_2 & 0 & 0 \\
        0 & 1 & 0 & -X_1 & 0 & 0 \\
        0 & 0 & 0 & 0 & X_2 & 0 \\
        1 & 0 & 0 & 0 & -X_1 & 0 \\
        0 & 0 & 0 & 0 & -1 & X_2 \\
        0 & 0 & 1 & 1 & 0 & -X_1 \\
    \end{bmatrix}.\]

    The minimization yields a minimal free resolution of $V/U$,
    \[ \begin{tikzcd}[ampersand replacement=\&]
        0 \rar \& \tilde F_2 \rar["\tilde \partial_2"] \& \tilde F_1 \rar["\tilde \partial_1"] \& \tilde F_0 \rar["\bar{\tilde \partial}_0"] \& V/U \rar \& 0 \rlap{\text,}
    \end{tikzcd} \]
    with $\rank \tilde{F}_0=2$, $\rank \tilde{F}_1=4$, and $\rank \tilde{F}_2=2$, where
    \begin{gather*}
        \bar{\tilde\partial}_0\coloneqq\begin{bmatrix}
            X_1 & 0 \\
            0 & X_2 \\
            0 & 0 \\
            0 & 0 \\
            0 & 0 \\
            0 & 0 \\
        \end{bmatrix}\text,\quad
        \tilde\partial_1\coloneqq\begin{bmatrix}
            0 & X_2^2 & -X_1 X_2 & X_1^2 \\
            X_2 & 0 & X_1^2 & 0 \\
        \end{bmatrix}\text,\\
        \tilde\partial_2\coloneqq\begin{bmatrix}
            -X_1^2 & 0 \\
            X_1 & -X_1^2 \\
            X_2 & 0 \\
            0 & X_2^2 \\
        \end{bmatrix}\text,
    \end{gather*}
    and $\bar{\tilde\partial}_0$ is the composition of $\tilde \partial_0$ with the canonical projection $V \twoheadrightarrow V/U$.

    Next we consider the homogeneous submodules $U' \subseteq V' \subseteq R^2$, where $U'$ is generated by the columns of the following matrix:
    \[ \begin{bmatrix}
        X_1^3 & X_1^2 X_2 &  0 & X_1 X_2^2 & 0 \\
        0 & -X_1^2X_2 & X_2^2 & 0 & X_1^3X_2
    \end{bmatrix} \text; \]
    and $V'$ is generated by the columns of the matrix:
    \[ \partial_0'=\begin{bmatrix}
        X_1 & 0 \\
        0 & X_2
    \end{bmatrix} \text. \]
    We consider the homomorphism $\varphi\colon V' \to V/U$ defined by
    \begin{align*}
        \varphi(\begin{bmatrix}
            X_1 & 0 & 0 & 0 & 0 & 0
        \end{bmatrix}^T) &= \begin{bmatrix}
            X_1 & 0
        \end{bmatrix}^T \text, \\
        \varphi(\begin{bmatrix}
            0 & X_2 & 0 & 0 & 0 & 0
        \end{bmatrix}^T) &= \begin{bmatrix}
            0 & X_2
        \end{bmatrix}^T \text, \\
        \varphi(\begin{bmatrix}
            0 & 0 & X_1 X_2 & 0 & 0 & 0
        \end{bmatrix}^T) &= \begin{bmatrix}
            X_1 X_2 & 0
        \end{bmatrix}^T \text, \\
        \varphi(\begin{bmatrix}
            0 & 0 & 0 & X_1 X_2 & 0 & 0
        \end{bmatrix}^T) &= \begin{bmatrix}
            0 & X_1 X_2
        \end{bmatrix}^T \text, \\
        \varphi(\begin{bmatrix}
            0 & 0 & 0 & 0 & X_1^2 & 0
        \end{bmatrix}^T) &= \begin{bmatrix}
            X_1^2 & 0
        \end{bmatrix}^T \text, \\
        \varphi(\begin{bmatrix}
            0 & 0 & 0 & 0 & 0 & X_1^2 X_2
        \end{bmatrix}^T) &= \begin{bmatrix}
            0 & X_1^2 X_2
        \end{bmatrix}^T \text.
    \end{align*}
    A simple calculation reveals that $\varphi$ is surjective with kernel given by $U'$.
    Thus, $\varphi$ induces an isomorphism $V'/U' \to V/U$.

    We note that the generating sets of $U'$ and $V'$ are a Gr\"{o}bner basis and a Gr\"{o}bner basis relative to $U$, respectively, e.g. for any position over term ordering $\prec$ with $\mathbf{e}_1\succ \mathbf{e}_2$.
    The $R$-free resolution of $V'/U'$ induced by these Gr\"{o}bner bases,
    \[ \begin{tikzcd}[ampersand replacement=\&]
        0 \rar \& F_2' \rar["\partial_2'"] \& F_1' \rar["\partial_1'"] \& F_0' \rar["\varepsilon'"] \& V'/U' \rar \& 0 \rlap{\text,}
    \end{tikzcd} \]
    is described by the differential matrices
    \[ \partial_1'=\begin{bmatrix}
       X_1^2 & X_1 X_2 & X_2^2 & 0 & 0 \\
       0 & -X_1^2 & 0 & X_2 & X_1^3
    \end{bmatrix} \text, \quad
    \partial_2'=\begin{bmatrix}
        X_2 & 0 & 0 \\
        -X_1 & X_2 & 0 \\
        0 & -X_1 & 0 \\
        0 & X_1^2 & X_1^3 \\
        -1 & 0 & -X_2
    \end{bmatrix}\, \text . \]
    Minimizing the resolution using the non-zero constant entry of $\partial_2'$ gives the following new maps $\tilde\partial_1', \tilde\partial_2'$, where the generating set of $V'$ is not changed:
    \[ \tilde\partial_1'=\begin{bmatrix}
       X_1^2 & X_1 X_2 & X_2^2 & 0 \\
       0 & -X_1^2 & 0 & X_2
    \end{bmatrix} \text, \quad
    \tilde\partial_2'=\begin{bmatrix}
        0 & -X_2^2 \\
        X_2 & X_1 X_2 \\
        -X_1 & 0 \\
        X_1^2 & X_1^3
    \end{bmatrix}\text. \]
    Thus the Betti numbers are $2,4,2$ (which we could of course also have inferred from our computation of the Betti numbers of $V/U$ and the isomorphism $\varphi$).
\end{example}

\subsection{Generator-minimization of free-cofree presentations}%
\label{sec:minimal free cofree presentation from reduced GBs}

In this subsection, we illustrate with an example how $n$-graded pruning can be used in practice to reduce a given free-cofree presentation of $M$ from a given (minimal) free presentation of $M$, and then we propose a deterministic method for generator-minimizing a free-cofree presentation $\varphi\colon F\to E$ by applying pruning on the associated relative Schreyer's presentation of $\im\varphi\subseteq E$, where we express them as $\im\varphi\eqqcolon V/U$ and $E\eqqcolon F'/U$. We can then also cogenerator-minimize the given free-cofree presentation by applying Matlis duality (corresponding to trasnposing the free-cofree matrix) and then repeat the generator-minimization procedure.

\begin{example}%
    \label{ex:flange-6}
    We reconsider the module~$M$ from Example~\ref{ex:flange-1} with the free presentation~$C$ from Example~\ref{ex:flange-5}.
    After pruning the third row of $C$ using the first row of $C$, which corresponds to the fact that the third generator in the flat-injective matrix in Example~\ref{ex:flange-5} is redundant, we obtain the free presentation
    \[ \begin{bmatrix}
        X_2^2 & -X_1X_2 & 0 & -X_1X_2 & X_2^2 & X_1^2 & 0 & X_1^2X_2 \\
        0 & X_1^2 & X_2 & X_1^2 & -X_1X_2 & 0 & X_1^3 & -X_1^3
    \end{bmatrix} \]
    Since this matrix does not have any unit entries, it follows that the flat-injective matrix $\tilde A$ from Example~\ref{ex:flange-1}, which has been used in Example~\ref{ex:flange-5} to compute $\partial_1$, is minimal.
    Successive elimination of the remaining columns of $\partial_1$ yields the equivalent free presentation
    \[ \begin{bmatrix}
        X_2^2 & -X_1X_2 & 0 & X_1^2 \\
        0 & X_1^2 & X_2 & 0
    \end{bmatrix} \text, \]
    which minimally generates the syzygy module of $M$.

    Because we only performed column operations and only eliminated one non-zero column with unit entry, which corresponds to the redundant generator in degree $(1, 1)$ of the free-injective matrix in Example~\ref{ex:flange-5}, we see that indeed $\tilde A$ is generator-minimal.
\end{example}

\begin{proposition}%
    \label{prop:reduced free cofree}
    Given a free-cofree presentation $\varphi\colon F_0\to E_0$ of an $n$-graded module $M$, a generator-minimal free-cofree presentation of $M$ can be constructed using relative Schreyer's presentation $\partial_1:F_1\to F_0$ and $n$-graded pruning.
\end{proposition}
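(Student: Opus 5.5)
We begin with the given free-cofree presentation $\varphi\colon F_0\to E_0$, where $F_0=\bigoplus_{j=1}^t\Sigma^{\beta_j}R$ and $E_0=\bigoplus_{i=1}^s\Sigma^{\alpha_i}E(\Bbbk)$, encoded by a free-injective matrix $A$. After shifting by $\Sigma^\gamma$ as discussed before Definition~\ref{def:matrix-Groebner}, we may assume all degrees lie in $\mathbb N^n$. By Remark~\ref{rem:E as Rd mod U} we identify $(E_0)_{\mathbb N^n}$ with $R^s/U_E$, and $\im\varphi$ with $V_A/U_E$, where $V_A=\langle H_A\rangle+U_E$. We then run Algorithm~\ref{alg:BuchbergerFlange} to obtain a matrix $A'$ in Gröbner form that contains $A$ as a submatrix and satisfies $\im A'=\im A$. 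Next, Algorithm~\ref{alg:FreePresentation} together with Theorem~\ref{thm:free presentation of a free-cofree presentation} yields a relative Schreyer presentation $\partial_1\colon F_1\to F_0'$ of $\im A'$. Here $F_0'\supseteq F_0$ has one basis vector for each column of $A'$, and $\partial_1$ generates $\Syz$ of the columns of $A'$ modulo $U_E$.

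We then apply Lemma~\ref{lem:Constant_terms_minimality_criterion} to this generating set of $V_A/U_E$, with $S$ taken to be the set of columns of $\partial_1$. Let $A_0$ be the constant part of $\partial_1$, and choose an invertible $r\times r$ submatrix with row set $I$. Part~\ref{lem:Constant_terms_minimality_criterion:3} of the lemma shows that the columns of $A'$ indexed outside $I$ minimally generate $\im\varphi\cong M$. To make the choice canonical, we use the POT-based choice of part~\ref{lem:Constant_terms_minimality_criterion:4}. This choice should prefer to eliminate the columns added by Buchberger's algorithm; in any case any valid $I$ suffices.

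Let $A_{\min}$ be the submatrix of $A'$ consisting of the surviving columns, and let $\varphi_{\min}\colon F_{\min}=\bigoplus_{j\notin I}\Sigma^{\beta_j'}R\to E_0$ be the corresponding map. Since each column of $A'$ lies in $\im\varphi$, the map $\varphi_{\min}$ still lands in $E_0$ and has image $\im\varphi\cong M$. Hence $\varphi_{\min}$ is a free-cofree presentation of $M$. It remains to verify generator-minimality, namely that $\ker\varphi_{\min}\subseteq\mathfrak mF_{\min}$. This is exactly the statement that no syzygy of the minimal generating set has a nonzero constant coefficient. It holds because, in the graded local setting, a generating set is minimal if and only if its images form a $\Bbbk$-basis of $M/\mathfrak mM$. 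A unit coefficient in a syzygy would make one generator redundant modulo $\mathfrak m M$, contradicting part~\ref{lem:Constant_terms_minimality_criterion:1}.

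The main obstacle is the bookkeeping in the pruning step. The minimal generating set must consist of actual columns of $A'$, interpreted as honest elements of $E_0$, rather than being replaced by linear combinations. We must also make sure that the pruned matrix, and the syzygy matrix $A^{(r)}$ produced in the proof of Lemma~\ref{lem:Constant_terms_minimality_criterion}, remain valid data for $V_A/U_E$ after each row deletion. Both points follow from the lemma, since only rows, that is generators, are removed and the cogenerator side $E_0$ is untouched. If one also wants to undo the shift $\Sigma^\gamma$, we apply $\Sigma^{-\gamma}$ at the end; this preserves minimality because the shift is an autoequivalence.
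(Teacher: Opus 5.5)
Your proposal is correct and follows essentially the same route as the paper. You bring $\varphi$ to Gröbner form with Algorithm~\ref{alg:BuchbergerFlange}, compute the relative Schreyer presentation with Algorithm~\ref{alg:FreePresentation}, prune with Lemma~\ref{lem:Constant_terms_minimality_criterion}, and restrict $\varphi$ to the surviving generators, which is the paper's $\varphi\circ f$. You are more explicit than the paper on two points it only asserts: you track the enlarged free module $F_0'$ produced by Buchberger's algorithm, and you verify $\ker\varphi_{\min}\subseteq\mathfrak m F_{\min}$ via graded Nakayama.
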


\begin{proof}
    Let $\varphi \colon F_0 \to E_0$ be a free-cofree presentation. Assume that it is in Gröbner form (if it is not already in Gr\"obner form then we can always bring it in that form by relative Buchberger's algorithm). After applying monomialization, express $E_0$ as $R^d/U$ and $\im\varphi$ as $V/U\subseteq R^d/U$. Fix a POT monomial order on $R^d$. Use the relative Schreyer's presentation $\partial_1:F_1\to F_0$, and the associated Gr\"obner basis $H^{(1)}$ of the syzygy module $S\subseteq F_0$, where
    \[
    S\coloneqq\ker\varphi=\ker(\varepsilon \colon F_0 \twoheadrightarrow M)=\mathrm{im}\partial_1
    \]
    is obtained by Algorithm~\ref{alg:FreePresentation}.
    Then, by applying pruning minimization on the Schreyer's presentation of $V/U\subseteq R^d/U$, the resulting free cover
    $F\twoheadrightarrow F_0/S$
    is minimal.
    Consequently, one obtains a generator-minimal free-cofree presentation
    \[
    F \xrightarrow{f} F_0 \xrightarrow{\varphi} E_0
    \]
    of $\operatorname{im}\varphi$, where the graded matrix $f\colon F\to F_0$  is the row-reduction associated to the $n$-graded pruning of the graded matrix $\partial_1:F_1\to F_0$.

    Moreover, if we use the \emph{reduced relative Gr\"obner basis} as input, the resulting Schreyer's presentation—and hence the associated \emph{minimal free-cofree presentation}—is deterministic: it depends only on $\operatorname{im}\varphi$ and $E$, and not on the particular choices of generating sets of $U$ and $V$.
\end{proof}

By applying Proposition \ref{prop:reduced free cofree} and Matlis duality for the cogenerator-minimization, we obtain the following corollary.

\begin{corollary}
\label{cor:_reduced free cofree}
    Given a finitely generated and finitely supported $n$-graded $R$-submodule $M\coloneqq V/U$ of a $n$-graded injective $R$-module $E\coloneqq F/U$, there exists a minimal free-cofree which is unique for $M$ (i.e. independent of the input generators of $V$ and $U$), once a monomial order on $R$ is fixed.
\end{corollary}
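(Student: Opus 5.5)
The plan is to combine three results stated earlier: the relative Buchberger theorem (Theorem~\ref{thm:Uniqueness_And_Correspondence_Via_NF:intro}), the relative Schreyer theorem (Theorem~\ref{thm:KernelOfPresentation:intro}, in the concrete form of Algorithm~\ref{alg:FreePresentation}), and the pruning procedure of Lemma~\ref{lem:Constant_terms_minimality_criterion}\ref{lem:Constant_terms_minimality_criterion:4}. I would first reduce to the $\mathbb N^n$-graded situation, using the shift autoequivalence $\Sigma^\gamma$ discussed before Lemma~\ref{lem:fg-fs-flange}. By Remark~\ref{rem:E as Rd mod U}, and after monomialization, I then regard $E=F/U$ with $F=R^d$ and $U=U_E$ the monomial submodule generated by $G_E$, and $M=V/U\subseteq F/U$. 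Fix the monomial order on $R$ and its POT extension to $R^d$.

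\textbf{Generator side.} By Theorem~\ref{thm:Uniqueness_And_Correspondence_Via_NF:intro}, $V$ has a unique reduced Gröbner basis $H$ relative to $U$. Since $H$ depends only on $V$, $U$ and $\prec$, it is independent of the input generators. This $H$ is exactly a free-injective matrix in reduced Gröbner form in the sense of Definition~\ref{def:matrix-Groebner}. Next I would run Algorithm~\ref{alg:FreePresentation} on $H$, iterating over pairs and over pairs with elements of $G_E$ in the fixed index order. To make the output $\partial_1$ deterministic, I would use a fixed deterministic choice rule inside Algorithm~\ref{alg:RelDivAlgo}, say always the smallest admissible index. By Theorem~\ref{thm:free presentation of a free-cofree presentation}, $\partial_1$ presents $M$. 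Applying the POT-deterministic pruning of Lemma~\ref{lem:Constant_terms_minimality_criterion}\ref{lem:Constant_terms_minimality_criterion:4} then selects a canonical subset $H_{\min}\subseteq H$ that minimally generates $M$. This gives a generator-minimal free-cofree presentation $F_{\min}\to E$, as in Proposition~\ref{prop:reduced free cofree}.

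\textbf{Cogenerator side.} I would pass to the Matlis dual, that is, transpose the free-injective matrix, which yields a free-injective presentation of $M^\vee$ with the roles of generator and cogenerator degrees exchanged. I would apply the same canonical procedure, bringing this matrix to reduced Gröbner form and pruning, and then dualize back. By the Hyry--Puuska characterization quoted after Definition~\ref{def:flange}, the result is cogenerator-minimal, so $E$ is replaced by an injective hull of $M$. I also need that the generator-minimality obtained in the first step survives this step.

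\textbf{Main obstacle.} The step I expect to be hardest is justifying that uniqueness is preserved through the dual step. After dualizing, the new ``$U$'' and ``$V$'' for $M^\vee$ are determined by the matrix from the first step, and that matrix is itself canonical. Hence the second reduced relative Gröbner basis is canonical as well, and each step is a deterministic function of $(V,U,\prec)$. I would try to argue precisely this chain of determinism. I would also check that cogenerator pruning does not destroy generator-minimality: deleting redundant cogenerator rows leaves $\im\varphi$ unchanged up to isomorphism and leaves $\ker\varphi\subseteq\mathfrak mF$ intact. Finally, I would take care that ``unique'' is read as unique given the fixed order and these deterministic conventions, not as unique up to all choices.
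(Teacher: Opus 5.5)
Your proposal is correct and takes the same route as the paper. There the corollary is obtained by applying Proposition~\ref{prop:reduced free cofree} (reduced relative Gröbner basis, relative Schreyer presentation via Algorithm~\ref{alg:FreePresentation}, POT-deterministic pruning) for generator-minimality, and then using Matlis duality, i.e.\ transposing and repeating the procedure, for cogenerator-minimality. Your extra points only make explicit what the paper leaves implicit: a fixed tie-breaking rule in Algorithm~\ref{alg:RelDivAlgo} so that the quotients $a_j$ are deterministic, and the check that generator-minimality survives the dual step. That check goes through even though the new cogenerators come from a reduced Gröbner basis rather than from deleting rows, because exactness of Matlis duality makes the kernel of the re-dualized map equal to $\ker\varphi\subseteq\mathfrak m F$.
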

In fact, even if one is given a finitely generated and finitely presented module $M$, without an explicit realization as a quotient $M = V/U$ with $U \subset V \subset F$, such a representation can be constructed. More precisely, for any finitely generated and finitely presented $n$-graded module $M$ presented as a diagram of vector spaces, one may consider its \emph{associated free--cofree presentation} as introduced in \cite{GrimpenStefanou2025}, which is determined by the action of the indeterminates $X_i$ on $M$. The resulting \emph{minimal free-cofree presentation} depends only on $M$ and the choice of a monomial order on $R$, extended to $F$ via a POT order. This yields a fully deterministic reduction procedure at the level of $R$-modules, allowing one to compute minimal free--cofree presentations via relative Gr\"obner bases directly from diagrammatic data of $n$-graded modules, where the multiplication maps are specified by $\Bbbk$-linear matrices.

\subsection{A linear algebra viewpoint of generator-minimization}%
\label{ssect:gen-min}

In this subsection we describe a general linear algebra method for constructing nested submodules
$U \subseteq V \subseteq R^d$ together with generating sets $G$ and $H$ of $U$ and $V$, respectively, such that $V/U\cong M$ for a given Artinian graded $\Bbbk[X_1,\dotsc,X_n]$-module~$M$.
Moreover, $H$ is a reduced Gröbner basis of $V$ relative to $U$ and is in bijection with a minimal generating set of $M$.
The construction yields, e.g., the modules $V'$ and $U'$ of Example~\ref{ex:Resolution}.

Let $M = \bigoplus_{a \in \mathbb Z^n} M_a$ be a finitely generated and finitely supported $n$-graded $\Bbbk[X_1, \dotsc, X_n]$-module. Thus, $M$ is Artinian and of finite $\Bbbk$-dimension by Remark~\ref{rem:fg-fs-Artinian}. We may assume that $\supp M \subseteq \mathbb N^n$.

\begin{construction}\label{constr:ModuleConstructions}
    For each $\alpha\in\supp(M)$, we decompose $M_\alpha=O_\alpha\oplus N_\alpha$, where $O = \mathfrak m M$ (i.e.\ $O_\alpha=\sum_{i=1}^n X_i\cdot M_{\alpha-\varepsilon_i}$) and $N_\alpha$ is a choice of a complement of $O_\alpha$ in $M_\alpha$.
    For each $\alpha\in \supp(M)$ choose a basis $B_\alpha$ of $N_\alpha$.
    Set $d=\sum_{\alpha\in\supp(M)}\dim N_\alpha$.
    We will now define submodules $U\subseteq V\subseteq R^d$.

    First, note that $\mathcal{B}=\bigcup_{\alpha\in\supp(M)} B_\alpha$ is a minimal generating set of $M$. Consider the associated presentation
    \[
    \psi\colon \bigoplus_{u\in\mathcal{B}}\Sigma^{\mdeg u}R\to M\text ,
    \]
    and let $H'\coloneqq\{\mathbf{e}_u\mid u\in \mathcal{B}\}$ be the standard basis of its domain and $G'$ be the reduced Gröbner basis of $\ker(\psi)$ with respect to any position-over-term ordering. Define
    \[H\coloneqq H'^{\mon}\quad\text{and}\quad G\coloneqq G'^{\mon}\text,\]
    and let $V$ and $U$ be the submodules of $R^d$ generated by $H$ and $G$, respectively. Note that $U=\Syz(\mathcal{B})^{\mon}$.
\end{construction}

It is now straightforward to show that $M\cong V/U$ and that the sets $G$ and $H$ from Construction~\ref{constr:ModuleConstructions} are reduced (relative) Gröbner bases.

\begin{proposition}\label{prop:PropertiesOf ConstructedModules}
    Let $M=\bigoplus_{\alpha\in \mathbb{Z}^n} M_\alpha$ be a finitely supported and finite dimensional $n$-graded $R$-module with $\supp(M)\subseteq \mathbb{N}^n$.
    Consider the modules $U,V\subseteq R^d$, generated by the sets $G$ and $H$, respectively, as in Construction~\ref{constr:ModuleConstructions}.
    Then we have:
    \begin{enumerate}
        \item\label{prop:PropertiesOf ConstructedModules:1} $M\cong V/U$ as $n$-graded modules.
        \item\label{prop:PropertiesOf ConstructedModules:2} $G$ is the reduced Gr\"{o}bner basis of $U$ with respect to a position-over-term module monomial ordering on $R^d$.
        \item\label{prop:PropertiesOf ConstructedModules:3} $H$ is the reduced Gr\"{o}bner basis of $V$ relative to $U$ with respect to the compatible module monomial ordering on $R^d/U$. Moreover, $H$ is in bijection with a minimal generating set of $M$.
    \end{enumerate}
\end{proposition}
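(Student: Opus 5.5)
The plan is to prove the three items in order, working throughout via the monomialization isomorphism $\mu_A\colon F_0\coloneqq\bigoplus_{u\in\mathcal B}\Sigma^{\mdeg u}R \to R^d$, $\mathbf e_u\mapsto X^{\mdeg u}\mathbf e_u$, with $A=(\mdeg u)_{u\in\mathcal B}$. For \ref{prop:PropertiesOf ConstructedModules:1}, note that $\mathcal B$ is a minimal generating set of $M$ by graded Nakayama, since it is a homogeneous lift of a $\Bbbk$-basis of $M/\mathfrak m M$. Hence $\psi$ is surjective and $M\cong F_0/\ker\psi$. Since $\mu_A$ is an injective graded homomorphism of degree $0$, it induces an isomorphism $F_0/\ker\psi\cong \mu_A(F_0)/\mu_A(\ker\psi)$. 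By Lemma~\ref{lem:GeneratingSets_Monomialization}, $\mu_A(F_0)$ is generated by $H=H'^{\mon}$, so it equals $V$, and $\mu_A(\ker\psi)$ is generated by $G=G'^{\mon}$, so it equals $U$. This gives $M\cong V/U$ as $n$-graded modules.

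For \ref{prop:PropertiesOf ConstructedModules:2}, I fix the position-over-term order on $R^d$ whose term part and position order agree with the ones used on $F_0$ to compute $G'$. The key observation is that on each component $\mathbf e_u$, the map $\mu_A$ multiplies every monomial by the fixed monomial $X^{\mdeg u}$, and this preserves the term order (axiom (iii)). Positions are unchanged, so under POT the map $\mu_A$ is strictly order-preserving on $\Mon(F_0)$. Consequently $\lm(\mathbf f^{\mon})=\lm(\mathbf f)^{\mon}$ and $\lc$ is preserved. Divisibility also transfers: for monomials $m,m'$ in the same component $u$, $m\mid m'$ if and only if $X^{\mdeg u}m\mid X^{\mdeg u}m'$. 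Therefore $\lm(U)=\lm(\ker\psi)^{\mon}$, and the set of monomials of $R^d$ lying in $\lm(U)$ inside the image is exactly the image of $\lm(\ker\psi)$. Each of the defining properties of ``reduced Gr\"obner basis'' for $G'$ then transfers to $G$: leading monomials generate, there is no divisibility among leading monomials, leading coefficients equal $1$, and tails avoid $\lm$. The one subtlety I expect here is that $\lm(U)$ is a submodule of $R^d$, not of the image. I would handle it by noting that every monomial appearing in an element of $U$ lies in $\mu_A(F_0)$, so the tail condition only concerns such monomials.

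For \ref{prop:PropertiesOf ConstructedModules:3}, the elements of $H$ are the monomials $X^{\mdeg u}\mathbf e_u$, and $V=\bigoplus_u\langle X^{\mdeg u}\rangle\mathbf e_u$ is monomial with $\lm(V)=V$ and minimal generators $H$. I must check that $H\subseteq V\setminus U$ and that the elements of $H$ are supported outside $\lm(U)$. Both follow once $X^{\mdeg u}\mathbf e_u\notin\lm(U)$, that is, once $\mathbf e_u\notin\lm(\ker\psi)$. This is the main point, and I would argue it via minimality. Since $\ker\psi\subseteq\mathfrak m F_0$ (because $\mathcal B$ is minimal), no element of $\ker\psi$ has a nonzero constant term, and in particular none has leading monomial $\mathbf e_u$. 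Here one must be careful: a leading monomial equal to $\mathbf e_u$ would force a unit coefficient on $\mathbf e_u$, which is impossible in $\mathfrak m F_0$. Given this, $\langle\lm(H)\rangle+\lm(U)=V=\lm(V)$. Minimality holds since the $\lm(\mathbf h)$ lie in distinct components. Reducedness is immediate because each $\mathbf h$ is a monomial with coefficient $1$ and zero tail. Finally, by construction $H$ is in bijection with $\mathcal B$, a minimal generating set of $M$, which finishes the proof.
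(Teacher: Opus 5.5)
Your proof is correct and takes essentially the same route as the paper: you transport everything along the monomialization $\mu_A$, which under a position-over-term order preserves leading monomials, leading coefficients and divisibility within each component. The paper compresses this into the remark that monomialization commutes with forming S-polynomials, and for (3) it transfers the statement from $H'$ relative to $G'$. The one place you go beyond the paper is in (3): you make explicit what it calls obvious, namely that $\mathbf e_u\notin\lm(\ker\psi)$. This genuinely requires $\ker\psi\subseteq\mathfrak m F_0$, i.e.\ the minimality of $\mathcal B$ coming from the choice of the complements $N_\alpha$, and your justification of it is correct.
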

\begin{proof}
    \ref{prop:PropertiesOf ConstructedModules:1}: By the first isomorphism theorem, there is a graded isomorphism
    \[
    M\cong \left(\bigoplus_{u\in \mathcal{B}} \Sigma^{\mdeg u}R\right)/\Syz(\mathcal{B})\eqqcolon F'/U' \text .
    \]
    The claimed statement now follows from the fact that $V=F'^{\mon}$ and $U=U'^{\mon}$.

    \ref{prop:PropertiesOf ConstructedModules:2}: As $G$ is the monomialization of the position-over-term Gröbner basis $G'$ and the monomialization of an $S$-polynomial is the $S$-polynomial of the monomializations, $G$ inherits the reduced Gröbner basis structure from $G'$ with respect to the same position-over-term ordering.

    \ref{prop:PropertiesOf ConstructedModules:3}: That $H$ is a reduced Gröbner basis follows as in (2), noting that the standard basis $H'$ of Construction~\ref{constr:ModuleConstructions} is obviously the reduced Gröbner basis relative to $G'$ with respect to the same position-over-term ordering. The bijection to a minimal generating set of $M$ is given explicitly in Construction~\ref{constr:ModuleConstructions}.
\end{proof}

\section*{Acknowledgments}

Fritz Grimpen thanks Michael Kerber for fruitful discussions on the algorithmic aspects of flat-injective presentations during a research stay in March 2025 at TU Graz.

Matthias Orth was partially supported by the FWO grants G0F5921N (Odysseus) and G023721N, and by the KU Leuven grant iBOF/23/064.

Anastasios Stefanou thanks Ezra Miller, Fatemeh Mohammadi, Bernd Sturmfels, Mahrud Sayrafi, Henry Schenck,  Matías Bender, Fabian Lenzen, Michael Lesnick, and Heather Harrington for discussions on Gr\"obner bases.

\section*{CRediT authorship contribution statement}
\textbf{Fritz Grimpen: }Investigation, Methodology, Validation, Visualization, Writing – original draft, Writing – review \& editing.

\noindent\textbf{Matthias Orth: }Investigation, Methodology, Validation,  Writing – original draft, Writing – review \& editing.

\noindent\textbf{Anastasios Stefanou: }Conceptualization, Methodology, Project Administration, Supervision, Writing – original draft, Writing – review \& editing.

\section*{Declaration of competing interest}
The authors declare that they have no known competing financial interests or personal relationships that could have appeared to influence the work reported in this paper.

\bibliographystyle{plainurl}
\bibliography{main}
\end{document}